\newtheorem{theorem}{Theorem}
\newtheorem{lemma}[theorem]{Lemma} 
\newtheorem{proposition}[theorem] {Proposition}
\theoremstyle{definition}
\newtheorem{example}[theorem]{Example}
\newtheorem*{definition}{Definition}
\newtheorem{remark}[theorem]{Remark}
\DeclareMathOperator{\Fix}{Fix}
\newcommand{\Q}{\mathbb{Q}}
\newcommand{\Z}{\mathbb{Z}}
\newcommand{\R}{\mathbb{R}}
\newcommand{\nilk}[2]{\raisebox{-1.2mm}{\small$#1$\!}\backslash\raisebox{1.1mm}{$\!#2$}}
\newcommand{\lie}{{\mathfrak g}}
\numberwithin{theorem}{section}
\title{\bf Nielsen numbers of affine $n$-valued maps on nilmanifolds}
\author{C.\ Deconinck, \;\;
K.\ Dekimpe\thanks{Research supported by long term structural funding - Methusalem grant of the Flemish Government.}\\
\small {KU Leuven Campus Kulak Kortrijk},
\small {8500 Kortrijk},
\small {Belgium} \\
\small {e-mail: Charlotte.Deconinck@kuleuven.be},\;\;
\small {Karel.Dekimpe@kuleuven.be}}
\begin{document}

\maketitle

\begin{abstract}
A nilmanifold is a quotient $\nilk{N}{G}$ of a connected and simply connected nilpotent Lie group $G$ by a uniform lattice $N$.
In this paper we determine the Reidemeister and Nielsen number of affine $n$-valued maps on such a nilmanifold. These are maps for which a given lifting to $G$ splits into $n$ affine maps of the Lie group $G$. In order to obtain this result we also establish a way of computing the number of generalized twisted conjugacy classes on finitely generated torsion free nilpotent groups.
\end{abstract}

\section{Introduction}
Let $f:X\to X$ be a continuous map on a reasonably nice topological space $X$ (e.g.\ $X$ is a finite polyhedron). Nielsen fixed point theory (see \cite{jm}, \cite{ji}, \cite{j} and \cite{k}) is a part of algebraic topology that provides a framework to determine the value of 
\[ \text{MF}(f)= \min \#\{ {\rm Fix}(g) \mid g \simeq f \},\]
where ${\rm Fix}(g)=\{ x\in X\mid g(x)=x\}$ is the set of fixed points of $g$ and $g\simeq f$ denotes the fact that $g$ is homotopic to $f$. This is done by attaching two numbers to the map $f$. First one divides the set ${\rm Fix}(f)$ into so-called fixed point classes. This is done in such a way that for a map $g$ homotopic to $f$, the fixed point classes of $g$ are in one-to-one correspondence to those of $f$. In this way, the number of fixed point classes, called the Reidemeister number $R(f)$ of $f$, is invariant under homotopy. Note that some of the fixed point classes of $f$ (or of $g$) might be empty. In a second step, one attaches an integer to each fixed point class. This integer is the index of the fixed point class and a fixed point class is called essential if its index is non-zero. The idea is that an essential fixed point class can never become empty when considering homotopies, while a non-essential one can. The Nielsen number $N(f)$ of the map $f$ is then the number of essential fixed point classes of $f$. Also $N(f)$ is homotopy invariant. This Nielsen number is a very good estimate for ${\rm MF}(f)$, indeed, by a result of F.~Wecken we have that when  $X$ is a compact manifold of dimension different from two that $N(f)= {\rm MF}(f)$ for all self-maps of $X$ (see \cite{w}). 

In general $R(f)$ is fairly easy to compute using algebraic tools ($R(f)$ equals the number of twisted conjugacy classes of the induced endomorphism on the fundamental group $\pi(X)$ of $X$, see also Section~\ref{sec: R for groups}), while unfortunately $N(f)$ is very often hard to compute. However, for some classes of manifolds one has been able to develop easy formulas to compute this Nielsen number. This is for instance the case for nilmanifolds. We will give more details on this class of manifolds in Section~\ref{sectie3}, but for now it suffices to say that they are of the form $\nilk{N}{G}$ where $N$ is a uniform lattice of a connected and simply connected nilpotent Lie group $G$. It is known that any map $f$ on such a nilmanifold is homotopic to a map induced by an endomorphism $\varphi:G \to G,$ and D.~Anosov (\cite{a}, see also \cite{fh}) showed that $N(f)= |\det (I -\varphi_\ast)|$. Here $\varphi_\ast: \mathfrak{g}\to \mathfrak{g}$ is the differential of $\varphi$ at the identity element of $G$ (so the induced endomorphism on the Lie algebra $\mathfrak{g}$ of $G$).

\medskip

In the eighties, H.~Schirmer extended Nielsen theory, including Wecken's result, to the setting of $n$-valued maps $f: X \multimap X$ (\cite{s}, \cite{sch}). These are maps that attach to each $x\in X$ a subset $\{x_1,x_2, \ldots, x_n\}$  of $X$ with exactly $n$ elements. Also here we require some kind of continuity (see Section~\ref{sec: fixed point cl}). In \cite{b}, R.F.~Brown showed that any $n$-valued map on a circle is homotopic to an affine $n$-valued map (which is the natural analogue of the single-valued linear maps on nilmanifolds) and he was also able to compute the Nielsen number in this case. 

\medskip

In this paper we will show how to compute the Nielsen number of any affine $n$-valued map on any nilmanifold. In fact the main result of this paper is Theorem~\ref{hoofdformule} which nicely generalizes the formula of Anosov for the single-valued case. We do remark that in contrast to the single-valued case, it is no longer true that any $n$-valued map is homotopic to an affine $n$-valued map, but the class of affine $n$-valued maps is still very large and interesting. 

\medskip

This paper is organized as follows. In Section~\ref{sec: fixed point cl} we describe the approach to fixed point classes of $n$-valued maps as developed in \cite{bdds}. In the next section we recall the basic facts about nilpotent Lie groups and nilmanifolds. Section~\ref{sec: R for groups} is purely group theoretical in nature as we construct a way to count the number of generalized twisted conjugacy classes in the case of finitely generated torsion free nilpotent groups. This is in fact an algebraic way of counting fixed point classes (so for computing $R(f)$) for $n$-valued maps. In Section~\ref{bereken R} we introduce the class of affine $n$-valued maps and then apply the results from the previous section to find an easy formula for computing the Reidemeister number of such an affine $n$-valued map. In the last section finally, we compute the indices of all possible fixed point classes and this leads to the main result of this paper, namely a formula to compute the Nielsen number $N(f)$ of an affine $n$-valued map. 
\section{Fixed point classes of $n$-valued maps} \label{sec: fixed point cl}

Further on, we will make use of the theory developed in \cite{bdds}.
In this section, we recall some results from that paper.
\medskip \\
Let the space $X$ be a connected finite polyhedron.
The configuration space of $n$ ordered points on $X$ is
$$
F_n(X) = \{ (x_1,x_2,\dots,x_n ) \in X^n \; | \; x_i \neq x_j \text{ if } i \neq j \}
$$
and is topologized as a subspace of the product space $X^n.$
The configuration space of $n$ unordered points on $X$ is
$$
D_n(X) = \{ \{ x_1,x_2,\dots,x_n \} \subseteq X \; | \; x_i \neq x_j \text{ if } i \neq j \}
$$
and can be seen as the orbit space of $F_n(X)$ under the free action of the symmetric group $\Sigma_n.$
The quotient map $q: F_n(X) \to D_n(X) $ induces the quotient topology on $D_n(X).$
\medskip \\
For a natural number $n>0,$ an $n$-valued map $f: X \multimap X$ is a set-valued function which is continuous, that is, both lower and upper semi-continuous, and such that $f(x)$ contains exactly $n$ elements for each $x.$
We do not distinguish between the $n-$valued map $f:X\multimap X$ and the corresponding single-valued function $f:X \to D_n(X),$ which is continuous (as proved in \cite{bg}).
A fixed point of $f$ is an element $x$ such that $x \in f(x).$
The set of all fixed points of $f$ is denoted by $\Fix(f).$
\medskip \\
Let $p: \tilde X \to X$ be the universal cover of $X$ with covering group $\pi.$ The orbit configuration space is defined as
\begin{align*}
F_n(\tilde X, \pi)
&= \{ ( \tilde{x}_1, \tilde{x}_2, \ldots, \tilde{x}_n) \in \tilde{X}^n \;|\; p(\tilde{x}_i) \neq p(\tilde{x}_j) \mbox{ if } i \neq j \}
\\ &= \{ ( \tilde{x}_1, \tilde{x}_2, \ldots, \tilde{x}_n) \in \tilde{X}^n  \;|\; \pi \cdot \tilde{x}_i \neq \pi \cdot \tilde{x}_j \mbox{ if } i \neq j \}.
\end{align*}
Xicot\'encatl proved \cite{x} that
\[
p^n: F_n(\tilde X, \pi)  \to D_n(X):( \tilde{x}_1,\tilde{x}_2, \ldots, \tilde{x}_n) \mapsto \{  p(\tilde{x}_1), p(\tilde{x}_2), \ldots, p(\tilde{x}_n) \} 
\]
is a covering map with covering group $\pi^n \rtimes \Sigma_n$ and where the product in this group is given by
\[ (\alpha_1,\alpha_2, \ldots, \alpha_n;\sigma) (\beta_1,\beta_2, \ldots, \beta_n;\rho)=
( \alpha_1 \beta_{\sigma^{-1}(1)}, \alpha_2 \beta_{\sigma^{-1}(2)}, \ldots,\alpha_n \beta_{\sigma^{-1}(n)}; \sigma \circ \rho) .\]
Now we can lift any $n$-valued map $f: X \to D_n(X)$ to a map
$\bar f: \tilde{X} \to F_n(\tilde X, \pi).$
Since $F_n(\tilde X, \pi) \subset \tilde{X}^n, $ the lifting $\bar f$ splits in $n$ single-valued maps: $\bar{f}=(\bar{f}_1,\bar{f}_2,\ldots,\bar{f}_n).$
The maps $\bar{f}_i: \tilde{X} \to \tilde{X}$ are called lift-factors.
\[
\xymatrix@C=2.5cm{ \tilde{X} \ar[d]_p \ar[r]^-{\bar{f}=(\bar{f}_1,\bar{f}_2,\ldots,\bar{f}_n) }& F_n( \tilde{X}, \pi) \ar[d]^{p^n}\\
X\ar[r]_-f & D_n(X) }
\]
Analogously to the single-valued case, where there is an equivalence relation on the set of all liftings,
we define an equivalence relation on the set of all lift-factors of the $n$-valued map $f:$
$$ \bar{f}_i \sim_{lf} \bar{f}_j' \Leftrightarrow \exists \gamma \in \pi :  \bar{f}_i =  \gamma  \bar{f}_j' \gamma^{-1} .$$
The equivalence classes are called lift-factor classes, and the Reidemeister number $R(f)$ of $f$ is defined as the number of lift-factor classes.
If $n$-valued maps are homotopic, then they have the same Reidemeister number.
\medskip \\
Analogously to the single-valued case, where we have a similar property with liftings instead of lift-factors,
the following holds in the $n$-valued case:
\begin{theorem}\label{lcfpc}
Let $f \colon X  \to  
D_n(X)$ be an $n$-valued map
and $\bar f_i, \bar f'_j$
two lift-factors of $f.$
\begin{enumerate}[(a)]
\item If $\bar f_i \sim_{lf} \bar f'_j,$
then $p\Fix(\bar f_i) = p\Fix(\bar f'_j)$.
\item If  
$p\Fix(\bar f_i) \cap 
p\Fix(\bar f'_j) \ne \emptyset$ then $\bar f_i \sim_{lf} \bar f'_j.$
\end{enumerate}
\end{theorem}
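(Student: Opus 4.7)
For part (a), the claim follows by unwinding the definition of $\sim_{lf}$: if $\bar{f}_i = \gamma \bar{f}'_j \gamma^{-1}$ for some $\gamma\in\pi$, then $\tilde x\in\Fix(\bar f_i)$ iff $\gamma^{-1}\tilde x\in\Fix(\bar f'_j)$, and since $\gamma$ is a deck transformation we have $p(\tilde x) = p(\gamma^{-1}\tilde x)$. Passing to $p$-images therefore gives $p\Fix(\bar f_i) = p\Fix(\bar f'_j)$.

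For part (b), the plan is to upgrade a coincidence at a single point to a full equality of lift-factors. Given $x\in p\Fix(\bar f_i)\cap p\Fix(\bar f'_j)$, pick $\tilde x\in\Fix(\bar f_i)$ and $\tilde z\in\Fix(\bar f'_j)$ both lying over $x$, and let $\gamma\in\pi$ be the unique deck transformation with $\tilde x=\gamma\tilde z$. Set $\bar g := \gamma\bar f'_j\gamma^{-1}$. A direct substitution yields $\bar g(\tilde x) = \gamma\bar f'_j(\tilde z) = \gamma\tilde z = \tilde x = \bar f_i(\tilde x)$. Moreover, conjugation by $\gamma$ carries the full lift containing $\bar f'_j$ to another full lift of $f\circ p$ through $p^n$, whose components are the $\gamma$-conjugates of the original ones; in particular $\bar g$ is itself a lift-factor of $f$.

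It then remains to deduce $\bar g = \bar f_i$ from the single-point equality. I would invoke uniqueness of lifts for the covering $p^n\colon F_n(\tilde X,\pi)\to D_n(X)$. Let $\bar f = (\bar f_1,\ldots,\bar f_n)$ be the full lift in which $\bar f_i$ sits at position $i$, and let $\bar h = (\bar h_1,\ldots,\bar h_n)$ be a full lift in which $\bar g$ occupies some position $k$. Both lift $f\circ p$, so they differ by a unique element $(\alpha_1,\ldots,\alpha_n;\sigma)\in\pi^n\rtimes\Sigma_n$ of the deck group, giving $\bar h_k = \alpha_k \bar f_{\sigma^{-1}(k)}$. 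Evaluating at $\tilde x$ produces $\alpha_k\bar f_{\sigma^{-1}(k)}(\tilde x) = \tilde x$; applying $p$ and using the defining condition on $F_n(\tilde X,\pi)$---that the $p$-images of the components of $\bar f(\tilde x)$ are pairwise distinct---pins down $\sigma^{-1}(k) = i$, after which $\alpha_k = e$ follows from $\bar f_i(\tilde x) = \tilde x$. Hence $\bar g = \bar f_i$ globally, so $\bar f_i = \gamma\bar f'_j\gamma^{-1}$, i.e.\ $\bar f_i\sim_{lf}\bar f'_j$.

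The main obstacle is exactly this index-matching step: a naive appeal to \emph{uniqueness of lifts} for maps $\tilde X\to\tilde X$ does not directly apply, because different lift-factors can inhabit different positions of their respective full lifts. Forcing both the position and the translation to be trivial is where the distinctness-of-$p$-images condition intrinsic to $F_n(\tilde X,\pi)$ does the real work, and it is the point where the $n$-valued setting visibly goes beyond the single-valued case.
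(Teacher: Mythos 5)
Your argument is correct: part (a) is the routine computation, and in part (b) the decisive step --- using the defining condition of $F_n(\tilde X,\pi)$ that the $p$-images of the components of a lift are pairwise distinct to force $\sigma^{-1}(k)=i$, and then freeness of the $\pi$-action to force $\alpha_k=e$ --- is exactly what makes the $n$-valued case work. The paper only recalls this theorem from \cite{bdds} without reproving it, and your proof coincides with the standard argument given there, so there is nothing to add.
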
 Hence, we can write the fixed point set
$
\Fix(f) = \bigcup_{\bar{f}_i} p \Fix( \bar{f}_i)
$ as the disjoint union
\[ \Fix(f) = \bigcup_{[\bar{f}_i]} p \Fix( \bar{f}_i), \]
where the union is taken over all lift-factor classes.
The sets $p \Fix( \bar{f}_i)$ are called fixed point classes of $f.$ It follows that the Reidemeister number, which is defined as the number of lift-factor classes, also equals the number of fixed point classes. Fixed point classes of $n$-valued maps were already defined by Schirmer in
1984, without the use of covering spaces (\cite{s}).
The preceding definition of fixed point classes agrees with the one from Schirmer in the case we are considering non empty fixed point classes.
Also the notions of index and Nielsen number were generalized by Schirmer to the $n-$valued setting. The Nielsen number $N(f)$ of $f$ is defined as the number of fixed point classes of non-zero index, so $N(f) \leq R(f).$
\medskip \\
We fix a lifting $\bar{f}^*=(\bar{f}^*_1,\bar{f}^*_2,\ldots,\bar{f}^*_n): \tilde{X} \to F_n( \tilde{X}, \pi) $ of $f,$ which we call the basic lifting. It holds that every lift-factor of $f$ can be written as $\alpha \bar{f}^*_i$ for an $\alpha \in \pi$ and $i \in \{ 1,2, \dots, n\}.$
The basic lifting of $f$ defines a function
$\psi_f:\pi \to \pi^n \rtimes \Sigma_n$
by the requirement that
\[ \forall \gamma\in \pi: \; \bar{f}^\ast \circ \gamma = \psi_f(\gamma) \circ \bar{f}^\ast .\]
We denote $\psi_f(\gamma)=(\phi_1(\gamma),\phi_2(\gamma), \ldots, \phi_n(\gamma); \sigma_\gamma)$ and it turns out that $\psi_f$ is a homomorphism.
The map $\sigma: \pi \to \Sigma_n: \gamma \mapsto \sigma_\gamma$ is also a homomorphism, but the maps $\phi_i: \pi \to \pi$ are not homomorphisms in general. Using the product structure of $\pi^n \rtimes \Sigma_n$, one finds that the maps $\phi_i$ satisfy the relation
\begin{equation}\label{f* na gamma}
    \bar{f}_i^\ast \circ \gamma = \phi_i(\gamma) \circ \bar{f}_{\sigma_\gamma^{-1}(i)}^\ast.
\end{equation}

We can translate
the equivalence relation $\sim_{lf}$ on the set of lift-factors to an equivalence relation on $\pi\times \{1,2,\ldots,n\}:$
\begin{align*}
\alpha \bar f^*_i \sim \beta \bar f^*_j
& \iff \exists \gamma \in \pi:
\left\{
\begin{array}{ll}
\sigma_\gamma (j) = i
\\
\alpha
= \gamma \beta \phi_j(\gamma^{-1}).
\end{array}
\right.
\end{align*}
Because the map $\sigma: \pi \to \Sigma_n: \gamma \mapsto \sigma_\gamma$ is a homomorphism, the relation
$$
i \sim j \iff \exists \gamma \in \pi: \sigma_\gamma(j) = i
$$
on $\{1,2,\dots,n\}$ is an equivalence relation. It divides the lift-factors of the basic lifting into equivalence classes, where $\bar{f}^*_i$ and $\bar{f}^*_j$ are in the same class if $i \sim j.$ Those classes are called the $\sigma-$classes of the basic lifting.
Let
$\bar{f}^*_{i_1},\bar{f}^*_{i_2},\dots,\bar{f}^*_{i_r}$
be representatives for the $\sigma-$classes.
It holds that
\begin{equation}\label{Som Reidemeister Getallen}
R(f) = R_{i_1}(f) + R_{i_2}(f) + \cdots + R_{i_r}(f)
\end{equation}
where $R_{i_k}(f)$ is the number of equivalence classes of the equivalence relation on $\pi$ given by
$$
\alpha\sim_{i_k} \beta \Leftrightarrow \exists \gamma \in S_{i_k}: \alpha = \gamma \beta \phi_{i_k} (\gamma^{-1}),
$$
with
$S_{i_k}=\{ \gamma\in \pi\;|\; \sigma_\gamma(i_k) =i_k \}.$
Even though the map $\phi_{i_k}: \pi \to \pi$
is not a homomorphism in general,
it is a homomorphism if we restrict the domain to $S_{i_k}$ (which is a finite index subgroup of $\pi$).
Denoting the equivalence class of $\gamma \in \pi$ by the relation $\sim_{i_k}$ with $[\gamma]_{i_k},$ we have
$$
\Fix(f) = \bigcup_{k=1}^r \bigcup_{[\gamma]_{i_k}} p \Fix( \gamma\bar{f}^*_{i_k}),
$$
where the last union is taken over all classes $[\gamma]_{i_k}$ with $\gamma \in \pi.$
The union is a disjoint union and some sets may be empty.

\section{Nilpotent groups, Malcev completions and nilmanifolds}\label{sectie3}
In this section we will recall some basic facts about finitely generated torsion free nilpotent groups and their Malcev completions.
For more details we refer the reader to \cite{c}, \cite{de}, \cite{m} and \cite{se}.

\medskip

For a group $N$, we will use $\gamma_i(N)$ ($i=1,2,\ldots$) to denote the terms of the lower central series of $N$. These terms are defined recursively via $\gamma_1(N)=N$ and $\gamma_{i+1}(N)=[N,\gamma_{i}(N)]$. Recall that a group is said to be nilpotent if there is a positive integer $c$ such that $\gamma_{c+1}(N)=1$. The smallest possible $c$ for which $\gamma_{c+1}(N)=1$ is called the nilpotency class of $N.$

\medskip

In this paper, we will be working with finitely generated torsion free nilpotent groups. Given such a group $N$, there exists a unique connected and simply connected nilpotent Lie group $G$ containing $N$ as a uniform lattice (i.e.\ as a discrete and cocompact subgroup). This Lie group $G$ is called the Malcev completion of $N$. Conversely, if $N$ is a uniform lattice of a connected and simply connected nilpotent Lie group $G$, then $N$ is finitely generated and torsion free. The spaces we are considering in this paper are the nilmanifolds. These are exactly the quotient spaces
$\nilk{N}{G}$ where $N$ is a uniform lattice of a connected and simply connected nilpotent Lie group $G$. Hence there is a one-to-one correspondence between nilmanifolds and finitely generated torsion free nilpotent groups.

\medskip

Now, let $N_1$ and $N_2$ be two such finitely generated torsion free nilpotent groups with Malcev completions $G_1$ and $G_2$ respectively. Then any morphism $\varphi:N_1 \to N_2$ extends uniquely to a Lie group morphism 
$\tilde{\varphi}: G_1 \to G_2$. Moreover, when $\varphi$ is an isomorphism (or injective or surjective), then so is $\tilde{\varphi}$.

\medskip 

Let $\lie$ denote the Lie algebra of a connected and simply connected nilpotent Lie group $G$. It is well known that the exponential map $\exp:\lie \to G$ is bijective in this case and we will denote the inverse by $\log:G \to \lie$. 
When $\varphi:G\to G$ is an endomorphism of the nilpotent Lie group $G$, we will use $\varphi_\ast:\lie \to \lie $
to denote the differential of $\varphi$ and we have a commutative diagram:
\[ 
\xymatrix{ G \ar[r]^\varphi\ar@<2pt>[d]^\log & G\ar@<2pt>[d]^\log\\
\lie\ar[r]_{\varphi_\ast}\ar@<2pt>[u]^\exp & \lie\ar@<2pt>[u]^\exp }
\]
In this paper we will need a more concrete description of the Malcev completion associated to a given finitely generated nilpotent group $N$. For our purposes it turns out that it is useful to work with the isolators of the terms of the lower central series of $N$. 
\begin{definition}
Let $H$ be a subgroup of a group $K$. By the isolator of $H$ in $K$, we will mean the following subset of $K$:
\[ \sqrt[K]{H}=\{ x \in K \mid \exists m >0:\; x^k \in H\}.\]
\end{definition}
When $N$ is a finitely generated torsion free nilpotent group of class $c$, we will use $N_i=\sqrt[N]{\gamma_i(N)}$ to denote the isolator of the $i$-th term of the lower central series of $N$. The $N_i$ are fully characteristic subgroups of $N$ and they form a central series 
\[1 = N_{c+1} \leq N_c \leq N_{c-1} \leq \cdots \leq N_2 \leq N_1 = N\]
in which each quotient $N_i/N_{i+1}$ is torsion free, in fact free abelian of finite rank, say $k_i$.
Note that $N_i$ is the smallest subgroup of $N$, containing $\gamma_i(N)$ and such that $N/N_i$ is torsion free.
In fact $N_i$ is the inverse image of the torsion group of $N/\gamma_i(N)$ under the natural projection $N\to N/\gamma_i(N)$. So $[N_i:\gamma_i(N)]<\infty$. When $G$ is the Malcev completion of $N$, then $N_i=\gamma_i(G) \cap N$
(because of Lemma 1.2.6 in \cite{d}).

\medskip

Now, choose elements $a_{i,j} \in N_i$ ($1 \leq i \leq c, 1 \leq j \leq k_i $) in such a way that the
$
\overline{ a }_{i,1} = a_{i,1} N_{i+1},\overline{ a }_{i,2} = a_{i,2} N_{i+1}, \dots, \overline{ a }_{i,k_i} = a_{i,k_i} N_{i+1}
$
form a free generating set of the free abelian group $N_i/N_{i+1} \cong \Z^{k_i}.$

\medskip

The elements $a_{1,1}, a_{1,2}, \dots, a_{c,k_c}$ form a so called Malcev basis of $N$ and we will say that this Malcev basis is compatible with the isolators of the lower central series. Any element $x\in N$ can now be written uniquely in the form
\begin{align}\label{x long notation}
    x = a_{1,1}^{z_{1,1}} a_{1,2}^{z_{1,2}} \cdots a_{1,k_1}^{z_{1,k_1}} a_{2,1}^{z_{2,1}} \cdots a_{c,k_c}^{z_{c,k_c}}
\end{align}
for some integers $z_{i,j} \in \Z.$
To shorten notation we will be using tuples
$
{z}_1 = (z_{1,1},z_{1,2}, \dots, z_{1,k_1}),
{z}_2 = (z_{2,1},z_{2,2}, \dots, z_{2,k_2}),
\dots,
{z}_c = (z_{c,1},z_{c,2}, \dots, z_{c,k_c})
$
and write
\begin{align}
    x = {a}_1^{{z}_1} {a}_2^{{z}_2} \cdots {a}_c^{{z}_c}
\end{align}
instead of (\ref{x long notation}) above.

\medskip

The following properties are well known.

\begin{theorem}
Let $N$ be a finitely generated torsion free nilpotent group and let $a_{1,1}, a_{1,2}, \dots, a_{c,k_c}$ be a Malcev basis of $N$ which is compatible with the isolators of the lower central series. Then
\begin{enumerate}
    \item $N_i = \text{grp}\{ a_{i,1}, a_{i,2},\dots, a_{c,k_c}\}$.
    \item There exist polynomial functions
    $p_i: \Q^{k_1+k_2+\cdots + k_{i-1}} \times \Q^{k_1+k_2+\cdots + k_{i-1}} \to \Q^{k_i}$ $(2\leq i \leq c)$ such that
    $\forall {x}_1,{y}_1 \in \Z^{k_1},$
    $\forall {x}_2,{y}_2 \in \Z^{k_2},$
    \dots,
    $\forall {x}_c,{y}_c \in \Z^{k_c}:$
    \begin{align*}
        &{a}_1^{{x}_1} {a}_2^{{x}_2} \cdots {a}_c^{{x}_c}
        {a}_1^{{y}_1} {a}_2^{{y}_2} \cdots {a}_c^{{y}_c}
        \\
        &=
        {a}_1^{{x}_1 + {y}_1}
        {a}_2^{{x}_2 + {y}_2 + p_2({x}_1 , {y}_1)}
        \cdots
        {a}_i^{{x}_i + {y}_i + p_{i}({x}_1 ,\dots,{x}_{i-1} , {y}_1, \dots, {y}_{i-1} )}
        \cdots
        {a}_c^{{x}_c + {y}_c + p_{c}({x}_1 , \dots,{x}_{c-1} ,{y}_1, \dots, {y}_{c-1} )}.
    \end{align*}
    \item There exist polynomial functions
    $q_i: \Q \times \Q^{k_1+k_2+\cdots + k_{i-1}} \to \Q^{k_i}$ $(2\leq i \leq c)$ such that
    $\forall {z}_1 \in \Z^{k_1},$
    $\forall {z}_2 \in \Z^{k_2},$
    \dots,
    $\forall {z}_c \in \Z^{k_c},$
    $\forall m \in \Z:$
    \begin{align*}
        ({a}_1^{{z}_1} {a}_2^{{z}_2} \cdots {a}_c^{{z}_c})^m
        =
        {a}_1^{ m {z}_1} {a}_2^{m {z}_2 + q_2(m,{z}_1)} \cdots {a}_c^{ m {z}_c + q_c(m,{z}_1,\dots,{z}_{c-1})}.
        \end{align*}
\end{enumerate}

\end{theorem}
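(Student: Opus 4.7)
The plan is to prove the three parts in turn: (1) by downward induction on $i$, (2) via the Malcev completion and the Baker--Campbell--Hausdorff formula, and (3) as a corollary of (2) by induction on $m$.

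For (1), I proceed by downward induction on $i\in\{c+1,c,\ldots,1\}$. The base case $N_{c+1}=1$ is trivial. Assuming $N_{i+1}=\text{grp}\{a_{i+1,1},\ldots,a_{c,k_c}\}$, the defining property of a compatible Malcev basis is that $\bar a_{i,1},\ldots,\bar a_{i,k_i}$ freely generate $N_i/N_{i+1}$. So every element of $N_i$ can be written as a product of powers of the $a_{i,j}$ times an element of $N_{i+1}$, whence $N_i=\text{grp}\{a_{i,1},\ldots,a_{i,k_i}\}\cup N_{i+1}$, and the inductive hypothesis closes the argument.

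For (2), I would use the Malcev completion $G$ of $N$ and its Lie algebra $\lie$. Since $G$ is connected, simply connected and nilpotent, $\exp:\lie\to G$ is a diffeomorphism and the Baker--Campbell--Hausdorff series truncates to a polynomial map $\lie\times\lie\to\lie$. Choose a basis $A_{i,j}$ of $\lie$ obtained from the Malcev basis $a_{i,j}$ so that each coordinate tuple $(z_{1,1},\ldots,z_{c,k_c})\in\R^{k_1+\cdots+k_c}$ used in the statement corresponds to a polynomial coordinate chart on $G$. The structure constants in this basis are rational because the filtration $\lie\supseteq\lie_2\supseteq\cdots\supseteq\lie_{c+1}=0$ corresponding to $N\supseteq N_2\supseteq\cdots$ has each $\lie_i/\lie_{i+1}$ defined over $\Q$ with basis $A_{i,1},\ldots,A_{i,k_i}$. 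Hence multiplication in $G$ expressed in these coordinates is polynomial over $\Q$. The compatibility of the basis with the lower central series is precisely what forces $p_i$ to depend only on the earlier blocks $x_1,\ldots,x_{i-1},y_1,\ldots,y_{i-1}$: modulo $N_{i+1}$ the product becomes component-wise addition on the free abelian group $N_i/N_{i+1}$, so no contribution from $x_i,y_i$ (or later blocks) can appear in the $i$-th coordinate of the product. Restricting to integer coordinates gives the formula on $N$.

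For (3), I would induct on $m\ge 0$, using (2) to write $x^{m+1}=x^m\cdot x$ and tracking that the BCH expansion of $m$-fold products introduces only binomial coefficients $\binom{m}{k}$, which are polynomial in $m$. The case $m<0$ follows because inversion in $G$ is polynomial in Malcev coordinates (again from BCH applied to $\log(x^{-1})=-\log x$ after expressing the basis coordinates polynomially in the $\log$ coordinates). The main obstacle is the explicit verification that the polynomial dependence on $m$ in $q_i$ has the advertised block-triangular form and that the change of coordinates between exponential coordinates on $\lie$ and the group-theoretic power coordinates $(z_{i,j})$ used in the statement is itself polynomial with rational coefficients. These points are classical and are at the heart of Malcev's theorem on the polynomial structure of finitely generated torsion free nilpotent groups, so I would cite the references \cite{c}, \cite{de}, \cite{m}, \cite{se} rather than redo the calculation.
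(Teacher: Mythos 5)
The paper gives no proof of this theorem at all: it is stated as ``well known'' and the reader is referred to \cite{c}, \cite{de}, \cite{m} and \cite{se} at the start of Section~3, which is also exactly where your sketch ultimately defers the computational core (the truncated Baker--Campbell--Hausdorff formula, the rationality of the structure constants on $\log N$, and the polynomiality of the passage between exponential coordinates and the power coordinates $(z_{i,j})$). Your outline is correct and consistent with that treatment --- in particular, the observation that $N_i/N_{i+1}$ is central in $N/N_{i+1}$ is precisely what forces $p_i$ and $q_i$ to depend only on the earlier coordinate blocks --- so the proposal is sound.
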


Using the polynomials $p_i,$ we can define a group structure on $\R^{k_1 + k_2 + \cdots + k_c}$ by defining the product as
\begin{eqnarray*}
    \lefteqn{({x}_1,{x}_2,\dots,{x}_c)({y}_1,{y}_2,\dots,{y}_c)} \\
   & = &
    \big({x}_1+{y}_1, {x}_2+{y}_2+p_2({x}_1,{y}_1),\dots,{x}_c+{y}_c+p_c({x}_1,\dots, {x}_{c-1},{y}_1,\dots, {y}_{c-1})\big). 
\end{eqnarray*}
In this way we obtain a Lie group $G$ which is connected and simply connected and nilpotent.
The group $N$ is a uniform lattice of the group $G$ by identifying
${a}_1^{{z}_1} {a}_2^{{z}_2} \cdots {a}_c^{{z}_c}\in N$ with $({z}_1,{z}_2,\dots,{z}_c)\in G.$
So $G$ is the Malcev completion of $N.$ By abuse of notation we will use $ {a}_1^{{x}_1} {a}_2^{{x}_2} \cdots {a}_c^{{x}_c}$ to denote the element $({x}_1, {x}_2, \dots, {x}_c )$ of $G$.

Let $\mathfrak{g}$ be the Lie algebra of $G.$ If we let $A_{i,j} = \log(a_{i,j})$ ($1 \leq i \leq c, 1 \leq j \leq k_i$) then the set $ A_{1,1},A_{1,2}, \dots, A_{1,k_1},A_{2,1}, \dots, A_{c,k_c}$ is a basis of $\mathfrak{g}$ (as a vector space). Analogously as above, we will use ${x}_i {A}_i$ as a shorthand for 
$x_{i,1} A_{i,1} + x_{i,2} A_{i,2} + \cdots + x_{i,k_i} A_{i,k_i}$.

The lower central series of $\mathfrak{g}$ is defined analogously as for groups, i.e. $\gamma_1(\mathfrak{g}) = \mathfrak{g}$ and $\gamma_{i+1} (\mathfrak{g}) = [\mathfrak{g}, \gamma_i(\mathfrak{g})].$ It holds that $\exp(\gamma_i(\mathfrak{g})) = \gamma_i(G).$
A morphism $\varphi: G \to G$ induces a morphism on $\gamma_i(G)/\gamma_{i+1}(G) \cong \R^{k_i},$ where the isomorphism $\gamma_i(G)/\gamma_{i+1}(G) \to \R^{k_i}$ is given by $ a_{i,j} \gamma_{i+1}(G) \mapsto (0,\dots, 0, 1,0,\dots,0)^T $ with the ``1'' on the $j^{th}$ place. With $M_i$ we denote the matrix in $\R^{ k_i \times k_i}$ of that induced morphism.
Analogously, the differential $\varphi_\ast:\lie \to \lie $ of $\varphi$ induces a morphism on 
$\gamma_i(\lie)/\gamma_{i+1}(\lie) \cong \R^{k_i},$ where the isomorphism $\gamma_i(\lie)/\gamma_{i+1}(\lie) \to \R^{k_i}$ is given by $ A_{i,j} + \gamma_{i+1}(\lie) \mapsto (0,\dots, 0, 1,0,\dots,0)^T $ with the ``1'' on the $j^{th}$ place. It turns out that the matrix of that induced morphism is also equal to $M_i.$
\begin{theorem}\label{veel veeltermen}
Using the notations introduced above, we have the following properties.
\begin{enumerate}
    \item $\gamma_i(\mathfrak{g}) = \text{vct} \{ A_{i,1}, A_{i,2},\dots, A_{c,k_c}\}$.
    \item There exist polynomial functions
    $r_i: \R^{k_1+k_2+\cdots + k_{i-1}} \to \R^{k_i}$ $(2\leq i \leq c)$ such that
    $\forall {x}_1\in \R^{k_1},$
    $\forall {x}_2 \in \R^{k_2},$
    \dots,
    $\forall {x}_c \in \R^{k_c}:$
    \begin{align*}
        \exp( {x}_1 {A}_1 + {x}_2 {A}_2 + \cdots + {x}_c {A}_c) = {a}_1^{{x}_1} {a}_2^{{x}_2 + r_2({x}_1)} \cdots {a}_c^{{x}_c + r_c( {x}_1,{x}_2, \dots, {x}_{c-1} )}.
    \end{align*}
    \item There exist polynomial functions
    $s_i: \R^{k_1+k_2+\cdots + k_{i-1}}  \to \R^{k_i}$ $(2\leq i \leq c)$ such that
    $\forall {x}_1\in \R^{k_1},$
    $\forall {x}_2 \in \R^{k_2},$
    \dots,
    $\forall {x}_c \in \R^{k_c}:$
    \begin{align*}
        \log( {a}_1^{{x}_1} {a}_2^{{x}_2 } \cdots {a}_c^{{x}_c } ) = {x}_1 {A}_1 + \big({x}_2 + s_2({x}_1) \big){A}_2 + \dots + \big({x}_c + s_c({x}_1, {x}_2, \dots, {x}_{c-1} ) \big){A}_c.
    \end{align*} 
    \item If $\varphi: G \to G$ is a morphism, then
    there exist polynomial functions
    $P_i: \R^{k_1+k_2+\cdots + k_{i-1}}  \to \R^{k_i}$ and linear functions $P'_i: \R^{k_1+k_2+\cdots + k_{i-1}}  \to \R^{k_i}$ $(2\leq i \leq c)$ such that
    $\forall {x}_1\in \R^{k_1},$
    $\forall {x}_2 \in \R^{k_2},$
    \dots,
    $\forall {x}_c \in \R^{k_c}:$ 
    \begin{align*}
        \varphi (  {a}_1^{{x}_1} {a}_2^{{x}_2} \cdots {a}_c^{{x}_c}  ) 
        =
        {a}_1^{ M_1 {x}_1}
        {a}_2^{ M_2 {x}_2 + P_2({x}_1)}
         \cdots 
        {a}_c^{ M_c {x}_c + P_c({x}_1,{x}_2, \dots , {x}_{c-1})}
    \end{align*}
    and
    \begin{align*}
        \varphi_\ast (  {x}_1 {A}_1 & + {x}_2 {A}_2 + \cdots + {x}_c {A}_c  ) 
        \\
        &=
        M_1 {x}_1 {A}_1 + \big( M_2 {x}_2 +P'_2({x}_1) \big) {A}_2 + \cdots + \big( M_c {x}_c + P'_c( {x}_1,{x}_2,\dots, {x}_{c-1} ) \big) {A}_c.
    \end{align*}   
    
\end{enumerate}
\end{theorem}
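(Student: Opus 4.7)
The overall plan is to exploit the lower central filtration together with the fact that, in a nilpotent Lie algebra of class $c$, the Baker--Campbell--Hausdorff series terminates and hence defines a polynomial map; all four parts then follow by reducing computations to individual levels of that filtration.

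For part (1), I would argue by descending induction on $i$, starting from the vacuous case $\gamma_{c+1}(\lie)=0$. For the inductive step, $\exp$ restricts to a diffeomorphism $\gamma_i(\lie)\to\gamma_i(G)$ and therefore induces an $\R$-linear isomorphism $\gamma_i(\lie)/\gamma_{i+1}(\lie) \xrightarrow{\sim} \gamma_i(G)/\gamma_{i+1}(G)$ sending $A_{i,j}+\gamma_{i+1}(\lie)$ to $a_{i,j}\gamma_{i+1}(G)$. Since the classes $\overline{a}_{i,j}$ ($1\le j\le k_i$) form an $\R$-basis of $\gamma_i(G)/\gamma_{i+1}(G)$, the $A_{i,j}$ span $\gamma_i(\lie)/\gamma_{i+1}(\lie)$; combined with the inductive hypothesis for $\gamma_{i+1}(\lie)$, this yields (1).

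For parts (2) and (3), I would induct on the nilpotency class $c$. The base case $c=1$ is trivial since $G$ is abelian. For the inductive step I would pass to the quotient $G/\gamma_c(G)$, which has class $c-1$: the induction hypothesis provides polynomials $r_2,\dots,r_{c-1}$ realising the formula modulo $\gamma_c(G)$, and lifting back to $G$ produces
\[
\exp(x_1 A_1 + \cdots + x_{c-1} A_{c-1}) = a_1^{x_1} a_2^{x_2+r_2(x_1)} \cdots a_{c-1}^{x_{c-1}+r_{c-1}(x_1,\dots,x_{c-2})}\cdot a_c^{f(x_1,\dots,x_{c-1})}
\]
for some function $f$, which must be polynomial because the group law on $G$ is polynomial (item 2 of the previous theorem). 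Because $x_c A_c\in\gamma_c(\lie)$ is central (its bracket with any element of $\lie$ lies in $\gamma_{c+1}(\lie)=0$), we have $\exp(x_c A_c)=a_c^{x_c}$ and it commutes with everything, so multiplying shows that $f$ is the sought polynomial $r_c$. Formula (3) then follows by formally inverting the triangular polynomial change of variables from (2): since each diagonal block is the identity, the inverse is again triangular and polynomial, determined recursively by $s_i(x_1,\dots,x_{i-1})=-r_i\bigl(x_1,\,x_2+s_2(x_1),\,\dots,\,x_{i-1}+s_{i-1}(x_1,\dots,x_{i-2})\bigr)$.

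For (4), since $\varphi$ is a group morphism it preserves each $\gamma_i(G)$ and induces $M_i$ on $\gamma_i(G)/\gamma_{i+1}(G)$, so modulo $\gamma_{i+1}(G)$ one has $\varphi(a_i^{x_i})\equiv a_i^{M_i x_i}$. Combining this with the polynomial group law through an induction on $i$ produces the polynomial corrections $P_i(x_1,\dots,x_{i-1})$. For $\varphi_\ast$, linearity is automatic (it is a Lie algebra morphism), and the filtration inclusion $\varphi_\ast(\gamma_i(\lie))\subseteq\gamma_i(\lie)$ follows from (1) via the commutative $\exp$--$\log$ diagram; this forces the matrix of $\varphi_\ast$ in the basis $\{A_{i,j}\}$ to be block lower triangular, with $i$-th diagonal block necessarily equal to $M_i$ because the induced map on $\gamma_i(\lie)/\gamma_{i+1}(\lie)$ matches the one on $\gamma_i(G)/\gamma_{i+1}(G)$ under the isomorphism from (1). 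Consequently each $P'_i$ is linear and depends only on $x_1,\dots,x_{i-1}$. The main obstacle will be the inductive step in (2): one must identify the residual element in $\gamma_c(G)$ after pulling out the class-$(c-1)$ factors as an explicitly polynomial function of $(x_1,\dots,x_{c-1})$; once (2) is in hand, (3) is formal inversion and (4) is mostly bookkeeping.
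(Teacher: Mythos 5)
The paper itself gives no proof of this theorem: it is presented as a recollection of well-known facts about Malcev coordinates, with the reader referred to the sources cited at the start of Section~\ref{sectie3}. So there is no internal argument to compare against, and your sketch has to be judged on its own. Its architecture is the standard one and parts (1), (3) and (4) are essentially complete as outlines: (1) by descending induction using that $\exp$ induces $\R$-linear isomorphisms $\gamma_i(\mathfrak{g})/\gamma_{i+1}(\mathfrak{g})\to\gamma_i(G)/\gamma_{i+1}(G)$ carrying $A_{i,j}$ to $\bar a_{i,j}$, (3) by inverting the unitriangular polynomial change of coordinates from (2) (your recursion for $s_i$ is the right one), and (4) from the block-triangularity forced by $\varphi(\gamma_i(G))\subseteq\gamma_i(G)$ together with the identification of the diagonal blocks with the $M_i$.

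The one genuine soft spot is the step you yourself flag as the main obstacle: in the inductive step of (2), the claim that the residual $\gamma_c(G)$-coordinate $f(x_1,\dots,x_{c-1})$ is polynomial ``because the group law on $G$ is polynomial'' is not a valid justification. The polynomiality of the $p_i$'s governs products of elements already expressed in Malcev coordinates; it says nothing a priori about the map $\exp$, and $f$ is by definition the top coordinate of $\exp(x_1A_1+\cdots+x_{c-1}A_{c-1})$ --- precisely the quantity whose polynomiality part (2) is asserting, so the argument as written is circular at the top level. What closes the gap is the tool you announce in your preamble but never actually deploy here: the truncated Baker--Campbell--Hausdorff formula. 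Writing $\exp(X+Y)=\exp(X)\exp(Y)\exp(Z)$ with $Z$ a Lie polynomial in $X,Y$ lying strictly deeper in the lower central filtration, one peels off the factors $\exp(x_{i,j}A_{i,j})=a_{i,j}^{x_{i,j}}$ and concludes by induction on the filtration level that $\exp$ (equivalently $\log$, which makes it slightly more natural to prove (3) first and deduce (2) by inversion) is polynomial in Malcev coordinates; this also uses that the one-parameter subgroups $t\mapsto a_{i,j}^{t}$ are given by the real-polynomial extensions of the integral power maps $q_i$ of the preceding theorem. With that lemma in place, the rest of your argument --- including the bookkeeping in (4), where real exponents reappear in $\varphi(a_i^{x_i})$ --- goes through.
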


\section{Generalized twisted conjugacy for nilpotent groups}
\label{sec: R for groups}

For single-valued self-maps $f$, the Reidemeister number $R(f)$ can be computed in an algebraic way by counting the number of twisted conjugacy classes, denoted by $R(f_\ast)$,  for the morphism $f_\ast$ induced on the fundamental group of the space. In the setting of $n$-valued maps, we need a generalization of this concept. 

Let $N$ be a group (which in our case will always be a finitely generated, torsion free nilpotent group). 
Suppose that $H \leq_f N$ is a finite index subgroup of $N$ and that we have a morphism $\phi: H \to N$. Then $\phi$ determines an equivalence relation $\sim_\phi$ on $N$ by setting
\[ \forall \alpha, \beta \in N: \; \alpha \sim_\phi \beta \iff \exists \gamma \in H : \alpha = \gamma \beta \phi(\gamma^{-1}).  \]

Note that when $H=G$, this is the usual twisted conjugacy relation (see e.g. \cite{dp}, \cite{fgw}, \cite{gw}, \cite{ms} and \cite{sen}), so we will refer to this relation as a generalized twisted conjugacy relation. 

We will use $R(\phi)$ to denote the number of equivalence classes of the generalized twisted conjugacy relation $\sim_\phi$ and refer to this number as the Reidemeister number of $\phi$, just like for the ordinary twisted conjugacy relation. This Reidemeister number $R(\phi)$ is either a positive integer or $\infty$.

Note that we already encountered this concept in Section~\ref{sec: fixed point cl} where we saw that 
$R(f_{i_k})$ was equal to the number of generalized twisted conjugacy classes of the morphism 
$\phi_{i_k}:S_{i_k} \to \pi$. So we can rewrite formula (\ref{Som Reidemeister Getallen}) as 
\[ R(f) = R_{i_1}(f) + R_{i_2}(f) + \cdots + R_{i_r}(f)=R(\phi_{i_1}) + R(\phi_{i_2})+ \cdots + R(\phi_{i_r}).\]

As we will need to compute the number of fixed point classes of an $n-$valued map on a nilmanifold, it will be crucial for us to understand how to compute the number of generalized twisted conjugacy classes in case $N$ is a finitely generated torsion free nilpotent group. This section is a rather technical section of which the main result in Theorem~\ref{R(f)} is a formula to compute this number.  But before we start developing this formula, let us first of all present a small example to illustrate this concept.

\begin{example}\label{simple example}
Let $N=\Z$, the additive group of integers and let $H = n \Z$ where $n=dm$ and $d$ and $m$ are positive integers. Take $\phi: H \to N : z \mapsto \displaystyle \frac{z}{m}$. 
Then we have that 
\begin{eqnarray*}
\forall z_1,z_2 \in N=\Z:\; z_1 \sim_\phi z_2 & \iff & \exists nz \in H= n \Z: \; z_1= n z + z_2 - \frac{n z}{m}\\
                                                                      & \iff & \exists z \in \Z:\; z_1 - z_2= \left( n - \frac{n}{m}  \right) z = (n-d) z. 
\end{eqnarray*}
In case $m\neq 1$ (so $d\neq n$), we see that $z_1\sim_\phi z_2 \iff z_1 \equiv z_2 \bmod  n-d $ and so $R(\phi)= n-d$.\\
On the other hand, when $m=1$ (so $d=n$) we get that  $z_1\sim_\phi z_2 \iff z_1 =z_2$ and so $R(\phi)=\infty$.
\end{example}

As indicated above, the aim of this section is to develop a formula for computing $R(\phi)$ for a morphism $\phi: H \to N$, where $H$ is a finite index subgroup of a torsion free finitely generated $c$-step nilpotent group $N.$ We again use the notation $N_i=\sqrt[N]{\gamma_i(N)}$ for the isolators of the terms of the lower central series of $N$ and analogously, we let $H_i =\sqrt[H]{\gamma_i(H)}$. It should be no surprise that the groups $H_i$ are in fact closely related to the $N_i$ as we show in the following lemma. We will use $G$ to denote the Malcev completion of $N.$ 
\begin{lemma}
With the notations introduced above, we have that $H_i=H \cap N_i$.\\
It follows that $H_i \leq_f N_i$ and that there is a natural isomorphism 
\[ \frac{H_i}{H_{i+1}} \cong \frac{H_i N_{i+1}}{N_{i+1}}\cdot\] 
\end{lemma}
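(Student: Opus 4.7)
The plan is to reduce the identity $H_i = H \cap N_i$ to a fact that has already been noted in the paper, namely that $N_i = \gamma_i(G) \cap N$ where $G$ is the Malcev completion of $N$. The key observation is that $H$, being a finite index subgroup of $N$, is automatically a uniform lattice in the same Lie group $G$: it inherits discreteness from $N$, and cocompactness follows because $G/H$ is a finite cover of the compact space $G/N$. Moreover $H$ is torsion free and finitely generated (as a finite index subgroup of such a group), so it has a Malcev completion, which must be $G$ itself. This is the one conceptual step; after that the argument is essentially bookkeeping.

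Given this, the same result (Lemma 1.2.6 of \cite{d}, cited in Section~\ref{sectie3}) applied to $H$ yields $H_i = \gamma_i(G) \cap H$. Intersecting with $H$ on both sides of $N_i = \gamma_i(G) \cap N$ gives
\[ H \cap N_i = H \cap \gamma_i(G) \cap N = \gamma_i(G) \cap H = H_i, \]
which is the first claim. The inclusion $H_i \leq_f N_i$ is then immediate from $[N_i : H_i] = [N_i : H \cap N_i] \leq [N : H] < \infty$.

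For the isomorphism, I would invoke the second isomorphism theorem. Since the $N_j$ are fully characteristic in $N$ (as already noted in Section~\ref{sectie3}), $N_{i+1}$ is in particular normal in $N$, so the product $H_i N_{i+1}$ is a subgroup of $N_i$ and the standard isomorphism
\[ \frac{H_i N_{i+1}}{N_{i+1}} \cong \frac{H_i}{H_i \cap N_{i+1}} \]
holds. It only remains to identify the denominator with $H_{i+1}$. Applying the first part of the lemma with $i$ replaced by $i+1$ gives $H_{i+1} = H \cap N_{i+1}$, and since $N_{i+1} \subseteq N_i$, we have $H_i \cap N_{i+1} = (H \cap N_i) \cap N_{i+1} = H \cap N_{i+1} = H_{i+1}$, finishing the argument.

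The only potential obstacle is the first step: realising that the isolator of $\gamma_i(H)$ in $H$ can be computed from the ambient Malcev completion rather than from some intrinsic chain in $H$. Once one accepts that $H$ and $N$ share the Malcev completion $G$, every subsequent manipulation is routine set-theoretic and group-theoretic juggling.
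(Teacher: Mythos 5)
Your proposal is correct and follows essentially the same route as the paper: both rest on the observation that $G$ is also the Malcev completion of $H$ (so $H_i = H \cap \gamma_i(G)$), deduce $H \cap N_i = H \cap \gamma_i(G) \cap N = H_i$, and then apply the second isomorphism theorem after identifying $H_i \cap N_{i+1}$ with $H_{i+1}$. No gaps.
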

\begin{proof}
Note that $G$ is also the Malcev completion of $H$ (since $H$ is of finite index in $N$), and so we have that $H_i= H \cap \gamma_i(G)= H \cap N \cap \gamma_i(G) = H \cap N_i$. 
The fact that  $H \leq_f N$, implies that $H\cap N_i \leq_f N_i$, hence $H_i \leq_f N_i$. We have the following equalities and the natural isomorphism from the second isomorphism theorem for groups: 
 \begin{align*}
        \frac{H_i}{H_{i+1}}
        =
        \frac{H_i}{H \cap N_{i+1}}
        =
        \frac{H_i}{H_i \cap N_{i+1}}
        \cong
        \frac{H_i N_{i+1}}{N_{i+1}} \cdot
    \end{align*}
\end{proof}

Using the natural isomorphism $\frac{H_i}{H_{i+1}} \cong \frac{H_i N_{i+1}}{N_{i+1}}$ from the lemma above, we may identify $\frac{H_i}{H_{i+1}}$ with a finite index subgroup 
of $\frac{N_i}{N_{i+1}}$ (because $H_i$,  and so also $H_i N_{i+1}$, is a finite index subgroup of $N_i$). Recall that in the previous section we already had chosen a fixed isomorphism 
$\frac{N_i}{N_{i+1}}\cong \Z^{k_i}$. Via this isomorphism the subgroup $ \frac{H_i N_{i+1}}{N_{i+1}}$ (and so also $\frac{H_i}{H_{i+1}}$) corresponds to a subgroup of finite index in $\Z^{k_i}$ which can be written as $B_i \Z^{k_i}$ for some matrix $B_i \in \Z^{k_i \times k_i}$. It holds that the index
\[ \left[\frac{N_i}{N_{i+1}} : \frac{H_i N_{i+1}}{N_{i+1}} \right] = [ \Z^{k_i}: B_i \Z^{k_i}] = | \det (B_i) |. \]

\begin{lemma}\label{induced morphism}
The morphism $\phi$ induces for each $i\in \{1,2,\dots, c\}$ a morphism 
\[
\phi_i: \frac{H_iN_{i+1}}{N_{i+1}}\to \frac{N_i}{N_{i+1}}: h N_{i+1} \mapsto \phi(h) N_{i+1}. 
\]
\end{lemma}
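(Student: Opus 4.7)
The plan is to define $\phi_i$ by choosing, for each coset $x N_{i+1} \in H_i N_{i+1}/N_{i+1}$, a representative $h \in H_i$ (which we can always do by the previous lemma, since $H_i/H_{i+1} \cong H_i N_{i+1}/N_{i+1}$) and then setting $\phi_i(x N_{i+1}) := \phi(h) N_{i+1}$. With this definition, two things need checking: the target really lies in $N_i/N_{i+1}$, the value is independent of the representative, and the resulting map is a homomorphism. Homomorphy is immediate from that of $\phi$ together with the fact that $H_i N_{i+1}/N_{i+1}$ and $N_i/N_{i+1}$ are quotients by $N_{i+1} \trianglelefteq N$, so I would focus the work on the first two points.

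The technical heart is the claim $\phi(H_j) \subseteq N_j$ for every $j$, which takes care of both the codomain ($j = i$) and the well-definedness ($j = i+1$). For this, I would first observe that $\gamma_j$ is functorial on groups and that $H \leq N$, so for any morphism $\phi : H \to N$ one has $\phi(\gamma_j(H)) \subseteq \gamma_j(\phi(H)) \subseteq \gamma_j(N)$. Now take $h \in H_j = \sqrt[H]{\gamma_j(H)}$: by definition some positive power $h^m$ lies in $\gamma_j(H)$, whence
\[ \phi(h)^m = \phi(h^m) \in \gamma_j(N) \subseteq N_j. \]
Since $N_j = \sqrt[N]{\gamma_j(N)}$ is itself isolated in $N$ (iterating the isolator operation returns the same set), this forces $\phi(h) \in N_j$, as desired.

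Granting this, the codomain claim is immediate ($h \in H_i$ gives $\phi(h) \in N_i$). For well-definedness, suppose $h_1, h_2 \in H_i$ represent the same coset in $H_i N_{i+1}/N_{i+1}$; then $h_1^{-1} h_2 \in H_i \cap N_{i+1}$. But the previous lemma gives $H_{i+1} = H \cap N_{i+1}$, so $H_i \cap N_{i+1} = H_{i+1}$, and hence $h_1^{-1} h_2 \in H_{i+1}$. Applying the claim with $j = i+1$ yields $\phi(h_1^{-1} h_2) \in N_{i+1}$, i.e.\ $\phi(h_1) N_{i+1} = \phi(h_2) N_{i+1}$, which is exactly what we need.

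The main obstacle is the claim $\phi(H_j) \subseteq N_j$; everything else is bookkeeping with cosets. That claim in turn hinges only on the functoriality of the lower central series and the standard fact that an isolator is already isolated, both of which are essentially built into the setup of the previous section, so I expect no serious difficulty.
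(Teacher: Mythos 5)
Your proof is correct, but it takes a genuinely different route from the paper's. The paper passes through the Malcev completion: it extends $\phi$ to $\tilde\phi:G\to G$, uses $H_i=H\cap\gamma_i(G)$ and $N_i=N\cap\gamma_i(G)$ together with $\tilde\phi(\gamma_i(G))\subseteq\gamma_i(G)$ to get $\phi(H_i)\subseteq N_i$, and handles well-definedness the same way at level $i+1$. You instead stay entirely inside the abstract groups: functoriality of the lower central series gives $\phi(\gamma_j(H))\subseteq\gamma_j(\phi(H))\subseteq\gamma_j(N)$, and then the idempotence of the isolator (if $\phi(h)^m\in\gamma_j(N)$ then $\phi(h)\in\sqrt[N]{\gamma_j(N)}=N_j$) yields $\phi(H_j)\subseteq N_j$ for all $j$, after which the coset bookkeeping via $H_i\cap N_{i+1}=H_{i+1}$ is the same as the paper's. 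The paper's argument is shorter given the Lie-theoretic machinery it has already set up, while yours is self-contained at the level of nilpotent groups and does not invoke the completion at all; it also makes fully explicit a point the paper glosses over, namely that in the well-definedness step one should apply $\phi$ to an element of $H\cap N_{i+1}=H_{i+1}$ (the paper's inclusion $\tilde\phi(N_{i+1})\subseteq N_{i+1}$ is not literally what is needed, since $\tilde\phi(N)$ need not lie in $N$). Both proofs are complete and correct.
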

\begin{proof}
The morphism $\phi: H \to N$ has a unique extension (see Section \ref{sectie3}) to a morphism $\tilde\phi: G \to G$. As $\tilde \phi (\gamma_i(G)) \subseteq 
\gamma_i(G)$ we have that 
$\phi(H_i) = \tilde \phi ( H \cap \gamma_i(G)) \subseteq \tilde \phi (H) \cap \gamma_i(G) \subseteq N \cap \gamma_i(G)=N_i $ and
hence $\phi(h)\in N_i$ for all $h\in H_i$.  Moreover, if $h N_{i+1} = h' N_{i+1}$, then $h^{-1}h' \in N_{i+1}$ and 
$\phi (h^{-1}h') \in \tilde{\phi}(N_{i+1}) \subseteq N_{i+1}$ showing that $\phi(h) N_{i+1} = \phi (h')N_{i+1}$ and hence $\phi_i$ is a well defined map, which is now easily seen to be a morphism.
\end{proof}

Recall that we had identified $N_i/N_{i+1}$ with $\Z^{k_i}$ and $H_iN_{i+1}/N_{i+1}$ with the subgroup $B_i \Z^{k_i}$ of $N_i/N_{i+1}=\Z^{k_i}$ . Hence we will view 
$\phi_i$ as a morphism $\phi_i: B_i \Z^{k_i} \to \Z^{k_i}$.

\medskip

We will prove in this section that the product formula $R(\phi) = R(\phi_1) R(\phi_2) \cdots R(\phi_c)$ holds and therefore we first focus on the computation of $R(\psi)$ in case 
$\psi$ is a morphism $\psi: B \Z^k \to \Z^k$ for some matrix $B\in \Z^{k\times k}$, with $\det B \neq 0$ (meaning that $[\Z^k : B \Z^k]=|\det(B)|$). Note that in this situation, there exists a matrix $M\in \Q^{k\times k}$ such that $\psi: B \Z^k \to \Z^k: \vec{z} \mapsto M \vec{z}$. The following proposition shows how to express $R(\psi)$ in terms of $M$.
We will use $I_k$ to denote the $k\times k$ identity matrix and we also introduce the following notation:
\begin{definition} Let $a \in \R$, then we define $|a|_\infty$ as follows:
\[ |a|_\infty = \left\{ \begin{array}{cl}
|a| & \mbox{when } a\neq 0\\
\infty & \mbox{when } a=0.
\end{array}
\right.\]
\end{definition}

\begin{proposition}\label{formule abels}
Let $k$ be a positive integer and $B \in \Z^{k\times k}$ a matrix with $\det(B)\neq 0$. Let $\psi: B \Z^k \to \Z^k: \vec{z} \mapsto M \vec{z}$ be a morphism which is determined by a 
matrix $M \in \Q^{k\times k}$.  Then 
\[ R(\psi) =
 [\Z^k : B \Z^k ] | \det(I_k - M) |_\infty.
 \]
\end{proposition}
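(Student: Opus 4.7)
The plan is to translate the equivalence relation $\sim_\psi$ into a coset condition and then compute the resulting index by determinants. Since $\Z^k$ is abelian, for $\vec{\alpha},\vec{\beta}\in\Z^k$ we have
\[ \vec{\alpha}\sim_\psi \vec{\beta} \iff \exists \vec{\gamma}\in B\Z^k:\ \vec{\alpha}-\vec{\beta}=(I_k-M)\vec{\gamma}, \]
so the equivalence classes are exactly the cosets of the subgroup
\[ L := (I_k-M)(B\Z^k) = (I_k-M)B\,\Z^k \]
in $\Z^k$. First I would check that $L$ really is a subgroup of $\Z^k$: since $\psi$ is supposed to take values in $\Z^k$, the matrix $MB$ lies in $\Z^{k\times k}$, and $B\in\Z^{k\times k}$ as well, hence $(I_k-M)B\in\Z^{k\times k}$. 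Thus $R(\psi)=[\Z^k:L]$.

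Next I would split into two cases according to whether $I_k-M$ is invertible. If $\det(I_k-M)\neq 0$, then $(I_k-M)B$ is a nonsingular integer matrix, and a standard fact (Smith normal form, or the volume interpretation of the determinant) gives
\[ [\Z^k:(I_k-M)B\,\Z^k]=|\det((I_k-M)B)|=|\det(I_k-M)|\cdot|\det(B)|=[\Z^k:B\Z^k]\,|\det(I_k-M)|. \]
This matches the claimed formula, since in this case $|\det(I_k-M)|_\infty=|\det(I_k-M)|$.

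If on the other hand $\det(I_k-M)=0$, then $(I_k-M)$ has nontrivial kernel in $\R^k$, so the image $(I_k-M)B\,\R^k$ is a proper $\R$-subspace of $\R^k$. Consequently $L$ is contained in a proper subspace of $\R^k$ and hence has infinite index in $\Z^k$, giving $R(\psi)=\infty$, again matching $[\Z^k:B\Z^k]\,|\det(I_k-M)|_\infty=[\Z^k:B\Z^k]\cdot\infty=\infty$.

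There is no real obstacle here; the only thing worth being careful about is the verification that $L\subseteq\Z^k$ (which uses that $\psi$ lands in $\Z^k$, so $MB$ is integral even though $M$ itself need not be) and the bookkeeping in the degenerate case to make sure the $|\cdot|_\infty$ convention is being used consistently.
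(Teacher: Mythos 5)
Your proof is correct and follows essentially the same route as the paper: reduce $\sim_\psi$ to cosets of the integral subgroup $(I_k-M)B\,\Z^k$ and compute its index as $|\det((I_k-M)B)|_\infty$. The only cosmetic difference is that you split into the singular and nonsingular cases explicitly, whereas the paper absorbs both into the convention $[\Z^k:A\Z^k]=|\det(A)|_\infty$ for integral $A$.
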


\begin{remark}$ $
\begin{itemize}
\item Although the matrix $M\in \Q^{k\times k}$, we must have that $MB$ is a matrix with integral entries because the image of $\psi$ is $MB \Z^k \subseteq \Z^k$.

\item 
Example~\ref{simple example} fits into the context of the above proposition, where $k=1$, $\psi=\phi$, $B$ is a $1\times 1$ matrix with entry $n$ and
$M$ is a $1 \times 1$ matrix with entry $\displaystyle \frac1m$. The proposition predicts that 
$R(\psi) = [\Z: n \Z] | \det( 1-\displaystyle \frac1m) | = n ( 1 - \displaystyle \frac1m) = n - d $ when $\det(1 - \displaystyle \frac1m) \neq 0$ ($\iff m \neq 1$) and 
$R(\psi) =\infty$ otherwise. This agrees with what we obtained by explicit calculation.
\end{itemize}
\end{remark}

\begin{proof}[Proof of Proposition \ref{formule abels}]
We have that for $\vec{z}_1, \vec{z}_2 \in \Z^k$:
\begin{eqnarray*}
\vec{z}_1 \sim_\psi \vec{z}_2 & \iff & \exists \vec{z} \in B\Z^k:\; \vec{z}_1  = \vec{z} + \vec{z}_2 - \psi( \vec{z}) \\
                                               & \iff & \exists \vec{z} \in B\Z^k:\; \vec{z}_1 -\vec{z}_2 = (I_k - M) \vec{z}.
\end{eqnarray*}
So, we see that $R(\psi)$ is the number of cosets of the subgroup $(I_k - M) B \Z^k$ of $\Z^k$ and hence we are looking for the index $[\Z^k: (I_k - M) B \Z^k]$. It is well known that for an integral $k\times k$ matrix $A$ it holds that $[\Z^k : A\Z^k]= |\det(A)|_\infty$. As $ (I_k - M) B  = B - MB$ is an integral matrix and $\det(B) \neq 0$, we get that 
\[ R(\psi) =|\det ((I_k-M) B)|_\infty = |\det(B)| |\det (I_k-M)|_\infty = [\Z^k : B \Z^k ] |\det (I_k-M)|_\infty.\]
\end{proof}

\begin{remark} \label{def Mi}
For each $i \in \{ 1,2, \dots, c\}$ we view 
$\phi_i$ as a morphism $B_i \Z^{k_i} \to \Z^{k_i}.$
We introduce the notation $M_i$ for the matrix in $\Q^{k_i \times k_i}$ associated with this morphism:
$$\phi_i : B_i \Z^{k_i} \to \Z^{k_i}: \vec{z} \mapsto M_i \vec{z} .$$
From Proposition \ref{formule abels}, it follows that
\[ R(\phi_i) =
 [\Z^{k_i} : B_i \Z^{k_i} ] | \det(I_{k_i} - M_i) |_\infty 
 = \bigg[\frac{N_i}{N_{i+1}} : \frac{H_i N_{i+1}}{N_{i+1}} \bigg] | \det(I_{k_i} - M_i) |_\infty  .
 \]
\end{remark}

The following lemma and its proof is a straightforward generalization of a result of Gon\c{c}alves and Wong in \cite{gw}.
\begin{lemma} \label{connection Reidemeister numbers} Consider groups $A_1 \leq_f A_2$ and $B_1 \leq_f B_2$ with $A_1 \trianglelefteq B_1,$ $A_2 \trianglelefteq B_2$ and $A_1= B_1\cap A_2.$
Assume we have the following commutative diagram with short exact rows
(with natural inclusion and projection morphisms).
\[ \begin{tikzcd}
    1 \arrow{r}  & A_1 \arrow{d}{\varphi'}\arrow{r}& B_1 \arrow{d}{\varphi}\arrow{r} & \frac{B_1}{A_1} \cong  \frac{B_1 A_2}{A_2}  \; \;  \arrow{r}\arrow{d}{\bar\varphi}  & 1\\
     1 \arrow{r}& A_2\arrow{r} & B_2 \arrow{r} & \frac{B_2}{A_2} \arrow{r} & 1
    \end{tikzcd}
\]
Then we have:
 \begin{enumerate}
        \item If $R(\bar\varphi)=\infty$, then $ R(\varphi) = \infty.$
        \item If $R(\bar\varphi)< \infty,\; |\Fix ( \bar \varphi)| <\infty$ and $R(\varphi')=\infty$, then $ R(\varphi)=\infty.$
        \item 
        If $A_2 \leq Z(B_2)$, then $ R(\varphi)\leq R(\bar\varphi)R(\varphi').$
    \end{enumerate}
\end{lemma}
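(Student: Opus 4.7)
The lemma is controlled by how the projection $p:B_2\to B_2/A_2$ interacts with twisted conjugacy. Commutativity of the diagram yields $p\circ\varphi=\bar\varphi\circ p|_{B_1}$, so any witness $\gamma\in B_1$ of $\alpha\sim_\varphi\beta$ descends to a witness $\bar\gamma\in B_1/A_1$ of $p(\alpha)\sim_{\bar\varphi}p(\beta)$. Hence $p$ induces a surjection of quotients
\[
\bar p:B_2/{\sim_\varphi}\longrightarrow (B_2/A_2)/{\sim_{\bar\varphi}},
\]
so $R(\varphi)\geq R(\bar\varphi)$, which settles (1).

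For (2) and (3) I will describe the fibres of $\bar p$ explicitly. Fix a $\sim_{\bar\varphi}$-class $[\bar\beta]$ and a lift $\beta\in B_2$. I claim that every $\alpha\in p^{-1}([\bar\beta])$ is $\sim_\varphi$-equivalent to some element of the form $\beta a$ with $a\in A_2$: writing $\alpha=\gamma_0\beta\varphi(\gamma_0^{-1})\,a_0$ with $\gamma_0\in B_1$ and $a_0\in A_2$, twist-conjugating by $\gamma_0^{-1}$ replaces $\alpha$ by $\beta\cdot\varphi(\gamma_0^{-1})a_0\varphi(\gamma_0)$, and the second factor lies in $A_2$ because $A_2\trianglelefteq B_2$. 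The fibre $\bar p^{-1}([\bar\beta])$ is therefore parametrised by the relation
\[
a\sim_\beta a'\iff \beta a\sim_\varphi\beta a'
\]
on $A_2$, and the task reduces to bounding its number of classes.

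Part (3) follows because, when $A_2\leq Z(B_2)$, the relation $\sim_{\varphi'}$ refines $\sim_\beta$ for every lift $\beta$: if $a=\eta a'\varphi'(\eta^{-1})$ with $\eta\in A_1$, then since $\eta\in A_1\subseteq A_2\leq Z(B_2)$ the element $\eta$ commutes with $\beta$, giving $\beta a=\eta\beta a'\varphi(\eta^{-1})$, which witnesses $\beta a\sim_\varphi\beta a'$. Each of the $R(\bar\varphi)$ fibres therefore contains at most $R(\varphi')$ classes, producing $R(\varphi)\leq R(\bar\varphi)R(\varphi')$.

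For (2) I specialise to $\beta=e$, so that $a\sim_e a'$ iff $a=\gamma a'\varphi(\gamma^{-1})$ for some $\gamma\in B_1$. Reducing this equation modulo $A_2$ and using $\overline{a'}=\bar e$ forces $\bar\gamma=\bar\varphi(\bar\gamma)$, i.e.\ $\bar\gamma\in\Fix(\bar\varphi)$. Let $T\leq B_1$ be the preimage of $\Fix(\bar\varphi)$ under $B_1\to B_1/A_1$; then $[T:A_1]=|\Fix(\bar\varphi)|<\infty$ by hypothesis. Choosing coset representatives $\gamma_1,\dots,\gamma_s$ of $A_1$ in $T$ and writing an arbitrary $\gamma\in T$ as $\eta\gamma_i$ with $\eta\in A_1$, the element $\gamma a'\varphi(\gamma^{-1})$ equals $\eta\,b_i\,\varphi'(\eta^{-1})$ with $b_i:=\gamma_i a'\varphi(\gamma_i^{-1})\in A_2$. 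Hence each $\sim_e$-class on $A_2$ is a union of at most $s$ classes of $\sim_{\varphi'}$, so $R(\varphi')=\infty$ forces infinitely many $\sim_e$-classes, and since $\sim_e$ is the restriction of $\sim_\varphi$ to $A_2\subseteq B_2$ we conclude $R(\varphi)=\infty$. The main obstacle is precisely this coset bookkeeping; keeping the decomposition of $T/A_1$ tight so that $\sim_e$ is genuinely bounded from below by $\sim_{\varphi'}$ (rather than merely mapping onto it) is where normality of $A_2$ in $B_2$ and finiteness of $\Fix(\bar\varphi)$ enter in an essential way.
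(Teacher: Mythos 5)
Your proof is correct; the paper itself omits the argument (deferring to the Gon\c{c}alves--Wong reference \cite{gw}), and your fibration-of-Reidemeister-classes argument --- projecting classes to $B_2/A_2$ for (1), normalising a fibre to elements $\beta a$ with $a\in A_2$ and comparing $\sim_\beta$ with $\sim_{\varphi'}$ for (2) and (3) --- is essentially the standard proof from that reference, adapted correctly to the generalized (finite-index domain) setting. One minor observation: your argument for (2) never actually uses the hypothesis $R(\bar\varphi)<\infty$, only $|\Fix(\bar\varphi)|<\infty$ and $R(\varphi')=\infty$, so you have in fact proved a slightly stronger statement.
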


\begin{remark}
In item 3.\ above, the product $ R(\bar\varphi)R(\varphi')$ is defined to be $\infty$ if $ R(\bar\varphi)=\infty$ or $R(\varphi')=\infty$. In the analogous result of \cite{gw} for usual twisted conjugacy, there is an equal sign ($=$) for item 3., but this should be an inequality ($\leq$).
\end{remark}

As $\phi_i$ can be seen as a morphism $\phi:B_i \Z^{k_i} \to \Z^{k_i}: \vec{z} \mapsto M_i \vec{z}  $ we can define the eigenvalues of such a morphism $\phi_i$ to be the eigenvalues of the rational matrix $M_i$. Note that this is independent of the choice of the isomorphism between $N_i/N_{i+1}$ and $\Z^{k_i}$.



It is obvious that 
$|\Fix(\phi_i)| = \infty $ if and only if  $M_i$ (and so by definition also $\phi_i$) has eigenvalue~1.

For nilpotent groups, we now have

\begin{lemma} \label{eigv 1}
Let $N$ be a finitely generated, torsion free nilpotent group of nilpotency class $c,$ $H \leq_f N$ and $\phi: H \to N$ a morphism. If $|\Fix(\phi)| = \infty,$ then there exists an $i \in \{ 1,2, \dots, c\}$ such that $\phi_i : \frac{H_iN_{i+1}}{N_{i+1}} \to \frac{N_{i}}{N_{i+1}} $ has eigenvalue 1.
\end{lemma}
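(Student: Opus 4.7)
The natural approach is induction on the nilpotency class $c$ of $N$. The base case $c=1$ is immediate: here $N = N_1 \cong \Z^{k_1}$, the finite-index subgroup $H$ corresponds to $B_1 \Z^{k_1}$, and $\phi = \phi_1$ is given by a rational matrix $M_1 \in \Q^{k_1\times k_1}$. Then $\Fix(\phi)$ is the intersection of the $\Q$-subspace $\ker(I_{k_1}-M_1)$ with the full-rank lattice $B_1\Z^{k_1}$, so $\Fix(\phi)$ is infinite if and only if $M_1$ has eigenvalue $1$.

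For $c \geq 2$, the plan is to pass to the quotient by $N_c$. Since $\gamma_c(\lie)$ is central in $\lie$ and $N_c = N \cap \exp(\gamma_c(\lie))$, the subgroup $N_c$ lies in $Z(N)$. One then has the commutative diagram of short exact sequences $1 \to H_c \to H \to H/H_c \to 1$ and $1 \to N_c \to N \to N/N_c \to 1$ linked by $\phi$, by $\phi|_{H_c}: H_c \to N_c$ (which equals $\phi_c$ since $N_{c+1}=1$), and by the induced morphism $\bar\phi: H/H_c \to N/N_c$. A preparatory step, which I view as the main technical hurdle, is the identification $(N/N_c)_j = N_j/N_c$ and $(H/H_c)_j = H_j/H_c$ for $1 \leq j \leq c-1$; the nontrivial inclusion uses centrality of $N_c$ to commute $g \in \gamma_j(N)$ with $n \in N_c$ and deduce $x^{k\ell} = g^{\ell} n^{\ell} \in \gamma_j(N)$ from $x^k = g n$ after choosing $\ell$ with $n^\ell \in \gamma_c(N) \subseteq \gamma_j(N)$. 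Under these identifications $\bar\phi_j$ is literally $\phi_j$ for $1 \leq j \leq c-1$.

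Now split into two cases. If $|\Fix(\bar\phi)| = \infty$, the inductive hypothesis applied to the class $c-1$ group $N/N_c$ supplies some $j \leq c-1$ for which $\bar\phi_j = \phi_j$ has eigenvalue $1$, and we are done. Otherwise $|\Fix(\bar\phi)|<\infty$, and since the projection $\Fix(\phi) \to \Fix(\bar\phi),\; h \mapsto hH_c$, has an infinite source but finite target, some fibre is infinite by pigeonhole. Picking $h_0$ in such a fibre, every other element of the fibre has the form $h_0 z$ for $z \in H_c$, and the fixed-point condition $\phi(h_0 z) = h_0 z$ reduces, using $\phi(h_0) = h_0$ and the homomorphism property, to $\phi(z) = z$. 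Thus $\Fix(\phi_c)$ is infinite, and the base-case argument applied to the abelian map $\phi_c : H_c \to N_c$ yields that $M_c$ has eigenvalue $1$, completing the induction with $i = c$.
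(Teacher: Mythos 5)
Your proof is correct, but it takes a genuinely different and considerably longer route than the paper's. You run an induction on the nilpotency class, quotient by $N_c$, establish the identifications $(N/N_c)_j=N_j/N_c$ and $(H/H_c)_j=H_j/H_c$, and in the case $|\Fix(\bar\phi)|<\infty$ use a pigeonhole argument on the fibres of $\Fix(\phi)\to\Fix(\bar\phi)$ to produce infinitely many fixed points of $\phi_c$; each of these steps checks out (in particular the reduction $\phi(h_0z)=h_0z\Rightarrow\phi(z)=z$ and the base-case lattice argument are fine). The paper instead gives a direct, non-inductive argument: take any $h\in\Fix(\phi)\setminus\{1\}$ and let $i$ be the \emph{largest} index with $h\in H_i$; then $hN_{i+1}\neq N_{i+1}$ (else $h\in N_{i+1}\cap H=H_{i+1}$, contradicting maximality), and $\phi_i(hN_{i+1})=hN_{i+1}$, so $\phi_i$ has a nontrivial fixed vector and hence eigenvalue $1$. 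What the paper's approach buys is brevity and the observation that only the existence of a single nontrivial fixed point is needed (equivalent to $|\Fix(\phi)|=\infty$, since $\Fix(\phi)$ is a subgroup of the torsion-free group $H$); what yours buys is self-containedness, though at the cost of re-deriving the quotient machinery ($\overline{(N_i)}=(\overline{N})_i$, the compatibility $\bar\phi_j=\phi_j$) that the paper sets up separately, and only after this lemma, for use in Proposition~\ref{4.3.3 MM} and Theorem~\ref{R(f) product}.
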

\begin{proof} Let $h \in \Fix(\phi) \setminus \{ 1_H \}$ and let $i \in \{ 1,2, \dots, c\}$ be the biggest $i$ such that $h \in H_i.$ Then $h N_{i+1} \neq 1 N_{i+1}$ because otherwise $h \in N_{i+1} \cap H = H_{i+1}.$ So $h N_{i+1}$ is not the identity element, and $\phi_i( h N_{i+1} ) = h N_{i+1}.$ It follows that $\phi_i$ has eigenvalue 1.
\end{proof}

Let $X \leq N.$ We will use $\overline{X}$ to denote the group $\overline{X} := \frac{X N_c}{N_c}.$ So $\overline{N} = \frac{N N_c}{N_c} = \frac{N}{N_c}$ is a finitely generated, torsion free $(c-1)-$step nilpotent group, $\overline{(N_i)} = \frac{N_i N_c}{N_c} = \frac{N_i}{N_c}$ and $\overline{H} = \frac{H N_c}{N_c}.$ Note that we introduced brackets in the notation $\overline{(N_i)},$ in order to clearly show the difference with $(\overline{N})_i.$ The last group is $(\overline{N})_i = \big( \frac{N}{N_c} \big)_i = \sqrt[\frac{N}{N_c}\;\;]{ \gamma_i \big( \frac{N}{N_c} \big) }.$ However, there is not really any confusion possible, since we have the following lemma.

\begin{lemma}\label{overline N H}
Let $N$ be a finitely generated, torsion free nilpotent group  of nilpotency class $c$ and $H \leq_f N.$ Then $ \overline{H} \leq_f \overline{N}$ and we have that $\overline{(N_i)} = (\overline{N})_i$ and $\overline{(H_i)} = (\overline{H})_i$ for all $i \in \{ 1,2,\dots, c-1\}.$
\end{lemma}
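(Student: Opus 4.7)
The plan is to prove the three assertions one at a time, using only standard isomorphism theorems together with the characterisation of the isolator as the smallest subgroup giving a torsion-free quotient.

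First I would dispose of the finite-index statement. Let $\pi: N \to N/N_c = \overline{N}$ be the canonical projection; then $\overline{H}=\pi(H)$, and $\pi^{-1}(\overline{H})=HN_c \supseteq H$, so $[\overline{N}:\overline{H}]=[N:HN_c]$ divides $[N:H]<\infty$.

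Next I would handle $\overline{(N_i)}=(\overline{N})_i$ for $i\in\{1,\dots,c-1\}$. Note that $\gamma_i(\overline{N})=\gamma_i(N)N_c/N_c$ because the projection $\pi$ is surjective and commutator subgroups commute with surjections. For one inclusion, take $x\in N_i$; then $x^k\in\gamma_i(N)$ for some $k>0$, so $(xN_c)^k=x^kN_c\in\gamma_i(\overline{N})$, showing $\pi(x)\in\sqrt[\overline{N}]{\gamma_i(\overline{N})}=(\overline{N})_i$. Hence $\overline{(N_i)}\subseteq(\overline{N})_i$. For the reverse inclusion, the third isomorphism theorem together with $N_c\leq N_i$ (which holds since $i\leq c$) gives
\[
\overline{N}/\overline{(N_i)} \;=\; (N/N_c)\big/(N_iN_c/N_c)\;\cong\;N/N_i,
\]
and this is torsion free by definition of $N_i$. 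Since $(\overline{N})_i$ is the \emph{smallest} subgroup with torsion-free quotient and containing $\gamma_i(\overline{N})$, and since $\overline{(N_i)}\supseteq \gamma_i(\overline{N})$, we conclude $(\overline{N})_i\subseteq\overline{(N_i)}$.

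The equality $\overline{(H_i)}=(\overline{H})_i$ I would prove by the exact same template, using the result of the previous lemma that $H_i=H\cap N_i$. In particular $H_c=H\cap N_c$, so
\[
\overline{H}\;=\;HN_c/N_c\;\cong\;H/(H\cap N_c)\;=\;H/H_c,
\qquad
\overline{(H_i)}\;\cong\;H_i/H_c
\]
(the last identification using $H_c\leq H_i$ for $i\leq c$). The inclusion $\overline{(H_i)}\subseteq(\overline{H})_i$ is obtained exactly as before by raising to a suitable power. For the reverse inclusion, the third isomorphism theorem yields $\overline{H}/\overline{(H_i)}\cong H/H_i$, which is torsion free, so the minimality of $(\overline{H})_i$ forces $(\overline{H})_i\subseteq\overline{(H_i)}$. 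The only genuine subtlety anywhere in the argument is verifying that the relevant quotients really are isomorphic to $N/N_i$ and $H/H_i$; this is routine but requires $i\leq c$ so that $N_c\leq N_i$ and $H_c\leq H_i$, which explains the range of indices in the statement.
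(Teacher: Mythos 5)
Your proof is correct, but it takes a genuinely different route from the paper's in two places. For the nontrivial inclusion $(\overline{N})_i\subseteq\overline{(N_i)}$, the paper argues element-wise: it takes $yN_c$ with $y^k\in\gamma_i(N)N_c$, writes $y^k=\alpha\beta$ with $\alpha\in\gamma_i(N)$ and $\beta\in N_c\subseteq Z(N)$, and uses centrality to get $y^{kl}=\alpha^l\beta^l\in\gamma_i(N)$; you instead invoke the characterisation of the isolator as the smallest (normal) subgroup containing $\gamma_i(\cdot)$ with torsion-free quotient, together with $\overline{N}/\overline{(N_i)}\cong N/N_i$. Your version avoids the commutator manipulation, at the cost of having to note that this minimality characterisation (recalled in Section~\ref{sectie3} for $N$) applies to $\overline{N}$ --- which it does, since $\overline{N}=N/N_c$ is again finitely generated torsion-free nilpotent --- and that $\gamma_i(\overline{N})\subseteq\overline{(N_i)}$, which uses $N_c\leq N_i$. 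For the statement about $H$, the paper does not repeat the argument: it deduces $(\overline{H})_i=\overline{(H_i)}$ in one line from the $N$-case by applying the preceding lemma twice, via $(\overline{H})_i=\overline{H}\cap(\overline{N})_i=\frac{HN_c}{N_c}\cap\frac{N_iN_c}{N_c}=\frac{(H\cap N_i)N_c}{N_c}=\overline{(H_i)}$, whereas you rerun both inclusions for $H$ directly. Both are valid; the paper's derivation is shorter but hides the small verification $\frac{HN_c}{N_c}\cap\frac{N_iN_c}{N_c}=\frac{(H\cap N_i)N_c}{N_c}$, while yours is longer but self-contained modulo the second/third isomorphism theorem bookkeeping that you correctly flag as the only delicate point.
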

\begin{proof}
To prove $ \overline{(N_i)} \supseteq(\overline{N})_i ,$ we choose an arbitrary $x \in \big( \frac{N}{N_c} \big)_i = \sqrt[\frac{N}{N_c}\;\;]{ \gamma_i \big( \frac{N}{N_c} \big) } .$ Then $ x = y N_c $ with $y \in N,$ and $ ( y N_c )^k = y^k N_c \in \gamma_i \big( \frac{N}{N_c} \big) $ for a positive integer $k.$ Since $ \gamma_i \big( \frac{N}{N_c} \big) = \frac{\gamma_i(N) N_c}{N_c},$ it follows that $y^k \in \gamma_i(N) N_c,$ so $y^k = \alpha \beta$ for certain $\alpha \in \gamma_i(N)$ and $\beta \in N_c = \sqrt[N]{\gamma_c(N)}.$ Let $l $ be a positive integer for which $\beta^l \in \gamma_c(N).$ Since $N_c \subseteq Z(N),$ we know that $[\alpha,\beta]=1,$ so
\begin{equation*}
        y^{kl} = (y^k)^l=(\alpha\beta)^l=\alpha^l\beta^l \in \gamma_i(N) \gamma_c(N) = \gamma_i(N).
\end{equation*}
It follows that $ y \in \sqrt[N]{\gamma_i(N)} = N_i$ and we conclude that $ x = y N_c \in \frac{N_i}{N_c} = \overline{(N_i)}.$
To prove $ \overline{(N_i)} \subseteq (\overline{N})_i,$ we take an arbitrary $z N_c \in \overline{(N_i)} = \frac{N_i}{N_c}.$ Since $z \in N_i = \sqrt[N]{\gamma_i(N)},$ there exists a positive integer $m $ such that $z^m \in \gamma_i(N).$ Then $(z N_c)^m = z^m N_c \in \frac{\gamma_i(N) N_c}{N_c} = \gamma_i\big( \frac{N}{N_c} \big) ,$ so $z N_c \in \sqrt[ \frac{N}{N_c} \; \;]{ \gamma_i\big(\frac{N}{N_c}\big) } = \big( \frac{N}{N_c} \big)_i = (\overline{N})_i. $ Hence we proved that $ \overline{(N_i)}=(\overline{N})_i.$ That gives us
\begin{align*}
        (\overline{H})_i = \overline{H} \cap (\overline{N})_i
        = \overline{H} \cap \overline{(N_i)}
        = \frac{H N_c}{N_c} \cap \frac{N_i N_c}{N_c}
        {\textbf{=}} \frac{ ( H \cap N_i ) N_c }{ N_c }
        = \frac{H_i N_c}{ N_c }
        = \overline{(H_i)}.
\end{align*}
\end{proof}

Recall that for a morphism $\phi: H \to N$ we defined the induced morphisms $\phi_i:\frac{H_iN_{i+1}}{N_{i+1}}\to \frac{N_i}{N_{i+1}}$ (see Lemma \ref{induced morphism}). Analogously, there is also an induced morphism $\overline{\phi}: \overline{H} = \frac{H N_c}{N_c} \to \overline{N} = \frac{ N}{N_c}: h N_c \mapsto \phi(h) N_c$ and hence also induced morphisms $(\overline{\phi})_i : \frac{\overline{H}_i \overline{N}_{i+1}}{\overline{N}_{i+1}} \to \frac{\overline{N}_i }{\overline{N}_{i+1}}: \overline{h} \; \overline{N}_{i+1} \mapsto \bar\phi(\bar h)\overline{N}_{i+1}.$

\begin{lemma} \label{R phi bar}
With the notation introduced above, we have that $R(\bar{(\phi)_i})=R(\phi_i)$ for all $i \in \{ 1,2,\dots, c-1\}.$
\end{lemma}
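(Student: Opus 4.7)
The plan is to show that after natural identifications the morphism $(\overline{\phi})_i$ is literally the same as $\phi_i$. Since the generalized twisted conjugacy relation is transported bijectively by a commutative square of group isomorphisms, this will immediately give $R((\overline{\phi})_i)=R(\phi_i)$.

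The crucial observation is that for $i\leq c-1$ we have $N_c\subseteq N_{i+1}$, so quotienting $N$ by $N_c$ does not disturb the quotient $N_i/N_{i+1}$. Concretely, combining Lemma~\ref{overline N H} with $N_c\subseteq N_{i+1}$ I get
\[
\overline{N}_i=\overline{(N_i)}=\frac{N_i}{N_c},\qquad \overline{N}_{i+1}=\frac{N_{i+1}}{N_c},\qquad \overline{H}_i=\overline{(H_i)}=\frac{H_iN_c}{N_c},
\]
and then $\overline{H}_i\,\overline{N}_{i+1}=\frac{H_iN_{i+1}}{N_c}$ (again using $N_c\subseteq N_{i+1}$). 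The third isomorphism theorem now produces two canonical isomorphisms
\[
\Psi_1\colon \frac{\overline{N}_i}{\overline{N}_{i+1}}\xrightarrow{\;\sim\;}\frac{N_i}{N_{i+1}},\qquad \Psi_2\colon \frac{\overline{H}_i\,\overline{N}_{i+1}}{\overline{N}_{i+1}}\xrightarrow{\;\sim\;}\frac{H_iN_{i+1}}{N_{i+1}},
\]
both acting by sending the class of $xN_c$ to the class of $x$.

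Next I will check that $\Psi_1$ and $\Psi_2$ intertwine $(\overline{\phi})_i$ with $\phi_i$. Unwinding the definitions, $(\overline{\phi})_i$ sends the class of $hN_c$ to the class of $\phi(h)N_c$; under $\Psi_1$ this lands in the class of $\phi(h)$, which is precisely $\phi_i(\Psi_2(\,\overline{h}\,\overline{N}_{i+1}))$. Hence the relevant square commutes, identifying $(\overline{\phi})_i$ with $\phi_i$ as morphisms between isomorphic pairs of groups. A direct verification then shows that the $\sim_{(\overline{\phi})_i}$-equivalence classes correspond bijectively to the $\sim_{\phi_i}$-equivalence classes via $\Psi_2$, yielding the equality of Reidemeister numbers.

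I do not expect any serious obstacle: the argument reduces entirely to the inclusion $N_c\subseteq N_{i+1}$ for $i\leq c-1$, which is exactly why the statement excludes $i=c$. The only thing worth double-checking while writing up is the identification $\overline{H}_i\overline{N}_{i+1}=H_iN_{i+1}/N_c$, which is the lone place where that inclusion is used in an essential way.
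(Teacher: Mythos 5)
Your proposal is correct and follows essentially the same route as the paper: both reduce the claim to the third isomorphism theorem applied to $\frac{H_iN_{i+1}N_c}{N_c}\big/\frac{N_{i+1}}{N_c}\cong \frac{H_iN_{i+1}}{N_{i+1}}$ and $\frac{N_i/N_c}{N_{i+1}/N_c}\cong \frac{N_i}{N_{i+1}}$, and then observe that these identifications intertwine $(\overline{\phi})_i$ with $\phi_i$, so the two generalized twisted conjugacy relations correspond. Your explicit flagging of the inclusion $N_c\subseteq N_{i+1}$ as the reason the case $i=c$ is excluded is exactly the point implicit in the paper's computation.
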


\begin{proof}
    We know that
    \begin{equation*}
        \frac{ \overline{H_{i}} \; \overline{N_{i+1}} }{ \overline{N_{i+1}} }
        = 
        \frac{ \frac{H_{i}N_c}{N_c} \frac{N_{i+1}}{N_c} }{ \frac{N_{i+1}}{N_c} }
        =
        \frac{ \frac{H_{i}N_{i+1}N_c}{N_c}  }{ \frac{N_{i+1}}{N_c} }
        \cong 
        \frac{ {H_{i}N_{i+1}N_c}  }{ {N_{i+1}}}
        =
        \frac{ {H_{i}N_{i+1}}  }{ {N_{i+1}}},
    \end{equation*}
    where the isomorphism holds because of the third isomorphism theorem. 
    Since
    $ \frac{ \overline{{N}_i} }{ \overline{{N}_{i+1}}}
    = \frac{ \frac{N_i}{N_c} }{ \frac{ N_{i+1} }{N_c}  } \cong \frac{ {N_i} }{ {N_{i+1}} }, $ we get the following commutative diagram:

    \centering
    \begin{tikzpicture}
\node[] (1) at (0,0) {$\frac{ {H_{i}N_{i+1}}  }{ {N_{i+1}}}$};
\node[] (2) at (3,0) {$\frac{ (\overline{H})_{i} \; (\overline{N})_{i+1} }{ (\overline{N})_{i+1} }$};
\node[] (3) at (6,0) {$\frac{ (\overline{N})_i }{ (\overline{N})_{i+1} }$};
\node[] (4) at (9,0) {$\frac{N_i}{N_{i+1}}$};

\draw [->] (1) -- (2);
\node[] (a) at (1.3,0.3) {$\cong$};
\draw [->] (2) -- (3);
\node[] (b) at (4.5,0.3) {$(\bar \phi)_i$};
\draw [->] (3) -- (4);
\node[] (c) at (7.5,0.3) {$\cong$};
\draw [->] (1) to [out=330,in=210] (4);
\node[] (e) at (4.5,-1.2) {$\phi_i$};
\end{tikzpicture}
 
    \noindent so we conclude that $R(\bar{(\phi)_i})=R(\phi_i).$
\end{proof}

\begin{proposition}\label{4.3.3 MM}
    With the previous notation, we have that $R(\phi) = \infty$ if and only if there exists an $i \in \{ 1,2,\dots, c \}$ such that $ \phi_i $ has eigenvalue 1. 
\end{proposition}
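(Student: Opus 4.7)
The plan is to proceed by induction on the nilpotency class $c$ of $N$, exploiting the central extension $1\to N_c\to N\to N/N_c\to 1$ and the short-exact-sequence machinery of Lemma \ref{connection Reidemeister numbers}. The base case $c=1$ is immediate: $N$ is abelian, $\phi$ coincides with $\phi_1$ under the identifications of Remark \ref{def Mi}, and Proposition \ref{formule abels} directly shows that $R(\phi)=R(\phi_1)$ is infinite if and only if $M_1$ (equivalently $\phi_1$) has eigenvalue $1$.

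For the inductive step, assume the statement for every finitely generated torsion free nilpotent group of class less than $c$, and apply Lemma \ref{connection Reidemeister numbers} with $A_1=H_c$, $A_2=N_c$, $B_1=H$, $B_2=N$. The relations $H_c=H\cap N_c$, $H_c\trianglelefteq H$, $N_c\trianglelefteq N$, and the centrality $N_c\subseteq Z(N)$ supply all the hypotheses; the quotients are $\overline{H}$ and $\overline{N}$, and the induced morphisms are $\phi'=\phi|_{H_c}$ and $\overline{\phi}$. Two identifications are essential. First, $\phi'$ is literally the morphism $\phi_c$, since $N_{c+1}=1$ forces $H_cN_{c+1}/N_{c+1}=H_c$ and $N_c/N_{c+1}=N_c$; in particular $R(\phi')=R(\phi_c)$ and $\phi'$ has eigenvalue $1$ precisely when $\phi_c$ does. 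Second, for each $i<c$, Lemma \ref{R phi bar} gives $R((\overline{\phi})_i)=R(\phi_i)$, which combined with Proposition \ref{formule abels} forces $\phi_i$ to have eigenvalue $1$ if and only if $(\overline{\phi})_i$ does. Since $\overline{N}=N/N_c$ has nilpotency class at most $c-1$, the induction hypothesis applies to $\overline{\phi}$.

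Both implications now follow. For the forward direction (contrapositive), if no $\phi_i$ has eigenvalue $1$, then $R(\phi')<\infty$ by Proposition \ref{formule abels} and $R(\overline{\phi})<\infty$ by the induction hypothesis, so part~3 of Lemma \ref{connection Reidemeister numbers} gives $R(\phi)\leq R(\overline{\phi})\,R(\phi')<\infty$. For the reverse direction, suppose some $\phi_i$ has eigenvalue $1$. If this occurs for some $i<c$, then $(\overline{\phi})_i$ has eigenvalue $1$, so by induction $R(\overline{\phi})=\infty$, and part~1 of the lemma yields $R(\phi)=\infty$. If instead only $\phi_c$ has eigenvalue $1$, the induction hypothesis gives $R(\overline{\phi})<\infty$; since no $(\overline{\phi})_i$ has eigenvalue $1$ for $i<c$, the contrapositive of Lemma \ref{eigv 1} applied to $\overline{\phi}$ gives $|\Fix(\overline{\phi})|<\infty$; finally $R(\phi')=R(\phi_c)=\infty$, so part~2 of Lemma \ref{connection Reidemeister numbers} delivers $R(\phi)=\infty$.

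The hard part is the case split in the reverse direction: the exact-sequence lemma does not uniformly handle both the eigenvalue-$1$ problem appearing in the bottom quotient $\phi_c$ and in $\overline{\phi}$. Establishing finiteness of $\Fix(\overline{\phi})$ in the sub-case where only $\phi_c$ is responsible requires Lemma \ref{eigv 1} as an auxiliary input to unlock part~2 of Lemma \ref{connection Reidemeister numbers}; without it, $R(\phi')=\infty$ would not by itself propagate to $R(\phi)=\infty$.
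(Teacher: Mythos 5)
Your proposal is correct and follows essentially the same route as the paper's proof: the same induction on the nilpotency class via the central extension $1\to N_c\to N\to N/N_c\to 1$, the same use of parts 1--3 of Lemma \ref{connection Reidemeister numbers}, the same transfer $R((\overline{\phi})_i)=R(\phi_i)$ from Lemma \ref{R phi bar}, and the same invocation of Lemma \ref{eigv 1} to secure $|\Fix(\overline{\phi})|<\infty$ in the sub-case where only $\phi_c$ has eigenvalue $1$. The only cosmetic difference is that you phrase the forward implication contrapositively, whereas the paper argues it directly.
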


\begin{proof} Since for each $i \in \{ 1,2,\dots, c \}$ we have
    \begin{align*}
        \phi_i \text{ has eigenvalue 1 }
    &\iff \exists  \bar \gamma \in B_i \Z^{k_i} \setminus{ \{0\}}:
    M_i \bar \gamma = \bar \gamma
    \\
    &\iff \exists  \bar \gamma \in B_i \Z^{k_i} \setminus{ \{0\}}:
    (I_{k_i} - M_i) \bar\gamma = 0
    \\
    &\iff \det (I_{k_i} - M_i) = 0
    \\
    &\iff 
    |\det (I_{k_i} - M_i)|_\infty =\infty
    \\
    &\iff R(\phi_i)=\infty,
    \end{align*} 
    where the last equivalence holds because of Remark \ref{def Mi},
    we have to prove that
    \begin{align}\label{TB}
        R(\phi)=\infty \iff \exists i \in \{1,2,\dots,c\}: R(\phi_i)=\infty.
    \end{align}
    We prove (\ref{TB}) by induction on the nilpotency class $c$ of the group $N.$
    The case $c=1$ is trivial.
    Suppose (\ref{TB}) holds for all finitely generated, torsion free nilpotent groups $N$ of nilpotency class $\leq c-1,$
    for all $H\leq_{f}N$ and morphisms $\phi:H\to N.$
    Let now $N$ be a finitely generated, torsion free nilpotent group of class $c,$ $H\leq_f N$ and $\phi: H \to N$ a morphism.
    Consider the following commutative diagram.
    
   \begin{center}
    \begin{tikzpicture}[x=0.8cm,y=0.6cm]
\node[] (1) at (0,0) {$1$};
\node[] (2) at (3,0) {$H_c$};
\node[] (3) at (6,0) {$H$};
\node[] (4) at (9,0) {$\frac{H}{H_c} \cong \frac{H N_c}{N_c} $};
\node[] (5) at (12,0) {$1$};

\draw [->] (1) -- (2);
\draw [->] (2) -- (3);
\draw [->] (3) -- (4);
\draw [->] (4) -- (5);

\node[] (6) at (0,-3) {$1$};
\node[] (7) at (3,-3) {$N_c$};
\node[] (8) at (6,-3) {$N$};
\node[] (9) at (9.4,-3) {$\frac{N}{N_c}  $};
\node[] (10) at (12,-3) {$1$};

\draw [->] (6) -- (7);
\draw [->] (7) -- (8);
\draw [->] (8) -- (9);
\draw [->] (9) -- (10);


\draw[->] (2) -- (7);
\node[] (d) at (3.8,-1.5) {$\phi'=\phi_c$};
\draw[->] (3) -- (8);
\node[] (d) at (6.25,-1.5) {$\phi$};
\draw[->] (9.4,-0.45) -- (9);
\node[] (d) at (9.65,-1.5) {$\bar \phi$};
\end{tikzpicture}
\end{center}

    The group $\frac{N}{N_c} = \overline{N}$ is finitely generated, torsion free and nilpotent of class $c-1$ and has 
    the central series

    \[
        1 \leq \frac{N_{c-1}}{N_c}
        \leq 
        \dots 
        \leq
        \overline{N}_{i} = \frac{N_i}{N_c}
        \leq
        \dots 
        \leq
         \frac{N_2}{N_c}
         \leq \frac{N}{N_c} = \overline{N}.
   \]
    
    \noindent Suppose $R(\phi)=\infty.$
    If $R(\bar\phi) = \infty,$
    then the induction hypothesis implies that
    there exists an $i \in \{ 1,2,\dots, c-1\}$ such that $R(\bar{(\phi)_i})=\infty$ and so
     $R(\phi_i)=\infty,$ by Lemma \ref{R phi bar}.
    If $R(\bar\phi) < \infty,$ then $R(\phi_c)$ must be infinite, 
    since if $R(\phi_c)<\infty,$
    then $\infty = R(\phi) \leq R(\bar \phi) \cdot R(\phi_c) < \infty$
    because of Lemma \ref{connection Reidemeister numbers}(3).
    \\ \\
    Now suppose there exists an $i \in \{1,2,\dots,c\}$ with $R(\phi_i)=\infty.$\\
    If $i\in \{1,2,\dots,c-1\},$
    then it follows from Lemma \ref{R phi bar}
    that $R(\bar{(\phi)_i})=\infty.$
    The induction hypothesis implies that $R(\bar \phi)=\infty,$
    and so $R(\phi)=\infty$ by Lemma \ref{connection Reidemeister numbers}(1).
    \\
    If $i=c$ and $R(\phi_k)<\infty$ for all $k\in\{1,2,\dots,c-1\}$ 
    then, by applying Lemma \ref{eigv 1}
    on the group ${ \overline{N}=}\frac{N}{N_c},$
    it follows that $|\Fix(\bar \phi)| < \infty.$
    By Lemma \ref{connection Reidemeister numbers}(2) we can finally conclude that $R(\phi)=\infty.$
\end{proof}
We can now prove the product formula:
\begin{theorem}\label{R(f) product} 
Let $N$ be a finitely generated, torsion free nilpotent group of nilpotency class $c,$ $H \leq_f N$ and $\phi: H \to N$ a morphism. Define $N_i = \sqrt[N]{ \gamma_i(N)}$ and $H_i = \sqrt[H]{ \gamma_i(H)}$ for each $i \in \{ 1,2,\dots,c\}$ and let $\phi_i: \frac{H_i N_{i+1}}{N_{i+1}} \to \frac{N_i}{N_{i+1}}$ be the induced morphism. Then
$$R(\phi)= \prod_{i=1}^c R(\phi_i).$$
\end{theorem}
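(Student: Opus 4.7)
The plan is to proceed by induction on the nilpotency class $c$ of $N$. The base case $c=1$ is immediate: $N$ is torsion free abelian, $\phi$ coincides with $\phi_1$, and the formula reads $R(\phi)=R(\phi_1)$. For the inductive step, if some $R(\phi_{i_0})=\infty$, then Proposition~\ref{4.3.3 MM} already yields $R(\phi)=\infty$, matching the product, so I may assume from here on that every $R(\phi_i)$ is finite.

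In the finite case I would exploit the central extension $1 \to N_c \to N \to \bar N \to 1$ (and the analogous one for $H$, using $H_c=H\cap N_c$ and the fact that $[N,N_c]\subseteq N_{c+1}=\{1\}$ shows $N_c\le Z(N)$). Since $\bar N = N/N_c$ has nilpotency class $c-1$, the induction hypothesis combined with Lemma~\ref{R phi bar} gives $R(\bar\phi)=\prod_{i=1}^{c-1} R(\phi_i)$, and Lemma~\ref{connection Reidemeister numbers}(3) supplies the upper bound $R(\phi)\le R(\bar\phi)\,R(\phi_c)=\prod_{i=1}^c R(\phi_i)$. To obtain the matching lower bound I would analyse the fibers of the natural surjection $\pi \colon R(\phi) \twoheadrightarrow R(\bar\phi)$. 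Fixing a lift $\beta\in N$ of a class representative $\bar\beta$, a short manipulation using centrality of $N_c$ shows that every element of $\pi^{-1}([\bar\beta])$ has the form $[\beta z]_\phi$ with $z\in N_c$, and that $\beta z_1\sim_\phi \beta z_2$ iff there exists $\gamma\in H$ whose class $\bar\gamma$ lies in $S_{\bar\beta}:=\{\bar\gamma\in\bar H \mid \bar\gamma\bar\beta\bar\phi(\bar\gamma^{-1})=\bar\beta\}$ and such that $z_1 z_2^{-1}=\beta^{-1}\gamma\beta\,\phi(\gamma^{-1})$. A direct computation shows that the map $t\colon\gamma\mapsto\beta^{-1}\gamma\beta\,\phi(\gamma^{-1})$ is a homomorphism on the preimage $\tilde S_{\bar\beta}$ of $S_{\bar\beta}$ in $H$, so the relevant set of differences forms a subgroup $T_\beta$ of $N_c$ with fiber size $[N_c:T_\beta]$.

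The hard part will be proving that $S_{\bar\beta}=\{\bar 1\}$ for every $\bar\beta$, for then only $\gamma\in H_c$ contribute; by centrality, $t(\gamma)=\gamma\phi_c(\gamma)^{-1}$, so $T_\beta=\{\eta\phi_c(\eta)^{-1}\mid\eta\in H_c\}$ and the fiber has exactly $R(\phi_c)$ elements. Summing over the $R(\bar\phi)$ classes would then give $R(\phi)=R(\bar\phi)\,R(\phi_c)=\prod_{i=1}^c R(\phi_i)$, as required. To prove the triviality of $S_{\bar\beta}$, I would extend $\bar\phi$ to the Malcev completion $\bar G$ and form the endomorphism $\bar\phi_\beta\colon\bar G\to\bar G$, $g\mapsto \bar\beta\,\tilde{\bar\phi}(g)\,\bar\beta^{-1}$, noting that $S_{\bar\beta}\subseteq\Fix(\bar\phi_\beta)$. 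Because each graded piece $\bar G_i/\bar G_{i+1}$ is central in $\bar G/\bar G_{i+1}$, conjugation by $\bar\beta$ acts trivially there, so $\bar\phi_\beta$ induces on $\bar G_i/\bar G_{i+1}$ the same matrix $M_i$ as $\bar\phi$ does; the assumption $R(\phi_i)<\infty$ forces $\det(I-M_i)\ne 0$, i.e.\ $M_i$ has no eigenvalue $1$. A standard graded argument---picking a hypothetical nontrivial $g\in\Fix(\bar\phi_\beta)$, letting $i$ be the largest index with $g\in\bar G_i$, and reducing modulo $\bar G_{i+1}$ to produce a nonzero $1$-eigenvector of $M_i$---then yields a contradiction, so $\Fix(\bar\phi_\beta)=\{e\}$ and $S_{\bar\beta}$ is trivial. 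This closes the induction.
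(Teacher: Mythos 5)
Your proposal is correct, and its skeleton coincides with the paper's: induction on $c$, disposal of the infinite case via Proposition~\ref{4.3.3 MM}, reduction to the identity $R(\phi)=R(\bar\phi)R(\phi_c)$ via Lemma~\ref{R phi bar} and the induction hypothesis, and the upper bound $R(\phi)\le R(\bar\phi)R(\phi_c)$ from Lemma~\ref{connection Reidemeister numbers}(3). Where you genuinely diverge is in the counting step. The paper chooses representatives $x_i$ for $\sim_{\phi_c}$ and $y_j$ for $\sim_{\bar\phi}$ and shows the products $x_iy_j$ are pairwise inequivalent by a commutator computation inside $N$: a hypothetical twisted-conjugating element $\gamma\notin H_c$, with $k$ maximal such that $\gamma\in H_k$, yields a nontrivial fixed point of $\phi_k$ modulo $N_{k+1}$, contradicting $R(\phi_k)<\infty$; combined with the upper bound this gives equality. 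You instead compute each fiber of the surjection from the set of $\sim_\phi$-classes onto the set of $\sim_{\bar\phi}$-classes exactly, by observing that $t(\gamma)=\beta^{-1}\gamma\beta\phi(\gamma^{-1})$ is a homomorphism on the preimage of the twisted stabilizer $S_{\bar\beta}$ (this uses centrality of $N_c$ and is correct) and then proving $S_{\bar\beta}=\{\bar 1\}$ by a fixed-point argument on the Malcev completion: conjugation by $\bar\beta$ acts trivially on the graded quotients, so the endomorphism $\bar\phi_\beta$ still induces the matrices $M_i$ there, none of which has eigenvalue $1$. The two triviality arguments are morally the same, both reducing to ``no $\phi_k$ with $k<c$ has a nontrivial fixed point'' (the paper's Lemma~\ref{eigv 1} in twisted form), but your fiber count is exact, so the appeal to Lemma~\ref{connection Reidemeister numbers}(3) becomes redundant in your version, whereas the paper needs it to close the argument. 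The one step you should spell out is that the extension of $\bar\phi$ to the Malcev completion really does induce the matrices $M_i$, viewed as real matrices, on $\gamma_i(\bar G)/\gamma_{i+1}(\bar G)$; this is true (the paper proves the analogous statement in Section 5 for $\phi_{i*}$) but it is not completely free, since $M_i$ is a priori only defined on the finite-index sublattice $B_i\Z^{k_i}$.
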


\begin{proof}
    This follows from Proposition \ref{4.3.3 MM} if $R(\phi)$ or $R(\phi_i)$  is infinite for certain $i.$
    We may thus assume that $R(\phi)$ and all $R(\phi_i)$ are finite.
    \\
    The case $c=1$ is trivial.
    Let $c>1$ and consider $\bar \phi: {\frac{HN_c}{N_c} } \to \frac{N}{N_c}.$
    By Lemma \ref{R phi bar} it suffices to prove that $R(\phi) = R(\phi_c)R(\bar\phi).$
    \\
    If $R(\bar\phi)$ is infinite, then from Lemma \ref{connection Reidemeister numbers}(1) it follows that $R(\phi)=\infty,$ a contradiction, so $R(\bar\phi)$ must be finite.
    \\ Choose $y_1, y_2,\dots, y_{m_1} \in N$ such that 
    $\bar y_1 =y_1N_c,\bar y_2 =y_2N_c, \dots, \bar y_{m_1} =y_{m_1}N_c$
    are representatives for $\sim_{\bar \phi}.$
    Choose representatives $x_1, x_2,\dots, x_{m_2} \in N_c$ 
    for $\sim_{\phi_c}.$
    \\
    We claim that $x_iy_j$ ($i\in\{1,2,\dots,m_2\},j\in\{1,2,\dots,m_1\}$) are representatives for the Reidemeister classes of $\phi.$
    \\ \\
    First we show that all $x_iy_j$ represent different Reidemeister classes of $\phi.$
    Suppose $x_{i_1}y_{j_1} \sim_\phi x_{i_2}y_{j_2},$
    so there exists a $\gamma \in H$ such that
    \begin{align} \label{g}
        x_{i_1}y_{j_1} = \gamma x_{i_2}y_{j_2} \phi(\gamma^{-1}).
    \end{align}
    Projection on $\frac{N}{N_c}$ gives
    $\bar x_{i_1} \bar y_{j_1} = \bar\gamma \bar x_{i_2} \bar y_{j_2} \bar\phi(\bar\gamma^{-1}),$
    so $ \bar y_{j_1} \sim_{\bar\phi}  \bar y_{j_2}.$
    This means that 
    $y_{j_1}N_c \sim_{\bar\phi} y_{j_2}N_c$ and 
    so $j_1 = j_2.$

    If $\gamma\in H_c,$ we get $\phi(\gamma^{-1})\in N_c \leq Z(N)$
    so $ x_{i_1} = \gamma x_{i_2} \phi(\gamma^{-1}).$
    Then $x_{i_1} \sim_{\phi_c} x_{i_2},$ thus $i_1 = i_2.$
    \\
    If $\gamma \notin H_c,$
    let $k \in \{ 1,2,\dots,c-1 \}$ be the biggest $k$ such that $\gamma\in H_k$.
    Since $x_{i_1},x_{i_2} \in N_c \leq Z(N)$,
    from (\ref{g}) it follows that
    \begin{align*}
        x_{i_1}x^{-1}_{i_2} = y_{j_1}^{-1}\gamma y_{j_1} \phi(\gamma^{-1})
        =y_{j_1}^{-1}\gamma y_{j_1} \gamma^{-1}\gamma \phi(\gamma^{-1})
    \end{align*}
    and therefore $ x_{i_1}x_{i_2}^{-1} N_{k+1} =  y_{j_1}^{-1}\gamma y_{j_1} \gamma^{-1}\gamma \phi(\gamma^{-1}) N_{k+1} =  [y_{j_1},\gamma^{-1}] \gamma \phi(\gamma^{-1}) N_{k+1}. $
    Since $[y_{j_1},\gamma^{-1}] \in [N,N_k] \leq N_{k+1}$ and $x_{i_1}x_{i_2}^{-1} \in N_c \leq N_{k+1},$
    we obtain $ N_{k+1} = \gamma \phi(\gamma^{-1}) N_{k+1}$
    and consequently $\gamma N_{k+1}$ is a fixed point of $\phi_k.$
    By Proposition \ref{4.3.3 MM},
    since $R(\phi)<\infty,$
    $\phi_k$ has no non-trivial fixed points.
    It follows that $\gamma N_{k+1} = N_{k+1},$
    so $\gamma\in N_{k+1},$ a contradiction.
    \\
    {
    We find that all $x_i y_j$ represent different Reidemeister classes of $\phi,$ so $R(\phi) \geq m_1 \cdot m_2.$
    Because of Lemma \ref{connection Reidemeister numbers}(3), it holds that $R(\phi) \leq R(\bar \phi)R(\phi_c) = m_1 \cdot m_2.$
    We can conclude that $R(\phi) = m_1 \cdot m_2 = R(\bar \phi)R(\phi_c).$
    
    }
\end{proof}

Using this product formula, we can now express $R(\phi)$ in terms of the index $[N:H]$ and the matrices $M_i$ introduced above. 

\begin{theorem}\label{R(f)}
Let $N$ be a finitely generated, torsion free nilpotent group of nilpotency class $c,$ $H \leq_f N$ and $\phi: H \to N$ a morphism. Define $N_i = \sqrt[N]{ \gamma_i(N)}$ and $H_i = \sqrt[H]{ \gamma_i(H)}$ for each $i \in \{ 1,2,\dots,c\}$ and let $\phi_i: \frac{H_i N_{i+1}}{N_{i+1}} \to \frac{N_i}{N_{i+1}}$ be the induced morphisms with associated matrices $M_i$ as in Remark~\ref{def Mi}. Then
$$R(\phi) = [N:H] \cdot |\det  (I_{\sum k_i} - M)|_\infty \text{ with } 
M = \begin{pmatrix} M_1 & 0 & & 0
\\
0 & M_2 &\cdots &0 
\\ \vdots & \vdots& \ddots & \vdots
\\ 0  & 0 & \cdots  & M_c
\end{pmatrix}\cdot$$
\end{theorem}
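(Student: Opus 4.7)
My plan is to assemble the theorem from three pieces already available: the product formula of Theorem~\ref{R(f) product}, the closed form for each factor given by Proposition~\ref{formule abels} (rephrased in Remark~\ref{def Mi}), and a purely group-theoretic factorisation of the index $[N:H]$ along the isolator filtration. The block-diagonal structure of $M$ will collapse the product of the determinantal factors into a single determinant, which is the only genuinely ``combinatorial'' observation needed.

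First I would apply Theorem~\ref{R(f) product} to write $R(\phi)=\prod_{i=1}^{c} R(\phi_i)$, then substitute
$R(\phi_i) = \big[\frac{N_i}{N_{i+1}}:\frac{H_iN_{i+1}}{N_{i+1}}\big]\,|\det(I_{k_i}-M_i)|_\infty$ from Remark~\ref{def Mi}. This immediately splits the product into an ``index part'' and a ``determinant part''. For the determinant part I would note that, since $M$ is block diagonal with blocks $M_1,\dots,M_c$, one has $\det(I_{\sum k_i}-M)=\prod_{i=1}^{c}\det(I_{k_i}-M_i)$; the $|\cdot|_\infty$ convention behaves correctly under this product (any vanishing factor makes both sides $\infty$, otherwise the absolute values multiply), so $\prod_i |\det(I_{k_i}-M_i)|_\infty = |\det(I_{\sum k_i}-M)|_\infty$.

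The only step that requires a small argument is the identity
\[
[N:H] = \prod_{i=1}^{c} \Big[\tfrac{N_i}{N_{i+1}}:\tfrac{H_iN_{i+1}}{N_{i+1}}\Big].
\]
Here I would exploit $H_i = H\cap N_i$ (from the lemma preceding Lemma~\ref{induced morphism}) together with the third isomorphism theorem to get $[N_i:H_iN_{i+1}] = \big[\tfrac{N_i}{N_{i+1}}:\tfrac{H_iN_{i+1}}{N_{i+1}}\big]$, and the second isomorphism theorem to get $[H_iN_{i+1}:H_i] = [N_{i+1}:H_i\cap N_{i+1}] = [N_{i+1}:H_{i+1}]$. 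Multiplying gives the recursion $[N_i:H_i] = \big[\tfrac{N_i}{N_{i+1}}:\tfrac{H_iN_{i+1}}{N_{i+1}}\big]\cdot[N_{i+1}:H_{i+1}]$, and iterating from $i=1$ down to the trivial base case $[N_{c+1}:H_{c+1}]=1$ yields the desired factorisation (noting $N_1=N$, $H_1=H$).

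Combining the three ingredients gives the formula at once. The only potentially tricky bookkeeping is the case when $R(\phi)=\infty$: here one invokes Proposition~\ref{4.3.3 MM} to know that some $\phi_i$ has eigenvalue $1$, so the corresponding block makes $\det(I_{\sum k_i}-M)=0$ and hence $|\det(I_{\sum k_i}-M)|_\infty=\infty$, while $[N:H]$ is a positive integer, so both sides of the claimed equality are $\infty$ consistently. I expect no serious obstacle; the main conceptual work has already been done in Theorem~\ref{R(f) product}, and the remaining argument is a tidy multiplicativity check.
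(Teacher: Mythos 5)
Your proposal is correct and follows essentially the same route as the paper: product formula from Theorem~\ref{R(f) product}, the per-factor expression from Remark~\ref{def Mi}, collapsing the determinants via block-diagonality, and the index factorisation $[N:H]=\prod_i\bigl[\tfrac{N_i}{N_{i+1}}:\tfrac{H_iN_{i+1}}{N_{i+1}}\bigr]$ proved by the same recursion (the paper packages your two isomorphism-theorem steps into the single identity $[G:S]=\bigl[\tfrac{G}{N}:\tfrac{SN}{N}\bigr]\cdot[N:S\cap N]$). Your handling of the infinite case is also consistent with the paper's.
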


\begin{proof}
    Since the matrix $I_{\sum k_i} - M$ is a block diagonal matrix,
    we have that $\det (I_{\sum k_i} - M) = \prod_{i=1}^c \det (I_{k_i} - M_i),$
    so
    $ |\det (I_{\sum k_i} - M) |_\infty = \prod_{i=1}^c |\det (I_{k_i} - M_i) |_\infty. $
    From Theorem \ref{R(f) product} and Remark \ref{def Mi}, it  follows that
    \begin{align*}\label{R 1e uitdr}
        R(\phi)= \prod_{i=1}^c \bigg[ \frac{N_i}{N_{i+1}} : {  \frac{H_i N_{i+1}}{N_{i+1}} } \bigg] \cdot |\det  (I_{k_i} - M_{i})|_\infty =\bigg( \prod_{i=1}^c \bigg[ \frac{N_i}{N_{i+1}} : {  \frac{H_i N_{i+1}}{N_{i+1}} }  \bigg] \bigg) \cdot  |\det (I_{\sum k_i} - M) |_\infty .
    \end{align*}
    We still have to prove
    \begin{equation}\label{nogTB}
        [N:H] = \prod_{i=1}^c \bigg[ \frac{N_i}{N_{i+1}} : { \frac{H_i N_{i+1}}{N_{i+1}} } \bigg].
    \end{equation}
    For a group $G$ with subgroup $S$ and normal subgroup $N,$ we know that
    \begin{align}\label{product indices}
        \big[G:S\big] = \bigg[ \frac{G}{N} : \frac{SN}{N} \bigg] \cdot [N : S \cap N].
    \end{align} 
    We use that to prove (\ref{nogTB}) by induction.
    Since $H \leq N$ and $N_2 \trianglelefteq N,$ we get
    $$ \big[N:H\big] 
    = \bigg[ \frac{N}{N_2} : {  \frac{HN_2}{N_2} } \bigg] \cdot [N_2 : H \cap N_2] = \bigg[ \frac{N}{N_2} : \frac{HN_2}{N_2} \bigg] \cdot [N_2 : H_2],
    $$
    so (\ref{nogTB}) is proven for $c=2.$
    \\ \\
    Now we assume
    \begin{align}\label{IH}
        &[N:H] \nonumber \\
        &= \bigg[ \frac{N}{N_2} : { \frac{H N_2}{N_2} } \bigg]
                \cdot
                \bigg[ \frac{N_2}{N_3} : { \frac{H_2 N_3}{N_3} } \bigg]
                \cdots
                \bigg[ \frac{N_{k-2}}{N_{k-1}} : { \frac{H_{k-2} N_{k-1} }{N_{k-1}} }\bigg]
                \cdot
                [N_{k-1} : H_{k-1}].
    \end{align}
    Because of (\ref{product indices}), we have
    \begin{align*}
     \big[N_{k-1}:H_{k-1}\big] 
    &= \bigg[ \frac{N_{k-1}}{N_k} : \frac{H_{k-1}N_k}{N_k} \bigg] \cdot [N_k : H_{k-1} \cap N_k]
   \\ &= 
    \bigg[ \frac{N_{k-1}}{N_k} : \frac{H_{k-1}N_k}{N_k} \bigg] \cdot [N_k : H_k].
    \end{align*}
    By substituting in (\ref{IH}), it follows that
    { 
    \begin{align*}
        [N:H]& \\
        = \bigg[ \frac{N}{N_2} &: {  \frac{H N_2}{N_2} } \bigg]
                \cdot
                \bigg[ \frac{N_2}{N_3} : {  \frac{H_2 N_3}{N_3} } \bigg]
                \cdots
                \\
                & \cdots \bigg[ \frac{N_{k-2}}{N_{k-1}} : {  \frac{H_{k-2} N_{k-1} }{N_{k-1}} }\bigg]
                \cdot
                \bigg[ \frac{N_{k-1}}{N_k} : \frac{H_{k-1}N_k}{N_k} \bigg] \cdot [N_k : H_k].
    \end{align*}
    }
    By continuing this until $k=c+1$, we finally obtain \eqref{nogTB}.
\end{proof}

\section{Reidemeister numbers of affine $n$-valued maps}\label{bereken R}

Now we turn to nilmanifolds $\nilk{N}{G}$ again.
Here $N$ is a uniform lattice of a connected and simply connected nilpotent Lie group $G.$ The natural projection $p: G \to \nilk{N}{G}$ is the universal covering of the nilmanifold $\nilk{N}{G}$ and $N,$ acting by left translations on $G,$ is the covering group of the projection.
\medskip \\
As before, an $n-$valued map $f: \nilk{N}{G} \multimap \nilk{N}{G} $ will be written as a single-valued map $f: \nilk{N}{G} \to D_n\left(\nilk{N}{G}\right).$ As explained in Section \ref{sec: fixed point cl}, the map $f$ lifts to a split map
$\bar{f}^*=(\bar{f}^*_1,\bar{f}^*_2,\ldots,\bar{f}^*_n): G\to F_n( G,N), $
i.e. we have a commutative diagram
\[
\xymatrix@C=2.5cm{ G \ar[d]_p \ar[r]^-{\bar{f}^*=(\bar{f}^*_1,\bar{f}^*_2,\ldots,\bar{f}^*_n) }& F_n( G, N) \ar[d]^{p^n}\\
\nilk{N}{G} \ar[r]_-f & D_n\left(\nilk{N}{G}\right). }
\]
Here each $\bar{f}^*_i$ is a self-map of $G.$
\medskip \\
Among the self-maps of $G,$ there are the so-called affine maps. An affine map of $G$ is a map of the form
$h: G \to G : x \mapsto g \varphi(x)$
where $\varphi: G \to G$ is a morphism of Lie groups ($\varphi \in\text{End}(G))$ and $g \in G.$
Note that in the case where $G=\R^n$ (the abelian Lie group), an affine map $h$ of $\R^n$ maps an element $x$ to $g+\varphi(x),$ where $\varphi \in \text{End}(\R^n)$ is a linear map and $g$ is the translational part of $h.$ So this is really a usual affine map of $\R^n.$

\begin{definition} Let $f: \nilk{N}{G} \to D_n\left(\nilk{N}{G}\right)$ be an $n-$valued map and $\bar{f}^*=(\bar{f}^*_1,\bar{f}^*_2,\ldots,\bar{f}^*_n): G \to F_n(G,N)$ be a lifting of $f.$ Then we say that $f$ is an affine $n-$valued map if each $\bar{f}^*_i$ is an affine map of $G.$
\end{definition}

\begin{remark}
The definition of $f$ being affine is independent of the chosen lifting $\bar{f}^*,$ since for another lifting $\bar{f}^{*'}=(\bar{f}^{*'}_1,\bar{f}^{*'}_2,\ldots,\bar{f}^{*'}_n)$ we will have that $\bar{f}^{*'}_i = \alpha \bar{f}^*_j$ for some $j \in \{1,2,\dots,n\}$ and $\alpha \in N.$ As $\bar{f}^*_j$ is assumed to be affine, then composition with a left translation $\alpha$ is still affine.
\end{remark}

\begin{remark}
Recall that we are interested in maps up to homotopy (since Reidemeister numbers and Nielsen numbers are homotopy invariant). From this point of view it makes sense to pay special attention to the class of $n-$valued maps that are affine. Indeed, it is well known that any map on an (infra-)nilmanifold is homotopic to an affine map (see \cite{l}) and it is also known that any $n-$valued map on a circle is homotopic to an affine map (\cite{b}). Although it is no longer true that all $n$-valued maps on a nilmanifold will be homotopic to an affine $n$-valued map, this still holds for a very large class of $n$-valued maps and moreover affine $n$-valued maps are a very natural class of maps to study. 
\end{remark}
From now on, we consider an affine $n-$valued map $f: \nilk{N}{G} \to D_n\left(\nilk{N}{G}\right)$ and we fix a lifting
$\bar{f}^*=(\bar{f}^*_1,\bar{f}^*_2,\ldots,\bar{f}^*_n): G \to F_n(G,N).$
So there are $g_i \in G$ and $\varphi_i \in \text{End}(G)$ ($i \in \{ 1,2,\dots,n \}$) such that $\bar{f}^*_i: G \to G: x \mapsto g_i\varphi_i(x).$ We will use the notation $\bar{f}^*_i = (g_i,\varphi_i)$ from now on.
\medskip \\
Recall that the chosen lifting $\bar{f}^*$ determines a morphism
$\psi_f:N \to N^n \rtimes \Sigma_n$
with
$\psi_f(\gamma)=(\phi_1(\gamma),\phi_2(\gamma), \ldots, \phi_n(\gamma); \sigma_\gamma)$ such that
$
    \bar{f}_i^\ast \circ \gamma = \phi_i(\gamma) \circ \bar{f}_{\sigma_\gamma^{-1}(i)}^\ast.
$
\medskip \\
We now fix an $i$ and consider the stabilizer
$$
H \; (= S_i) = \{ \gamma\in N | \sigma_\gamma(i) = i \} \leq_f N.
$$
We want to compute $R_i(f)$ and so we have to consider the morphism $\phi_i: H \to N.$
So from now onwards $\phi_i$ will play the role of the morphism $\phi$ from Section \ref{sec: R for groups}.
We still use $N_1,N_2,\dots,N_c$ to denote the isolators of the terms of the lower central series of $N$ and the induced morphisms on the factors are now denoted by
$$
(\phi_i)_l : \frac{H_l N_{l+1}}{N_{l+1}} \to \frac{N_l}{N_{l+1}}.
$$
We fix a Malcev basis $a_{1,1}, a_{1,2}, \dots,a_{c,k_c}$ which is compatible with the isolators of the lower central series and use
$
A_{p,q}=\log(a_{p,q})
$
as before.
The choice of this Malcev basis determines an isomorphism
$ \frac{N_l}{N_{l+1}} \cong \Z^{k_l}, $
under which
$\frac{H_l N_{l+1}}{N_{l+1}}$ corresponds with $B_l\Z^{k_l}$
and this allows us to identify $(\phi_i)_l$ with the morphism
$$
(\phi_i)_l: B_l \Z^{k_l} \to \Z^{k_l}: \vec{z} \mapsto M_l \vec{z}
$$
for some $M_l \in \Q^{k_l \times k_l}.$ The element of $M_l$ in the $p^{th}$ row and $q^{th}$ column is denoted $m^{(l)}_{p,q}.$
\medskip \\
As $H\leq_f N,$ we know that the Malcev completion $G$ of $N$ is also the Malcev completion of $H.$ Therefore the morphism $\phi_i: H \to N$ extends uniquely to a morphism $\phi_i: G \to G$ (which we abusively denote by the same symbol).
There is a close relationship between $\phi_i$ and $\varphi_i.$
\medskip
\begin{lemma}\label{phi_i conjugatie}
        Let $i\in \{1,2,\dots,n\}$ and $\bar f^*_i = (g_i,\varphi_i).$
        Then for all $\gamma \in H = \{ \gamma \in N | \sigma_\gamma(i) = i \},$
        it holds that
        $$ \phi_i (\gamma) = g_i \varphi_i(\gamma) g_i^{-1}. $$ 
    \end{lemma}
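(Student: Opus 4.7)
The plan is to unfold the defining relation of $\phi_i$ and $\sigma_\gamma$ recorded in equation~(\ref{f* na gamma}) and then use that the lift-factor $\bar f^*_i$ is affine.

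First I would specialize~(\ref{f* na gamma}) to $\gamma\in H$. Because $\sigma_\gamma(i)=i$ for such $\gamma$, we also have $\sigma_\gamma^{-1}(i)=i$, so the relation collapses to
\[
\bar f^*_i\circ\gamma \;=\; \phi_i(\gamma)\circ \bar f^*_i,
\]
where $\gamma$ on the left denotes left translation by $\gamma$ on $G$ (since $N$ acts on its universal cover $G$ by left translations) and $\phi_i(\gamma)$ on the right likewise denotes left translation by $\phi_i(\gamma)\in N$.

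Next I would substitute the affine form $\bar f^*_i(x)=g_i\varphi_i(x)$ into both sides and evaluate at an arbitrary $x\in G$. Using that $\varphi_i$ is a Lie group morphism, the left-hand side becomes
\[
\bar f^*_i(\gamma x) \;=\; g_i\varphi_i(\gamma x) \;=\; g_i\varphi_i(\gamma)\varphi_i(x),
\]
while the right-hand side becomes
\[
\phi_i(\gamma)\,\bar f^*_i(x) \;=\; \phi_i(\gamma)g_i\varphi_i(x).
\]
Equating these and right-cancelling $\varphi_i(x)$ (taking $x=1_G$ is already enough) yields $g_i\varphi_i(\gamma)=\phi_i(\gamma)g_i$, which rearranges to the desired identity $\phi_i(\gamma)=g_i\varphi_i(\gamma)g_i^{-1}$.

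There is no real obstacle here: the only subtlety worth flagging is that $\phi_i(\gamma)$ really lies in $N$ (so that the conjugate $g_i\varphi_i(\gamma)g_i^{-1}$, a priori just an element of $G$, is automatically in $N$), which is guaranteed by the construction of $\psi_f$ in Section~\ref{sec: fixed point cl}, and the pleasant observation that the statement extends the identity from $H\leq N$ to the whole Malcev completion $G$ after the unique extension of $\phi_i$ is formed, since both sides are continuous homomorphisms-up-to-conjugation that agree on a uniform lattice.
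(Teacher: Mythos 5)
Your proof is correct and follows essentially the same route as the paper's: specialize the relation $\bar f_i^\ast\circ\gamma=\phi_i(\gamma)\circ\bar f^\ast_{\sigma_\gamma^{-1}(i)}$ to $\gamma\in H$, substitute the affine form, use that $\varphi_i$ is a morphism, and cancel to get $g_i\varphi_i(\gamma)g_i^{-1}=\phi_i(\gamma)$. The additional remarks about $\phi_i(\gamma)$ lying in $N$ and about the extension to the Malcev completion are sound but not needed for the lemma as stated.
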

    \begin{proof} For all $\gamma \in N$ we have that 
    $\bar f^*_i \circ \gamma = \phi_i(\gamma)\circ\bar f^*_{\sigma_\gamma^{-1}(i)},$
    so for each $\gamma \in H $ this gives
    $\bar f^*_i \circ \gamma = \phi_i(\gamma)\circ\bar f^*_{i}.$
    This means that 
    $ g_i \varphi_i (\gamma x) = \phi_i(\gamma)g_i \varphi_i(x)$
    for all $x \in G$
    so, since $\varphi_i$ is a morphism, 
    $ g_i \varphi_i (\gamma) \varphi_i(x) = \phi_i(\gamma)g_i \varphi_i(x).$
    We get that
    $ g_i \varphi_i (\gamma) g_i^{-1} = \phi_i(\gamma)$
    for all $\gamma \in H,$
    because $g_i$ and $\varphi_i(x)$ are invertible.
    \end{proof}
    As we have the unique extension $\phi_i: G \to G$ we can consider its differential $\phi_{i*}: \lie \to \lie.$
    \begin{theorem}
    Using the notation above, we have that the matrix expression of $\phi_{i*}$ with respect to the basis $A_{1,1}, A_{1,2},\dots,A_{c,k_c}$ is given by a matrix of the form
    \begin{equation*}
    \begin{pmatrix}
    M_1         &0      & \hdots&       &  0
    \\ *        & M_2   &       &       &0
    \\ \vdots   & *     &\ddots &       &\vdots
    \\ \vdots   &       &       & \ddots&0
    \\ *        &*      &\hdots &   *   &M_c 
    \end{pmatrix}.
    \end{equation*}
    \end{theorem}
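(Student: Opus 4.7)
The plan is to use Lemma~\ref{phi_i conjugatie} to write $\phi_i$, after extension to all of $G$, as the composition $c_{g_i}\circ\varphi_i$ of conjugation by $g_i$ with $\varphi_i$, and then take differentials to obtain $\phi_{i\ast}=\mathrm{Ad}(g_i)\circ\varphi_{i\ast}$. Both $\phi_i$ and $c_{g_i}\circ\varphi_i$ are Lie group morphisms $G\to G$ that agree on the finite-index subgroup $H\leq_f N$, and since $G$ is the Malcev completion of $H$, the uniqueness of the extension forces equality on $G$.

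By Theorem~\ref{veel veeltermen}(4) applied to $\varphi_i$, the matrix of $\varphi_{i\ast}$ in the basis $A_{1,1},\dots,A_{c,k_c}$ is already lower block triangular with diagonal blocks equal to the matrices of the maps induced by $\varphi_i$ on the quotients $\gamma_l(\lie)/\gamma_{l+1}(\lie)\cong\R^{k_l}$. Next I would verify that $\mathrm{Ad}(g_i)$ is itself lower block triangular in this basis, with identity matrices on the diagonal. This is where nilpotency enters: using $\mathrm{Ad}(\exp Y)=\exp(\mathrm{ad}\,Y)$ with $Y=\log g_i$ (a finite sum because $\mathrm{ad}(Y)$ is nilpotent), the fact that $\mathrm{ad}(Y)$ sends $\gamma_l(\lie)$ into $\gamma_{l+1}(\lie)$ implies that $\mathrm{Ad}(g_i)$ preserves every $\gamma_l(\lie)$ and acts trivially on each quotient $\gamma_l(\lie)/\gamma_{l+1}(\lie)$. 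The product of two lower block triangular matrices of compatible block sizes is again lower block triangular, and its $l$-th diagonal block is the product of the $l$-th diagonal blocks; hence the $l$-th diagonal block of $\phi_{i\ast}$ equals the $l$-th diagonal block of $\varphi_{i\ast}$.

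It then remains to identify these diagonal blocks with the matrices $M_l$ from Remark~\ref{def Mi}. Since $c_{g_i}$ acts trivially on $\gamma_l(G)/\gamma_{l+1}(G)$, the maps induced by $\varphi_i$ and by $\phi_i$ on this quotient coincide, and under the identification from Section~\ref{sectie3} this is the same as the map induced on $\gamma_l(\lie)/\gamma_{l+1}(\lie)$. By construction, $M_l$ represents the restriction $(\phi_i)_l\colon B_l\Z^{k_l}\to\Z^{k_l}$ of the induced map of $\phi_i$ on $N_l/N_{l+1}\cong\Z^{k_l}$ to the finite-index sublattice $H_lN_{l+1}/N_{l+1}$. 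Since $B_l\Z^{k_l}$ has full rank in $\R^{k_l}$, the unique $\R$-linear extension of this restriction to $\R^{k_l}\cong\gamma_l(\lie)/\gamma_{l+1}(\lie)$ is still given by the matrix $M_l$, and this is precisely the $l$-th diagonal block.

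The genuine content of the argument is the well-known observation that inner automorphisms of a nilpotent Lie group act trivially on the graded pieces of the lower central series; everything else is bookkeeping to ensure that the four parallel pictures (group vs.\ Lie algebra, $H$ vs.\ $G$) line up under the chosen bases. I do not expect any serious obstacle beyond making these identifications explicit.
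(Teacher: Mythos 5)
Your argument is correct, but it is organized differently from the paper's. The paper proves this theorem by a direct computation that never mentions $\varphi_i$ or $g_i$: it picks a normal finite-index subgroup $H_0\trianglelefteq N$ inside $H$, sets $m=[N:H_0]$ so that $a_{p,q}^m\in H_p$, evaluates $(\phi_i)_p(\bar a_{p,q}^{\,m})$ via $M_p$, lifts this to a statement about $\phi_i(a_{p,q}^m)$ modulo $N_{p+1}$, and then applies $\log$ and divides by $m$ to read off $\phi_{i*}(A_{p,q})\equiv\sum_j m^{(p)}_{j,q}A_{p,j}$ modulo $\gamma_{p+1}(\lie)$; this simultaneously produces the block-triangular shape and the diagonal blocks. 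You instead obtain the shape from Theorem~\ref{veel veeltermen}(4) applied to $\varphi_i$ together with the unipotence of $\mathrm{Ad}(g_i)$ (via $\phi_i=\mu(g_i)\circ\varphi_i$ from Lemma~\ref{phi_i conjugatie} and uniqueness of Malcev extensions), and you identify the diagonal blocks by the abstract observation that a linear map on $\R^{k_l}$ is determined by its restriction to the full-rank sublattice $B_l\Z^{k_l}$, where it agrees with $(\phi_i)_l$. Both steps are sound, and your ``full-rank sublattice'' argument is really the coordinate-free version of the paper's ``raise generators to the $m$-th power'' trick. Note, though, that your detour through $\varphi_i$ and $\mathrm{Ad}(g_i)$ is dispensable: Theorem~\ref{veel veeltermen}(4) applies directly to the unique extension $\tilde\phi_i\colon G\to G$ and already yields the lower block-triangular form with diagonal blocks the maps induced on $\gamma_l(\lie)/\gamma_{l+1}(\lie)$, after which only your last identification step is needed. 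The paper reserves the decomposition $\phi_i=\mu(g_i)\circ\varphi_i$ for the subsequent proof of Theorem~\ref{Formule Reidemeister}, where it is used in the opposite direction to transfer the conclusion from $\phi_{i*}$ to $\varphi_{i*}$; what your version buys is that it makes the role of the affine structure visible already at this stage, at the cost of invoking $\mathrm{Ad}(\exp Y)=\exp(\mathrm{ad}\,Y)$, which the paper's more elementary computation avoids.
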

    \begin{proof}
    Since $H$ is a finite index subgroup of $N,$
    there exists a normal subgroup $H_0$ of finite index in $N$ with $H_0 \leq H.$
    If $m = [N:H_0],$ then $\gamma^m \in H_0 \leq H$ for all $\gamma \in N.$
    Let  $p \in \{1,2,\dots,c \}$ and $q \in \{1,2,\dots,k_p \},$
    then $\log(a^m_{p,q}) = m A_{p,q}.$
    \medskip \\
    Under the isomorphism $N_p/N_{p+1}\cong \Z^{k_p},$
    the element $\bar{a}_{p,q} = a_{p,q} N_{p+1}$ corresponds to the column vector $(0,0,\dots, 0, 1, 0, \dots,0)^T,$ where the 1 is on the $q^{th}$ place.
    It follows that
    $( \phi_i )_p (\bar{a}^m_{p,q}) = \bar{a}_{p,1}^{m\cdot m_{1,q}^{(p)}} \bar{a}_{p,2}^{m\cdot m_{2,q}^{(p)}} \cdots \bar{a}_{p,k_p}^{m\cdot m_{k_p,q}^{(p)}},$
    so there exists an element $n_{p+1} \in N_{p+1}$ such that
    $$ \phi_i  ({a}^m_{p,q}) = {a}_{p,1}^{m\cdot m_{1,q}^{(p)}} {a}_{p,2}^{m\cdot m_{2,q}^{(p)}} \cdots {a}_{p,k_p}^{m\cdot m_{k_p,q}^{(p)}} n_{p+1}.$$
    This implies that
    $
    \phi_{i*} ( m A_{p,q} ) = m ( m_{1,q}^{(p)} A_{p,1} + m_{2,q}^{(p)} A_{p,2} + \cdots + m_{k_p,q}^{(p)} A_{p,k_p}) + x
    $
    for a certain $x \in \gamma_{p+1}(\mathfrak{g}),$ and so
    $$
    \phi_{i*} ( A_{p,q} ) =  m_{1,q}^{(p)} A_{p,1} + m_{2,q}^{(p)} A_{p,2} + \cdots + m_{k_p,q}^{(p)} A_{p,k_p} + x'
    $$
    for an $x' \in \gamma_{p+1}(\mathfrak{g}).$
    
    \end{proof}

    \begin{theorem} \label{Formule Reidemeister}
Let $\nilk{N}{G}$ be a nilmanifold and 
$f: \nilk{N}{G} \to D_n\left(\nilk{N}{G}\right)$ an affine $n-$valued map.
Choose a basic lifting $\bar{f}^*=(\bar{f}^*_1 = (g_1,\varphi_1),\bar{f}^*_2 = (g_2,\varphi_2),  \ldots,\bar{f}^*_n = (g_n,\varphi_n)),$
with $ g_i \in G$ and $\varphi_i \in \text{End}(G)$ inducing a morphism
$\psi_f:N \to N^n \rtimes \Sigma_n$
with
$\psi_f(\gamma)=(\phi_1(\gamma),\phi_2(\gamma), \ldots, \phi_n(\gamma); \sigma_\gamma)$.
We have that for each $i\in \{1,2,\dots,n \}$
$$ R_i(f) = [N : S_i] \cdot|\det ( I - \varphi_{i*} ) |_\infty $$
where $S_i=\{ \gamma \in N\mid \sigma_\gamma(i) = i\}$
and
$$R(f) = \sum_{i=1}^n | \det( I - \varphi_{i*}  ) |_\infty .$$
In the formula above $\det( I - \varphi_{i*}  )$ is computed by using the matrix expression of $\varphi_{i\ast}$ with respect to a chosen basis of $\mathfrak{g}$.
\end{theorem}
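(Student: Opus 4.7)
The plan is to apply Theorem~\ref{R(f)} to the morphism $\phi_i\colon S_i \to N$, identify the resulting determinant with that of $\varphi_{i*}$, and then rewrite the sum over $\sigma$-classes from \eqref{Som Reidemeister Getallen} as a sum over all $i$.

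First, fix $i$ and set $H = S_i$. Theorem~\ref{R(f)} gives
\[
R_i(f) = R(\phi_i) = [N:S_i] \cdot |\det(I_{\sum k_l} - M)|_\infty,
\]
where $M = \operatorname{diag}(M_1,\ldots,M_c)$ collects the matrices of the induced morphisms $(\phi_i)_l$. By the preceding theorem, the matrix of $\phi_{i*}\colon \mathfrak{g} \to \mathfrak{g}$ with respect to the basis $A_{1,1},\ldots,A_{c,k_c}$ is \emph{lower} block triangular with exactly those same blocks $M_1,\ldots,M_c$ on the diagonal. As the determinant of a block-triangular matrix is the product of the determinants of its diagonal blocks,
\[
\det(I - \phi_{i*}) = \prod_{l=1}^c \det(I_{k_l} - M_l) = \det(I_{\sum k_l} - M),
\]
so $|\det(I - \phi_{i*})|_\infty = |\det(I_{\sum k_l} - M)|_\infty$.

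Next, I would replace $\phi_{i*}$ by $\varphi_{i*}$. By Lemma~\ref{phi_i conjugatie}, on $H$ (and therefore by uniqueness of Malcev extensions on all of $G$) we have $\phi_i = c_{g_i} \circ \varphi_i$, where $c_{g_i}$ denotes conjugation by $g_i$; differentiating gives $\phi_{i*} = \operatorname{Ad}(g_i) \circ \varphi_{i*}$. Because $G$ is nilpotent, $[g_i,x] \in \gamma_{l+1}(G)$ for every $x \in \gamma_l(G)$, so $c_{g_i}$ acts trivially on every quotient $\gamma_l(G)/\gamma_{l+1}(G)$, and correspondingly $\operatorname{Ad}(g_i)$ acts trivially on every $\gamma_l(\mathfrak{g})/\gamma_{l+1}(\mathfrak{g})$. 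Thus $\phi_{i*}$ and $\varphi_{i*}$ induce the same map on each graded piece; by Theorem~\ref{veel veeltermen}(4) their matrices are both lower block triangular with identical diagonal blocks, and therefore $\det(I - \phi_{i*}) = \det(I - \varphi_{i*})$. Combined with the previous step this yields the claimed formula for $R_i(f)$.

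For the total $R(f)$, let $i_1,\ldots,i_r$ be representatives of the $\sigma$-classes, so that \eqref{Som Reidemeister Getallen} gives $R(f) = \sum_{k=1}^r R_{i_k}(f)$. Applying \eqref{f* na gamma} with $j = \sigma_\gamma^{-1}(i)$ yields
\[
\bar f^*_j(x) = \phi_i(\gamma)^{-1}\bar f^*_i(\gamma x) = \bigl(\phi_i(\gamma)^{-1} g_i \varphi_i(\gamma)\bigr)\varphi_i(x),
\]
so $\varphi_j = \varphi_i$ whenever $j$ lies in the $\sigma$-orbit of $i$. In particular $|\det(I - \varphi_{j*})|_\infty$ is constant on each $\sigma$-orbit, and orbit-stabilizer gives $|[i_k]| = [N:S_{i_k}]$. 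Therefore
\[
R(f) = \sum_{k=1}^r [N:S_{i_k}]\,|\det(I-\varphi_{i_k*})|_\infty = \sum_{k=1}^r \sum_{j\in [i_k]} |\det(I-\varphi_{j*})|_\infty = \sum_{i=1}^n |\det(I-\varphi_{i*})|_\infty.
\]
The main conceptual step is the second paragraph: one has to verify that replacing $\phi_i$ by $\varphi_i$ does not change the relevant determinant, and this reduces to the characteristic fact that in a nilpotent Lie group, inner automorphisms act trivially on the successive quotients of the lower central series. The remaining work is bookkeeping with the product formula and the orbit count.
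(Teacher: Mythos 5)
Your proof is correct and follows essentially the same route as the paper: apply Theorem~\ref{R(f)} to $\phi_i$ on $S_i$, use the lower block-triangular matrix structure together with $\phi_i=\mu(g_i)\circ\varphi_i$ (so that conjugation acts trivially on the graded quotients) to replace $\phi_{i*}$ by $\varphi_{i*}$ in the determinant, and then convert the sum over $\sigma$-classes into a sum over all $i$ via orbit-stabilizer. The only difference is cosmetic: you explicitly verify that $\varphi_j=\varphi_i$ on each $\sigma$-orbit, a fact the paper leaves implicit here and only proves as a separate lemma in the next section, which if anything makes your final step more self-contained.
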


\begin{proof} As the determinants $\det ( I - \varphi_{i*} )$ are independent of the chosen basis for the vector space $\mathfrak{g}$ we can fix a Malcev basis $a_{1,1}, a_{1,2}, \dots,a_{c,k_c}$ for $N$ which is compatible with the isolators of the lower central series and use the
$
A_{i,j}=\log(a_{i,j}) 
$ as a basis of $\mathfrak{g}$ as before.\\
    Let $i$ be in $\{1,2,\dots,n \}.$
    If we denote conjugation with an element $g\in G$ as
    $\mu(g): G \to G: x \mapsto \mu(g)(x) = g x g^{-1},  $
    then, by Lemma \ref{phi_i conjugatie}, the morphism $\phi_i: G \to G$ is equal to $\phi_i = \mu(g_i) \circ \varphi_i,$
    so $\varphi_i = \mu(g_i^{-1}) \circ \phi_i.$
    It follows that $\varphi_{i*} = \mu(g_i^{-1})_* \circ \phi_{i*}: \mathfrak{g} \to \mathfrak{g} $
    and therefore the matrix expression of $\varphi_{i*}$
    with respect to the basis $A_{1,1}, A_{1,2},\dots,A_{c,k_c}$
    is
    \begin{equation*}
    \begin{pmatrix}
    I_{k_1}     &0          & \hdots&       &  0
    \\ *        & I_{k_2}   &       &       &0
    \\ \vdots   & *         &\ddots &       &\vdots
    \\ \vdots   &           &       & \ddots&0
    \\ *        &*          &\hdots &   *   &I_{k_c} 
    \end{pmatrix}
    \cdot
    \begin{pmatrix}
    M_1         &0      & \hdots&       &  0
    \\ *        & M_2   &       &       &0
    \\ \vdots   & *     &\ddots &       &\vdots
    \\ \vdots   &       &       & \ddots&0
    \\ *        &*      &\hdots &   *   &M_c 
    \end{pmatrix}
    = 
    \begin{pmatrix}
    M_1         &0      & \hdots&       &  0
    \\ *'        & M_2   &       &       &0
    \\ \vdots   & *'     &\ddots &       &\vdots
    \\ \vdots   &       &       & \ddots&0
    \\ *'        &*'      &\hdots &   *'   &M_c 
    \end{pmatrix},
    \end{equation*}
    because conjugation by $g_i^{-1}$ induces the identity on each factor $\gamma_l(G)/\gamma_{l+1}(G)$.\\
    Together with Theorem \ref{R(f)}, this implies that
    $$R_i(f) = [N:S_i] \cdot \Big|\det  \Big(I - 
    \begin{pmatrix} M_1 & 0 & & 0
\\
0 & M_2 &\cdots &0 
\\ \vdots & \vdots& \ddots & \vdots
\\ 0  & 0 & \cdots  & M_c
\end{pmatrix}\Big)\Big|_\infty 
= [N:S_i] \cdot \Big|\det  \Big(I - \varphi_{i*}\Big)\Big|_\infty. $$
Let now
$\bar{f}^*_{i_1},\bar{f}^*_{i_2},\dots,\bar{f}^*_{i_r}$
be representatives for the $\sigma-$classes of $\bar{f}^*.$
As discussed in Section \ref{sec: fixed point cl}, we get
\begin{equation*}
R(f) = \sum_{k=1}^r R_{i_k}(f) 
= \sum_{k=1}^r [N : S_{i_k}] \cdot \Big|\det  \Big(I - \varphi_{i_k*}\Big)\Big|_\infty
=
\sum_{i=1}^n \Big|\det  \Big(I - \varphi_{i*}\Big)\Big|_\infty
,
\end{equation*}
where the last equality holds because the index $[N : S_{i_k}]$ equals the number of elements in the $\sigma-$class of $i_k.$
\end{proof}

\section{Nielsen numbers of affine $n$-valued maps}
To compute the Nielsen number of an $n$-valued affine map $f$, we will decompose $f$ into its irreducible components as described by Staecker in Section 4 of \cite{st}. Let $f:X \to D_n(X)$ be an $n$-valued map. An $m$-valued map $g:X \to D_m(X)$ (with $1\leq m \leq n$) is a submap of $f$ when $g(x) \subseteq f(x)$ for all $x\in X$. The map $f$ is said to be irreducible when $f$ does not have an $m$-valued submap with $m<n$. 

For any $n$-valued map $f:X \to D_n(X)$ there exist $r$ (unique up to order) irreducible $n_i$-valued maps $f_i:X\to D_{n_i}(X)$
with $n_1 + n_2 +\cdots + n_r=n$ for some $r\in \{1,2,\ldots,n\}$, such that 
$f(x) = f_1(x) \cup f_2(x) \cup \cdots \cup f_r(x)$ for all $x\in X$. We will refer to the $f_i$ as the irreducible components of $f$.

Let $\bar f^\ast= (\bar f^\ast_1, \bar f^\ast_2, \ldots, \bar f^\ast_n): \tilde X \to F_n(\tilde X, \pi) $ be a fixed basic lifting of $f$ determining a morphism $\psi_f: \pi \to \pi{^n} \rtimes \Sigma_n: \gamma  \mapsto \psi_f(\gamma) = (\phi_1(\gamma),\phi_2(\gamma), \ldots, \phi_n(\gamma); \sigma_\gamma)$ as before and recall that we divided the lift-factors $\bar f^\ast_i$ into equivalence classes, called $\sigma$--classes, by putting $\bar f^\ast_i \sim \bar f^\ast_j \Leftrightarrow i \sim j \Leftrightarrow \exists \gamma \in \pi: \sigma_\gamma(j) =i$. 
When the $\sigma$--class of $\bar f^\ast_i$ is given by $\{ \bar f^\ast_{j_1} = \bar f^\ast_i, \bar f^\ast_{j_2}, \ldots,\bar f^\ast_{j_{m}} \}$, then 
the map $g: X \to D_m(X): x  \mapsto \{ p (\bar f^\ast_{j_1} (\tilde x)), p (\bar f^\ast_{j_2} (\tilde x)), \ldots, p (\bar f^\ast_{j_m} (\tilde x))\}$, where 
$\tilde x \in \tilde{X}$ is an element with $p(\tilde x)=x$, is a well defined irreducible $m$-valued map which is one of the irreducible components of $f$ and all irreducible components of $f$ are obtained in this way. So the number of irreducible components of $f$ equals the number of $\sigma$--classes.

Note that, when $f_1, f_2, \ldots, f_r $ are the irreducible components of the $n$-valued map $f:X \to D_n(X)$, formula \eqref{Som Reidemeister Getallen} can now also be interpreted as 
\[ R(f) = R(f_1) + R(f_2) + \ldots + R(f_r).\]

We also have the following result.

\begin{proposition}\label{bereken Nielsen} (Corollary 4.7 from \cite{st}) Let $f:X\to D_n(X)$ be an $n$-valued map with irreducible components $f_1, f_2, \ldots, f_r$, then 
\[ N(f)= N(f_1) + N(f_2) + \cdots + N(f_r).\]
    
\end{proposition}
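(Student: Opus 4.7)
My plan is to prove the proposition by exhibiting a natural bijection between the fixed point classes of $f$ and the disjoint union of the fixed point classes of the irreducible components $f_1, f_2, \ldots, f_r$, and then to verify that this bijection preserves the fixed point index of Schirmer, so that essential classes correspond to essential classes.

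First I would recall from Section~\ref{sec: fixed point cl} that the fixed point classes of $f$ are in one-to-one correspondence with the lift-factor classes under $\sim_{lf}$, and that the defining condition $\sigma_\gamma(j)=i$ forces two equivalent lift-factors $\alpha \bar{f}^\ast_i$ and $\beta \bar f^\ast_j$ to belong to the same $\sigma$-class. Hence every lift-factor class is supported inside a single $\sigma$-class. By the description of the irreducible components given just before the proposition, the $\sigma$-class containing $\bar f^\ast_i$ determines exactly one irreducible component $f_k$, and the lift-factors of that $\sigma$-class are precisely the lift-factors of $f_k$ (for an appropriate basic lifting of $f_k$). This yields a natural bijection
\[ \{\text{fixed point classes of } f\} \;\longleftrightarrow\; \bigsqcup_{k=1}^r \{\text{fixed point classes of } f_k\}, \]
and moreover the correspondence is an \emph{equality of subsets} of $X$, since the set $p\Fix(\gamma \bar f^\ast_i)$ depends only on the single lift-factor $\gamma \bar f^\ast_i$, which lives in $f_k$ with identical local behaviour.

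The main step, and the place where care is needed, is to check that Schirmer's index of a fixed point class computed with respect to $f$ agrees with the index of the same subset computed with respect to $f_k$. Schirmer's index is a local invariant: one chooses a sufficiently small neighbourhood on which the $n$-valued map $f$ splits as a disjoint union of $n$ single-valued branches, and assigns to a fixed point class the ordinary fixed point index of the single branch whose graph meets the diagonal at that class. Since this branch is literally one of the branches of $f_k$ as well, the index assigned by $f$ and by $f_k$ must coincide. I expect this local-splitting verification to be the principal obstacle, as it requires carefully unpacking the definitions of the splittings for $f$ and for its submaps and checking their compatibility; however once this is in hand, a class of $f$ is essential exactly when the corresponding class of some $f_k$ is essential.

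Summing over $k$ then yields $N(f) = N(f_1) + N(f_2) + \cdots + N(f_r)$. Since the result is established as Corollary~4.7 of \cite{st}, I would at this stage simply invoke that reference rather than reproduce the full local-index computation, having explained above why the statement is natural from the lift-factor viewpoint developed in Section~\ref{sec: fixed point cl}.
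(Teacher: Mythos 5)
The paper gives no proof of this proposition at all: it is quoted verbatim as Corollary 4.7 of \cite{st}, and your proposal likewise ends by invoking that reference, so the two are essentially the same "proof by citation." Your preliminary sketch --- lift-factor classes live inside single $\sigma$-classes, $\sigma$-classes correspond to irreducible components, and Schirmer's index is local and hence computed from the same branches whether one views them as branches of $f$ or of $f_k$ --- is a correct outline of why the result holds and is consistent with the machinery of Section~\ref{sec: fixed point cl}.
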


We will now apply this to affine $n$-valued maps on nilmanifolds. We start with the following observation.

\begin{lemma} Let $\nilk{N}{G}$ be a nilmanifold and let $f:\nilk{N}{G} \to D_n\left(\nilk{N}{G}\right)$ be an affine $n-$valued map with 
basic lifting $\bar f^\ast =(\bar f^\ast_1, \bar f^\ast_2, \ldots, \bar f^\ast_n)$. If $\bar{f}^*_i = (g_i, \varphi_i)$ and $\bar{f}^*_j= (g_j, \varphi_j)$ are in the same $\sigma-$class, then $\varphi_i = \varphi_j.$ 
\end{lemma}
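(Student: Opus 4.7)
The plan is to use relation \eqref{f* na gamma} combined with the assumption that being in the same $\sigma$-class means there exists $\gamma\in N$ with $\sigma_\gamma(j)=i$, and then exploit the fact that affine maps are determined by their value at a point and their linear part.

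Concretely, I would first translate the hypothesis. Since $\bar f^*_i$ and $\bar f^*_j$ lie in the same $\sigma$-class, by definition there is a $\gamma\in N$ with $\sigma_\gamma(j)=i$, equivalently $\sigma_\gamma^{-1}(i)=j$. Plugging this into \eqref{f* na gamma} yields the pointwise identity
\[
\bar f^*_i\circ\gamma \;=\; \phi_i(\gamma)\circ \bar f^*_j
\]
on all of $G$. Writing $\bar f^*_i=(g_i,\varphi_i)$ and $\bar f^*_j=(g_j,\varphi_j)$ and using that $\varphi_i,\varphi_j$ are Lie group morphisms, this becomes
\[
g_i\,\varphi_i(\gamma)\,\varphi_i(x) \;=\; \phi_i(\gamma)\,g_j\,\varphi_j(x)\qquad\text{for all } x\in G.
\]

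Next I would evaluate at $x=1_G$; since $\varphi_i(1_G)=1_G=\varphi_j(1_G)$, this gives
\[
g_i\,\varphi_i(\gamma) \;=\; \phi_i(\gamma)\,g_j,
\]
so $\phi_i(\gamma)\,g_j = g_i\,\varphi_i(\gamma)$. Substituting this identity back into the displayed equation cancels the common left factor $g_i\varphi_i(\gamma)$, leaving $\varphi_i(x)=\varphi_j(x)$ for every $x\in G$, i.e.\ $\varphi_i=\varphi_j$.

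There is essentially no obstacle here: the whole argument is a direct application of \eqref{f* na gamma} together with the structural property that an affine map $h(x)=g\varphi(x)$ is uniquely decomposed into its translational part $h(1_G)$ and its morphism part $\varphi$. The only thing to be slightly careful about is tracking the convention $\sigma_\gamma(j)=i$ versus $\sigma_\gamma^{-1}(i)=j$ when invoking \eqref{f* na gamma}, but this is a bookkeeping matter rather than a genuine difficulty.
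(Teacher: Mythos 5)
Your argument is correct and is essentially identical to the paper's own proof: both use relation \eqref{f* na gamma} with a $\gamma$ satisfying $\sigma_\gamma^{-1}(i)=j$, evaluate at $x=1_G$ to get $g_i\varphi_i(\gamma)=\phi_i(\gamma)g_j$, and then cancel to conclude $\varphi_i=\varphi_j$. No issues.
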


\begin{proof}
    The lift-factors $\bar{f}^*_i$ and $\bar{f}^*_j$ are in the same $\sigma-$class
    if and only if
    there exists a $\gamma \in N$ such that $\sigma_\gamma^{-1} (i) = j.$
    For the element $\gamma$ it holds that
    $\bar{f}^*_i \circ \gamma = \phi_i(\gamma) \circ \bar{f}^*_j.$
    Let $\gamma' = \phi_i(\gamma).$
    Then for all $x \in G$ we have that
    $(\bar{f}^*_i \circ \gamma) (x) = (\gamma' \circ \bar{f}^*_j) (x) ,$
    so
    $g_i \varphi_i (\gamma x) = \gamma' g_j \varphi_j(x).$
    Taking $x=1$ we get $g_i \varphi_i (\gamma) = \gamma' g_j.$
    It follows that $g_i \varphi_i(\gamma x) = g_i \varphi_i(\gamma ) \varphi_i( x) = \gamma' g_j \varphi_i(x)$
    and so
    $ g_i \varphi_i(\gamma x) = \gamma' g_j \varphi_j(x)  $
    reduces to
    $ \gamma' g_j \varphi_i(x) = \gamma' g_j \varphi_j(x),$
    from which we get that
    $\varphi_i(x) = \varphi_j(x)$ for all $x\in G$ so $\varphi_i = \varphi_j.$
\end{proof}

\begin{remark}
 Note that when $f$ is an irreducible affine $n$-valued map, there is only one $\sigma$--class and hence all the linear parts $\varphi_i$ of all lift-factors are equal to each other and so we will just need one endomorphism $\varphi=\varphi_i\in {\text{End}}(G)$.  By Proposition~\ref{bereken Nielsen} it is enough to be able to compute the Nielsen number of irreducible affine $n$-valued maps. In order to achieve this, we will make a distinction in two cases, depending on whether or not the differential $\varphi_\ast$ of the linear part has an eigenvalue 1 or not. 
\end{remark}

Before we start considering the two cases, we prove one more result that will be useful for both.

\begin{lemma} \label{psi bijective}
    Let $G$ be a connected and simply connected nilpotent Lie group.
    Suppose that $\varphi \in \text{End}(G)$ is such that $\varphi_*\in \text{End}(\mathfrak{g})$ does not have eigenvalue 1.
    Then the map
    $\psi : G \to G: x \mapsto \varphi(x)x^{-1} $ is bijective.
\end{lemma}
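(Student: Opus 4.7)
The plan is to induct on the nilpotency class $c$ of $G$, using the center as the device to reduce dimension. The base case $c=1$ is immediate: $G$ is abelian and simply connected, so $G\cong (\lie,+)$ via $\exp$, the map $\psi$ becomes $\varphi_\ast - I$ acting on $\lie$, and the hypothesis that $1$ is not an eigenvalue of $\varphi_\ast$ gives bijectivity.

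For the inductive step, I would set $Z=\gamma_c(G)$, a closed central characteristic subgroup, and consider the short exact sequence $1\to Z\to G\to G/Z\to 1$. Since $Z$ is characteristic, $\varphi$ restricts to an endomorphism $\varphi|_Z$ of $Z$ and descends to an endomorphism $\bar\varphi$ of $G/Z$; on the Lie algebra level the eigenvalues of $(\varphi|_Z)_\ast$ and $\bar\varphi_\ast$ together exhaust those of $\varphi_\ast$, so neither has $1$ as an eigenvalue. By the induction hypothesis the analogous map $\bar\psi:G/Z\to G/Z$ is bijective, and by the base case applied to the abelian group $Z$, the restriction $\psi|_Z:Z\to Z$ is bijective too.

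For surjectivity of $\psi$, given $y\in G$ I would first find the unique $\bar x_0\in G/Z$ with $\bar\psi(\bar x_0)=\bar y$, lift it to some $x_0\in G$, and write $\varphi(x_0)x_0^{-1}=yz$ for some $z\in Z$. Then for any $w\in Z$ I compute
\begin{equation*}
\varphi(x_0 w)(x_0 w)^{-1}=\varphi(x_0)\varphi(w)w^{-1}x_0^{-1}=\varphi(x_0)x_0^{-1}\varphi(w)w^{-1}=yz\cdot\psi|_Z(w),
\end{equation*}
where centrality of $Z$ is used to move $\varphi(w)w^{-1}\in Z$ across $x_0^{-1}$. Choosing $w\in Z$ with $\psi|_Z(w)=z^{-1}$, which exists by bijectivity of $\psi|_Z$, gives $\psi(x_0w)=y$. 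For injectivity, if $\psi(x)=\psi(x')$ then projection to $G/Z$ and bijectivity of $\bar\psi$ force $x'=xz$ for some $z\in Z$; the same centrality computation yields $\psi(x')=\psi(x)\,\psi|_Z(z)$, so $\psi|_Z(z)=1$, hence $z=1$ by injectivity on $Z$.

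The only delicate point, and the step I would write out most carefully, is the identity $\psi(x_0 w)=\psi(x_0)\psi|_Z(w)$ for $w\in Z$. It is not a homomorphism property of $\psi$, but a consequence of the fact that $\varphi(w)w^{-1}$ lies in the center and therefore commutes past $x_0^{-1}$; once this is observed, both existence and uniqueness fall out and the induction closes.
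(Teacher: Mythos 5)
Your proposal is correct and follows essentially the same route as the paper: induction on the nilpotency class, reducing modulo the central subgroup $\gamma_c(G)$, handling the abelian base case via $\varphi_\ast - I$, and using centrality of $\gamma_c(G)$ to establish the identity $\psi(xw)=\psi(x)\,\psi|_{\gamma_c(G)}(w)$ for $w\in\gamma_c(G)$, from which surjectivity and injectivity both follow. No gaps.
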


\begin{proof}
    We prove this by induction on the nilpotency class $c$ of $G.$
    If $c=1,$ then $G \cong \R^k$ and $\varphi = \varphi_*.$
    In this case, the map $\psi $ is $\varphi - \text{Id}_{\R^k},$
    which is bijective because $\det(\varphi - \text{Id}_{\R^k}) \neq 0.$
    Suppose $c>1$ and $\psi$ is bijective if $G$ has nilpotency class $c-1.$
    Note that $\varphi( \gamma_c(G) ) \subseteq \gamma_c(G),$
    so $\psi$ induces a map
    $$
    \bar{\psi} : \frac{G}{\gamma_c(G)} \to \frac{G}{\gamma_c(G)}:
    x \gamma_c(G) \mapsto \varphi(x) x^{-1} \gamma_c(G). 
    $$
    In fact $ \bar{\psi} $ is the map corresponding to the endomorphism
    $$
    \bar{\varphi} : \frac{G}{\gamma_c(G)} \to \frac{G}{\gamma_c(G)} : x \gamma_c(G) \mapsto \varphi(x) \gamma_c(G)
    $$
    and so $\bar{\psi}$ is bijective because $\bar \varphi_*$ also does not have 1 as an eigenvalue.
    Let $\varphi' = \varphi_{\mkern 4mu \vrule height 2ex\mkern4mu \gamma_c(G)},$
    then $\varphi' ( = \varphi'_*)$ does not have 1 as an eigenvalue
    and also
    $$
    {\psi'} : {\gamma_c(G)} \to {\gamma_c(G)}:
    x  \mapsto \varphi'(x) x^{-1} 
    $$
    is bijective.
    To prove the surjectivity of $\psi,$
    suppose $y \in G.$
    Then there exists an $x' \gamma_c(G)$ such that
    $ \bar{\psi} ( x' \gamma_c(G) ) = y \gamma_c(G) .$
    This implies that 
    $ \varphi(x') x'^{-1} = y z $
    for some $z \in \gamma_c(G).$
    Since $\psi'$ is bijective,
    there exists a $z_1 \in \gamma_c(G)$
    with $\varphi(z_1) z_1^{-1} = z^{-1}.$
    Since $z_1, \varphi(z_1) \in \gamma_c(G) \subseteq Z(G),$ we have
    $$
    \varphi(x' z_1) (x' z_1)^{-1}
    =
    \varphi(x') x'^{-1} \varphi(z_1) z_1^{-1}
    =
    yzz^{-1}
    =
    y.
    $$ 
    Hence for $x = x'z_1$
    we have that
    $\psi(x) = y,$
    showing that $\psi$ is surjective.
    To prove the injectivity of $\psi,$
    suppose $\psi(x) = \psi(y).$
    Then $\varphi(x) x^{-1} = \varphi(y) y^{-1}$ so 
    $\varphi(x) x^{-1} \gamma_c(G) = \varphi(y) y^{-1} \gamma_c(G).$
    Since $\bar \psi$ is bijective,
    it follows that
    $ x \gamma_c(G) = y \gamma_c(G) $
    and so
    $x = yz$ for some $z \in \gamma_c(G).$
    But then
    $$
    \varphi(x) x^{-1}
    =
    \varphi(yz) (yz)^{-1}
    =
    \varphi(y) y^{-1} \varphi(z) z^{-1}
    =
    \varphi(x) x^{-1} \varphi(z) z^{-1},
    $$
    which implies that $\varphi(z) z^{-1} = 1.$
    It follows that $z=1$ because $\psi'$ is bijective
    and consequently $x=y,$
    which was to be shown.
\end{proof}

\subsection{The Nielsen number in case $\varphi_{\ast}$ has eigenvalue 1} 
In this subsection we will show that $f$ is homotopic to an $n$-valued map without fixed points, from which we will be able to conclude that $N(f)=0$.

 \begin{lemma}\label{epsilon} Let $l$ be a positive integer.
    For each $i \in \{ 1,2,\dots,l\},$
    let
    $h_i:\R^n \to \R$ 
    be a map.
    Then there exists an $\epsilon \in \R$
    such that
    $$ h_i(z_1, z_2, \dots, z_n) + \epsilon \neq 0$$
    for all
    $z_1, z_2, \dots, z_n \in \Z$
    and all $i \in \{ 1,2,\dots,l\}.$
    \end{lemma}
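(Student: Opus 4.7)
The plan is to observe that the lemma is really a countability statement, not an analytic one: no continuity or regularity of the $h_i$ is used or needed, and the restriction of the inputs to $\mathbb{Z}^n$ is the crucial feature.

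First I would identify the set of "forbidden" shifts explicitly. An $\epsilon \in \mathbb{R}$ fails the conclusion if and only if there exist $i \in \{1,2,\dots,l\}$ and $(z_1,\dots,z_n) \in \mathbb{Z}^n$ with $h_i(z_1,\dots,z_n) + \epsilon = 0$, i.e. $\epsilon = -h_i(z_1,\dots,z_n)$. Hence the set of bad $\epsilon$ is
\[
S \;=\; \bigcup_{i=1}^{l} \{\, -h_i(z_1,z_2,\dots,z_n) \;:\; (z_1,z_2,\dots,z_n) \in \mathbb{Z}^n \,\}.
\]

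Next I would invoke countability. The set $\mathbb{Z}^n$ is countable, and the union runs over finitely many indices $i \in \{1,\dots,l\}$, so $S$ is a countable union of singletons, hence countable. Since $\mathbb{R}$ is uncountable, the complement $\mathbb{R} \setminus S$ is nonempty, and I simply pick any $\epsilon \in \mathbb{R} \setminus S$. By construction $h_i(z_1,\dots,z_n) + \epsilon \neq 0$ for every such $i$ and every integer tuple, which is exactly the claim.

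There is no real obstacle to overcome here; the statement holds for completely arbitrary set-theoretic functions $h_i$, and the only ingredient is the cardinality comparison $|\mathbb{Z}^n \times \{1,\dots,l\}| < |\mathbb{R}|$. The lemma will be used in the sequel to perturb an affine map by a small translation $\epsilon$ so as to remove all lattice-point fixed points simultaneously, which is why the statement is phrased as avoiding zeros on $\mathbb{Z}^n$ rather than on all of $\mathbb{R}^n$.
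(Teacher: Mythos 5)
Your proof is correct and essentially identical to the paper's: both arguments observe that the set of values $h_i(z_1,\dots,z_n)$ over all $i$ and all integer tuples is countable (a finite union of countable sets) and choose $\epsilon$ to avoid it, the only cosmetic difference being whether one negates the set or negates the chosen element.
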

    
    \begin{proof}
        The set
        $$
        A = \bigcup_{i=1}^l \{ h_i(z_1, z_2, \dots, z_n) | z_j \in \Z\} 
        $$
        is countable, so there exists an $r \in \R$ such that $r \notin A.$
        Let $\epsilon = -r.$
        If there should be an $i\in \{1,2,\dots,l\}$ and $z_1, z_2, \dots, z_n \in \Z$ with 
        $h_i(z_1, z_2, \dots, z_n) + \epsilon = 0,$
        then $h_i(z_1, z_2, \dots, z_n) = r,$ so $r \in A,$ a contradiction. 
        Therefore we found an $\epsilon \in \R$ such that for all $i\in \{1,2,\dots,l\}$ and for all $z_1, z_2, \dots, z_n \in \Z:$
        $$
        h_i(z_1, z_2, \dots, z_n) + \epsilon \neq 0.
        $$
    \end{proof}

\begin{lemma}\label{alpha for eig 1} Let $G$ be a connected and simply connected $c$-step nilpotent Lie group.
    Let $\varphi \in \text{End}(G) $ with $\varphi_*$ having eigenvalue 1.
    Let $N$ be a lattice of $G$ and $g_1, g_2, \dots, g_n \in G.$
    Then there exists an $\alpha \in G$ such that
    $g_k \varphi(x) \alpha \neq \gamma x$
    for all $x \in G, \gamma \in N$ and $k=1,2,\dots,n.$
\end{lemma}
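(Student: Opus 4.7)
The plan is to exhibit the required $\alpha$ by a measure-theoretic argument. Note that $g_{k}\varphi(x)\alpha=\gamma x$ is equivalent to $\alpha=\varphi(x)^{-1}g_{k}^{-1}\gamma x$, so the set of ``bad'' values of $\alpha$ is
\[
S\;=\;\bigcup_{k=1}^{n}\bigcup_{\gamma\in N}\Phi_{k,\gamma}(G),
\qquad
\Phi_{k,\gamma}\colon G\to G,\ x\longmapsto\varphi(x)^{-1}g_{k}^{-1}\gamma x.
\]
Since $N$ is countable (it is a lattice in $G$) and $k$ ranges over a finite set, it suffices to show that each image $\Phi_{k,\gamma}(G)$ has Lebesgue measure zero in $G$; then $S$ is itself a null set, and any $\alpha$ in its (dense, nonempty) complement does the job.

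To verify this, I will compute the differential of $\Phi_{k,\gamma}$ at an arbitrary $x_{0}\in G$ and invoke Sard's theorem. Writing $w_{0}:=\Phi_{k,\gamma}(x_{0})$ and manipulating
\(
\Phi_{k,\gamma}(x_{0}\exp(tX))=\exp(-t\varphi_{\ast}(X))\,w_{0}\,\exp(tX)
=w_{0}\exp\bigl(t(I-\mathrm{Ad}(w_{0}^{-1})\varphi_{\ast})(X)+O(t^{2})\bigr)
\)
via the identity $w_{0}^{-1}\exp(Y)w_{0}=\exp(\mathrm{Ad}(w_{0}^{-1})Y)$ and a first-order BCH expansion, left-trivialization identifies $(\Phi_{k,\gamma})_{\ast,x_{0}}$ with
\[
I-\mathrm{Ad}(w_{0}^{-1})\circ\varphi_{\ast}\colon\mathfrak{g}\to\mathfrak{g}.
\]
The key structural remark is that, since $\mathfrak{g}$ is nilpotent, $\mathrm{ad}(Y)$ shifts the lower central filtration strictly upward for every $Y\in\mathfrak{g}$, and hence $\mathrm{Ad}(w_{0}^{-1})=\exp(\mathrm{ad}(-\log w_{0}))$ acts as the identity on every graded piece $\gamma_{i}(\mathfrak{g})/\gamma_{i+1}(\mathfrak{g})$. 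In the Malcev basis $A_{1,1},\dots,A_{c,k_{c}}$ this forces $\mathrm{Ad}(w_{0}^{-1})\circ\varphi_{\ast}$ to be block lower-triangular with exactly the same diagonal blocks $M_{1},\dots,M_{c}$ as $\varphi_{\ast}$ (cf.\ Theorem~\ref{veel veeltermen}(4)), so
\[
\det\bigl(I-\mathrm{Ad}(w_{0}^{-1})\circ\varphi_{\ast}\bigr)\;=\;\prod_{i=1}^{c}\det(I_{k_{i}}-M_{i})\;=\;\det(I-\varphi_{\ast})\;=\;0,
\]
using the hypothesis that $\varphi_{\ast}$ has eigenvalue $1$. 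Every $x_{0}\in G$ is therefore a critical point of $\Phi_{k,\gamma}$, and Sard's theorem yields that $\Phi_{k,\gamma}(G)$ has measure zero.

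Taking the countable union, $S$ is then a null set and any $\alpha\in G\setminus S$ satisfies the conclusion. The main work will be the differential computation together with the observation that $\mathrm{Ad}(w_{0}^{-1})$ acts trivially on the associated graded of the lower central series --- equivalently, that the perturbation $\varphi_{\ast}\rightsquigarrow\mathrm{Ad}(w_{0}^{-1})\varphi_{\ast}$ leaves the characteristic polynomial unchanged; once this is in place, Sard's theorem finishes the argument at once. A purely elementary variant would note that in Malcev coordinates $\Phi_{k,\gamma}$ is polynomial with everywhere vanishing Jacobian determinant, and conclude by induction on the nilpotency class $c$: either $\bar\varphi_{\ast}$ on $G/\gamma_{c}(G)$ already has eigenvalue $1$ (apply the inductive hypothesis and lift), or the eigenvalue $1$ lives only in $M_{c}$, in which case Lemma~\ref{psi bijective} makes the induced map $\bar x\mapsto\bar\varphi(\bar x)\bar\alpha\bar x^{-1}$ bijective on $G/\gamma_{c}(G)$ and reduces the problem to finding $\alpha$ inside a suitable translate of $\gamma_{c}(G)\cong\mathbb{R}^{k_{c}}$ that avoids a countable union of translates of a proper subspace.
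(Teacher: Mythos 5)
Your proof is correct, but it takes a genuinely different route from the paper's. The paper reduces to the quotient $G/\gamma_{r+1}(G)$ where $r$ is the first level at which $M_r$ has eigenvalue $1$, chooses a basis of $\gamma_c(G)$ adapted to a left eigenvector of $M_c$ for the eigenvalue $1$, and then produces $\alpha=v_1^{\epsilon}$ explicitly on a central one-parameter subgroup: Lemma~\ref{psi bijective} applied to the quotient pins down $x$ modulo $\gamma_c(G)$ in terms of $(k,\gamma)$, so the fixed-point equation collapses to countably many scalar conditions $h_k(z)+\epsilon=0$, which Lemma~\ref{epsilon} lets one avoid. You instead observe that the bad set of $\alpha$'s is $\bigcup_{k,\gamma}\Phi_{k,\gamma}(G)$ with $\Phi_{k,\gamma}(x)=\varphi(x)^{-1}g_k^{-1}\gamma x$, compute the left-trivialized differential to be $I-\mathrm{Ad}(w_0^{-1})\circ\varphi_*$, note that $\mathrm{Ad}(w_0^{-1})$ is unipotent with respect to the lower central filtration so this operator has the same graded blocks $I_{k_i}-M_i$ as $I-\varphi_*$ and hence vanishing determinant everywhere, and conclude by Sard (every point is critical, so the image is null) plus countability of $N$. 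All the steps check out: the differential computation is right, $\Phi_{k,\gamma}$ is smooth (indeed polynomial in Malcev coordinates), and the easy equidimensional case of Sard applies. Your argument is shorter, avoids the reduction to $r=c$ and the eigenvector gymnastics, and proves the stronger statement that almost every $\alpha\in G$ works; the paper's argument is more elementary (no measure theory beyond countability of a subset of $\R$) and locates $\alpha$ on an explicit central one-parameter subgroup, which makes the subsequent fixed-point-free homotopy $\alpha^t$ very concrete — though, as you implicitly use, $\alpha^t=\exp(t\log\alpha)$ makes sense for any $\alpha\in G$, so nothing downstream is lost.
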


\begin{proof}
    Let $a_{1,1},a_{1,2},\dots, a_{c,k_c} $ be a Malcev basis as usual and use the matrices $M_i$ for representing the induced endomorphism
    on $\gamma_i(\mathfrak{g})/\gamma_{i+1}(\mathfrak{g})$ as before.
    Since $\varphi_*$ has eigenvalue 1, there is a matrix $M_i$ with eigenvalue 1.
    Let $r$ be the smallest index such that $M_r$ has eigenvalue 1.
    Without loss of generality, we may assume that $r=c,$
    because we can prove the lemma for $\frac{G}{\gamma_{r+1}(G)}$ and then it follows for $G.$
    So we assume $G$ is $c$-step nilpotent, $M_c$ has eigenvalue 1 and $M_i$ does not have eigenvalue 1 for $1\leq i \leq c-1$.
    Since $\gamma_c(G) \cong \R^{k_c},$ we can find a vector space basis
    $\{v_1, v_2, \dots, v_{k_c} \}$ of $\R^{k_c}$ such that the matrix expression of $\varphi_c$ is of the form
    \begin{equation*}
    \left(
    \begin{array}{c|ccc}
    1           &0          & \hdots    &  0 
    \\ \hline * & *         &   \hdots  &*
    \\ \vdots   &  \vdots   &\ddots     &\vdots
    \\ *        &*          &\hdots     &*
    \end{array}
    \right)
    \end{equation*}
    with respect to this basis.
    Let
    $V = \text{span}\{v_1\}$
    and
    $U = \text{span}\{v_2,v_3, \dots, v_{k_c} \},$
    then $U$ and $V$ are subgroups of $\gamma_c(G)$ and hence of $G$
    and so we will keep using the multiplicative notation.
    We get $ \gamma_c(G) = VU$ and $V = \{ v_1^r | r \in \R \}.$
    Note that by the choice of basis we have that
    $\varphi(v_1) = \varphi_c(v_1) = v_1 u$ for some $u$ in $U$
    and
    $\varphi(u) = \varphi_c(u) \subseteq U.$
    We claim that for a suitable choice of $\epsilon$
    we can take $\alpha = v_1^\epsilon.$
    Suppose that $ x = a_1^{x_1} a_2^{x_2} \cdots a_c^{x_c}, \gamma = a_1^{z_1}a_2^{z_2} \cdots a_c^{z_c} $
    and $k$ satisfy
    $g_k \varphi(x) v_1^\epsilon = \gamma x.$
    Then, by considering the equation modulo $\gamma_c(G),$
    we get that
    $\bar{g}_k \bar{\varphi}(\bar{x}) = \bar{\gamma} \bar{ x}.$
    It follows that
    \begin{equation}\label{bar phi}
        \bar{\varphi}(\bar{x})\bar{x}^{-1} = \bar{g}_k^{-1} \bar{\gamma}.
    \end{equation}
    From Lemma \ref{psi bijective},
    it follows that
    $\bar{x} = \bar{a}_1^{x_1} \bar{a}_2^{x_2} \cdots \bar{a}_{c-1}^{x_{c-1}} $
    is uniquely determined by $\bar{g}_k$  and $\bar{\gamma}.$
    So $x_1, x_2,\dots, x_{c-1}$ are uniquely determined by $k$ and $z_1, z_2, \dots, z_{c-1}.$
    This implies that $x' = a_1^{x_1} a_2^{x_2} \cdots a_{c-1}^{x_{c-1}}$
    is uniquely defined by $k$ and $z_1, z_2, \dots, z_{c-1}$
    and $x = x' a_c^{x_c}.$
    We can write
    $a_c^{x_c}$ as $a_c^{x_c} = v_1^{\alpha_1} u_1$
    with $\alpha_1 \in \R$ and $u_1 \in U.$
    Now (\ref{bar phi}) implies that
    $ \varphi(x') x'^{-1} = g_k^{-1} \gamma v_1^{\alpha_2} u_2 $
    with $\alpha_2 \in \R$ and $u_2 \in U.$
    Remark that also $\alpha_2$ is now completely determined by $k$ and $z_1, z_2, \dots, z_c$ because $x'$ is.
    So $\alpha_2 = h_k(z_1,z_2,\dots,z_c)$ for some map
    $h_k : \R^{k_1 + k_2 + \cdots + k_c} \to \R.$
    The equation
    $g_k \varphi(x) v_1^\epsilon = \gamma x$
    can now be written as
    \begin{equation} \label{Ak}
        g_k \varphi(x' v_1^{\alpha_1} u_1) v_1^\epsilon = \gamma x' v_1^{\alpha_1} u_1.
    \end{equation}
    Note that
    $ \varphi(v_1^{\alpha_1} u_1 ) = v_1^{\alpha_1} u_3  $
    for some $u_3 \in U$
    and so (\ref{Ak}) can be written as
    \begin{align*}
        &g_k \varphi(x') v_1^{\alpha_1} u_3 v_1^\epsilon = \gamma x' v_1^{\alpha_1} u_1
        \\
        \iff
        &\varphi(x') x'^{-1} = g_k^{-1} \gamma v_1^{-\epsilon} u_4 \text{ for some } u_4 \in U
        \\
        \iff
        &g_k^{-1} \gamma v_1^{\alpha_2} u_2 = g_k^{-1} \gamma v_1^{-\epsilon} u_4
        \\
        \iff
        &v_1^{\alpha_2} u_2 = v_1^{-\epsilon} u_4 \\
        \iff & v_1^{h_k(z_1,z_2,\ldots,z_c)} u_2 = v_1^{-\epsilon} u_4. \\
    \end{align*}
    We know by Lemma \ref{epsilon} that we can choose $\epsilon$
    in such a way that none of the equations
    $h_k(z_1,\dots,z_c) + \epsilon =0$
    is satisfied, which proves the lemma.
\end{proof}

\begin{theorem}\label{case 1}
    Let $f: \nilk{N}{G} \to D_n\left(\nilk{N}{G}\right)$
    be an irreducible affine $n-$valued map
    with basic lifting
    $\bar{f}^* = (g_1 \varphi, g_2 \varphi, \dots, g_n \varphi),$
    where $g_1, g_2, \dots, g_n \in G$ and $\varphi \in \text{End}(G).$
    If $\varphi_*$ has eigenvalue 1, then $N(f)=0.$
\end{theorem}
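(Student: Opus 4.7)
The plan is to exhibit a homotopy from $f$ to a fixed-point-free $n$-valued map, which immediately forces every fixed point class of $f$ to be empty, hence inessential, so that $N(f)=0$. The main tool will be Lemma~\ref{alpha for eig 1}, whose conclusion is tailor-made for exactly this situation.

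Concretely, I first apply Lemma~\ref{alpha for eig 1} to the data $G,\varphi,N,g_1,g_2,\ldots,g_n$ of the basic lifting of $f$. Because $\varphi_\ast$ has eigenvalue $1$, this produces an element $\alpha\in G$ such that $g_k\varphi(x)\alpha\neq\gamma x$ for every $x\in G$, every $\gamma\in N$ and every $k\in\{1,2,\ldots,n\}$. Since $G$ is connected and simply connected, I then choose a continuous path $t\mapsto\alpha_t$ in $G$ with $\alpha_0=1$ and $\alpha_1=\alpha$ (e.g.\ $\alpha_t=\exp(t\log\alpha)$) and define
\[
\bar f^\ast_t\colon G\to G^n,\qquad \bar f^\ast_t(x)=\bigl(g_1\varphi(x)\alpha_t,\,g_2\varphi(x)\alpha_t,\ldots,g_n\varphi(x)\alpha_t\bigr).
\]

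Next I verify that each $\bar f^\ast_t$ actually lands in $F_n(G,N)$ and descends to an $n$-valued map on $\nilk{N}{G}$. For the first point, right multiplication by $\alpha_t$ commutes with the left action of $N$, so $N\cdot g_k\varphi(x)\alpha_t=N\cdot g_l\varphi(x)\alpha_t$ would give $N\cdot g_k\varphi(x)=N\cdot g_l\varphi(x)$, contradicting the fact that the original $\bar f^\ast=\bar f^\ast_0$ is a lift. For the second point, inserting $\alpha_t$ on the right preserves the twisting identity: for any $\gamma\in N$,
\[
\bar f^\ast_{t,i}(\gamma x)=\bar f^\ast_i(\gamma x)\,\alpha_t=\phi_i(\gamma)\bar f^\ast_{\sigma_\gamma^{-1}(i)}(x)\,\alpha_t=\phi_i(\gamma)\,\bar f^\ast_{t,\sigma_\gamma^{-1}(i)}(x),
\]
so $\bar f^\ast_t\circ\gamma=\psi_f(\gamma)\circ\bar f^\ast_t$ and $\bar f^\ast_t$ is the basic lifting of a well defined $n$-valued map $f_t\colon\nilk{N}{G}\to D_n\!\left(\nilk{N}{G}\right)$. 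The assignment $t\mapsto f_t$ is continuous, and so $f=f_0\simeq f_1$.

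Finally, $f_1$ has no fixed points: a fixed point would lift to some $\tilde x\in G$ with $p\!\left(g_k\varphi(\tilde x)\alpha\right)=p(\tilde x)$ for some $k$, i.e.\ $g_k\varphi(\tilde x)\alpha=\gamma\tilde x$ for some $\gamma\in N$, which is exactly excluded by the choice of $\alpha$. Since $\Fix(f_1)=\emptyset$ and fixed point classes are a homotopy invariant, every class of $f$ is empty, has index $0$, and therefore $N(f)=0$. The only nontrivial ingredient is Lemma~\ref{alpha for eig 1} itself; once the right translation $\alpha$ is in hand, the homotopy and descent checks are routine, so no real obstacle remains.
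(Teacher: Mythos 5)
Your proposal is correct and follows essentially the same route as the paper: apply Lemma~\ref{alpha for eig 1} to obtain $\alpha$, deform the lifting by right multiplication with $\alpha^t$ (your $\exp(t\log\alpha)$ is exactly the paper's $\alpha^t$), check equivariance and descent, and conclude from the fixed-point-freeness of the deformed map. The only quibble is the phrase ``every class of $f$ is empty'' --- $f$ itself may well have fixed points; the correct conclusion is simply $N(f)=N(f_1)=0$ by homotopy invariance, which you also invoke.
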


\begin{proof}
    Applying Lemma \ref{alpha for eig 1}
    gives an element $\alpha \in G$ such that
    for all $x\in G$ and $\gamma \in N$ it holds that
    \begin{align}\label{no fixed points}
        g_1 \varphi(x) \alpha \neq \gamma x,
        g_2 \varphi(x) \alpha \neq \gamma x,
        \dots,
        g_n \varphi(x) \alpha \neq \gamma x.        
    \end{align}
    Define
    $$
        \bar g^* : G \to G^n:
        x \mapsto \big(g_1 \varphi(x)\alpha, g_2 \varphi(x)\alpha, \dots, g_n \varphi(x)\alpha \big).
    $$
    As we know that $\bar f^\ast(x) = \big(g_1 \varphi(x), g_2 \varphi(x), \dots, g_n \varphi(x) \big)\in F_n(G,N)$, it easily follows that also $\bar g^\ast (x)\in F_n(G,N)$, so {we} will view $\bar g^\ast$ as a map 
    from $G$ to $F_n(G,N)$.
    
    Let $\gamma \in N$ and $x \in X.$ Using (\ref{f* na gamma}),
    we get 
    \begin{align*}
            \bar g^*( \gamma x)
            &= \big(\bar f^*_1 (\gamma x)\alpha, \bar f^*_2(\gamma x)\alpha, \dots, \bar f^*_n (\gamma x)\alpha \big)
            \\
            &= \big(\phi_1(\gamma) \bar f^*_{\sigma_\gamma^{-1}(1)}(x)\alpha,
            \phi_2(\gamma) \bar f^*_{\sigma_\gamma^{-1}(2)}(x)\alpha,
            \dots,
            \phi_n(\gamma) \bar f^*_{\sigma_\gamma^{-1}(n)}(x)\alpha \big)
            \\
            &= \psi_f(\gamma) \bar g^*(x).
        \end{align*}
    It follows that $\bar g^*$ is the basic lifting of a well defined $n$-valued map
    $g: \nilk{N}{G} \to D_n\left(\nilk{N}{G}\right).$
    Because of (\ref{no fixed points}), the map $g$ has no fixed points.
    The maps $\bar f^*$ and $\bar g^*$ are homotopic by
            $$ \bar F^*: G\times [0,1]\to F_n(G,N) :(x,t) \mapsto  \big(g_1 \varphi(x)\alpha^t, g_2 \varphi(x)\alpha^t, \dots, g_n \varphi(x)\alpha^t \big). $$
    This homotopy induces a homotopy between $f$ and $g$. As $g$ has no fixed points, $N(g)=0$ and hence also $N(f)=0.$
\end{proof}

\begin{remark}
For the $n$-valued affine map $f$ from the previous theorem we have that 
\[R(f)=  \sum_{i=1}^n \Big|\det  \Big(I - \varphi_{*}\Big)\Big|_\infty = n  \Big|\det  \Big(I - \varphi_{*}\Big)\Big|_\infty =\infty.\] 
So $f$ has infinitely many fixed point classes, all of index 0.
\end{remark}

\subsection{The Nielsen number in case $\varphi_{\ast}$ does not have eigenvalue 1}

In this case we want to show that each fixed point class exists of a single fixed point and that the index is non-zero.     
 
\begin{lemma}
Let $f: \nilk{N}{G} \to D_n\left(\nilk{N}{G}\right)$
    be an irreducible affine $n-$valued map
    with basic lifting
    $\bar{f}^* = (g_1 \varphi, g_2 \varphi, \dots, g_n \varphi),$
    where $g_1, g_2, \dots, g_n \in G$ and $\varphi \in \text{End}(G).$
    If $\varphi_*$ does not have eigenvalue 1, each fixed point class of $f$ consists of a single point.
\end{lemma}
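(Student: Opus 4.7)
The plan is to apply Lemma \ref{psi bijective} directly, without any further machinery. Recall that an arbitrary lift-factor of $f$ has the form $\alpha \bar f^*_i$ for some $\alpha\in N$ and some $i\in\{1,2,\ldots,n\}$, and a fixed point class of $f$ has the form $p\Fix(\alpha \bar f^*_i)$. Using the given expression $\bar f^*_i = g_i\varphi$, a point $x\in G$ lies in $\Fix(\alpha \bar f^*_i)$ if and only if $\alpha g_i\varphi(x)=x$, which I rewrite as
\[
\varphi(x)\,x^{-1} \;=\; g_i^{-1}\alpha^{-1}.
\]

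Since by hypothesis $\varphi_*$ does not have $1$ as an eigenvalue, Lemma \ref{psi bijective} applies and tells us that the map $\psi:G\to G:x\mapsto \varphi(x)x^{-1}$ is a bijection. In particular, the displayed equation has exactly one solution $x_0\in G$, so $\Fix(\alpha\bar f^*_i)=\{x_0\}$ and therefore the fixed point class $p\Fix(\alpha\bar f^*_i)=\{p(x_0)\}$ consists of a single point of $\nilk{N}{G}$, as required.

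There is no real obstacle to the proof, since all the work is hidden inside Lemma \ref{psi bijective} (whose inductive argument on nilpotency class handles the non-abelian part). It is worth observing as a bonus that the \emph{surjectivity} of $\psi$ also shows that every fixed point class is non-empty in this regime, which is exactly what one needs in order to establish $N(f)=R(f)$ in the sequel by combining this single-point-per-class result with a non-vanishing index computation.
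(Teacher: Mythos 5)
Your proof is correct and is essentially identical to the paper's: both reduce membership in $\Fix(\alpha g_i\varphi)$ to the equation $\varphi(x)x^{-1}=g_i^{-1}\alpha^{-1}$ and invoke the bijectivity of $x\mapsto\varphi(x)x^{-1}$ from Lemma \ref{psi bijective}. Nothing is missing.
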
   

\begin{proof}
   Any lift{-}factor of $f$ is of the form $\gamma g_i \varphi$ for some $\gamma \in N$ and some $i=1,2,\ldots, n$. The corresponding fixed point class is then $p \Fix(\gamma g_i \varphi)$. It is enough to show that $\Fix(\gamma g_i \varphi)$  is a singleton. Note that $x\in \Fix(\gamma g_i \varphi)$ if and only if $\varphi(x)x^{-1}= g_i^{-1} \gamma^{-1}$. By Lemma~\ref{psi bijective}, there is a unique such $x\in G$.   
\end{proof}

We now prove that the index of such a fixed point class is never 0.

\begin{lemma}
Let $f: \nilk{N}{G} \to D_n\left(\nilk{N}{G}\right)$
    be an irreducible affine $n-$valued map
    with basic lifting
    $\bar{f}^* = (g_1 \varphi, g_2 \varphi, \dots, g_n \varphi),$
    where $g_1, g_2, \dots, g_n \in G$ and $\varphi \in \text{End}(G).$
    If $\varphi_*$ does not have eigenvalue 1, {then} each fixed point class of $f$ has index $\pm 1$.
\end{lemma}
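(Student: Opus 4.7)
The strategy is to reduce the index computation in the $n$-valued setting to a classical local index computation for a single-valued map, and then to determine that local index explicitly using the group-theoretic form of the affine lift-factor.

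First, by the preceding lemma every fixed point class of $f$ is a singleton $\{p(x_0)\}$, where $x_0 \in G$ is the unique solution of $\gamma g_i \varphi(x_0) = x_0$ for the chosen lift-factor $\gamma g_i \varphi$. The definition of the index of a fixed point class of an $n$-valued map recalled from \cite{bdds} and \cite{s} agrees, on an isolated fixed point, with the ordinary local fixed point index of the single-valued lift-factor $h := \gamma g_i \varphi : G \to G$ at $x_0$, because near $p(x_0)$ the $n$-valued map $f$ splits into $n$ single-valued branches and $p$ is a local diffeomorphism, so we may compute the index downstairs. Hence it suffices to determine the local index of $h$ at $x_0$.

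Next, I would exploit the affine form of $h$. For every $y \in G$,
\[
h(x_0 y) = \gamma g_i \varphi(x_0 y) = \gamma g_i \varphi(x_0)\, \varphi(y) = x_0\, \varphi(y),
\]
using $h(x_0) = x_0$. Therefore left translation by $L_{x_0}$ conjugates $h$ on a neighbourhood of $x_0$ into the endomorphism $\varphi : G \to G$ on a neighbourhood of $1$. Since the local fixed point index is a diffeomorphism invariant, the local index of $h$ at $x_0$ equals the local index of $\varphi$ at $1 \in G$.

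Finally, the differential of $\varphi$ at $1$ is exactly $\varphi_* : \mathfrak{g} \to \mathfrak{g}$. Since $\varphi_*$ has no eigenvalue $1$ by hypothesis, $I - \varphi_*$ is invertible, so $1$ is a non-degenerate isolated fixed point of $\varphi$ and its index equals $\operatorname{sign}\det(I - \varphi_*) \in \{+1,-1\}$. The one point requiring care in this plan is the first step—making precise that the $n$-valued index of a singleton fixed point class coincides with the ordinary local index of the chosen lift-factor—but this is a direct consequence of the local splitting of an $n$-valued map at an isolated fixed point and is built into the framework of \cite{bdds}.
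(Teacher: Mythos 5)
Your argument is correct, and for the decisive computation it takes a different (and cleaner) route than the paper. Both proofs start the same way: the fixed point class attached to a lift-factor $h=\gamma g_i\varphi$ is the singleton $\{p(x_0)\}$ with $x_0$ the unique fixed point of $h$, and its index in Schirmer's sense equals the ordinary local index of $h$ at $x_0$ (the paper also simply cites this, via page 210 of \cite{s}). Where you diverge is in evaluating that local index. The paper works in Malcev coordinates: it identifies $G$ with $\R^K$, writes $h$ as a polynomial map whose Jacobian at $x_0$ is block lower triangular with the same diagonal blocks $M_i$ as the matrix of $\varphi_\ast$, and then invokes the sign-of-determinant formula from \cite{j}. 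You instead observe that $h(x_0y)=x_0\varphi(y)$, so $L_{x_0}^{-1}\circ h\circ L_{x_0}=\varphi$ and the conjugation invariance of the local index gives $\operatorname{ind}(h,x_0)=\operatorname{ind}(\varphi,1)=\operatorname{sign}\det(I-\varphi_\ast)$. This avoids coordinates and the polynomial structure entirely, and it makes the subsequent remark of the paper (that all fixed point classes of $f$ carry the same sign, independent of $\gamma$ and $g_i$) immediate, since the answer visibly depends only on $\varphi_\ast$. What the paper's coordinate computation buys in exchange is a self-contained verification, within the framework it has already set up in Theorem~\ref{veel veeltermen}, that the relevant Jacobian is nonsingular; your version instead leans on the standard commutativity property of the local fixed point index under topological conjugation, which is legitimate but worth citing explicitly.
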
 
\begin{proof}
As in the previous lemma, a fixed point class is determined by a lift-factor, which is of the form $\gamma g_i \varphi$. The index of the corresponding fixed point class of $f$ is the same as the index of the unique fixed point of the map $\gamma g_i \varphi: G \to G,$ as defined by Schirmer on page 210 of \cite{s}.
As in Section~\ref{sectie3}, using Malcev coordinates, we identify $G$ with $\R^K$ (where $K=k_1+k_2+\cdots+k_c$) via 
$G \to \R^K: a_1^{x_1} a_2^{x_2} \ldots a_c^{x_c} \mapsto (x_1,x_2, \ldots , x_c)$, where $x_i \in \R^{k_i}$. Recall that the Malcev coordinates also gave rise to isomorphisms $\gamma_i(G)/\gamma_{i+1}(G) \to \R^{k_i}$ and the induced endomorphism by $\varphi$ on
$\gamma_i(G)/\gamma_{i+1}(G)$ was represented by a matrix $M_i$. 
If we now let $\gamma g_i = a_1^{\alpha_1} a_2^{\alpha_2} \ldots a_c^{\alpha_c}$, it follows from Theorem~\ref{veel veeltermen} that we can view the map $\gamma g_i \varphi: G \to G$ as a map from $\R^K \to \R^K$ which maps 
$(x_1,x_2, \ldots, x_c)$ to 
\[ (\alpha_1+M_1x_1, \alpha_2 + M_2 x_2 + q_2(x_1), \ldots, \alpha_c + 
M_c x_c + q_c(x_1,x_2,\ldots,x_{c-1})),\]
where $q_i(x_1,x_2,\ldots, x_{i-1})$ is a polynomial depending on the variables $x_1, x_2, \ldots, x_{i-1}$ (and on $\alpha_1,\alpha_2, \ldots, \alpha_{i-1}$ and $M_1,M_2, \ldots, M_{i-1}$). Now let $x_0\in \R^K$ be the unique fixed point of this map. In order to show that the index of this fixed point is non-zero we can consider the Jacobian $J_{x_0}$ of this map at the point $x_0$. This Jacobian is of the form
\[    \begin{pmatrix}
    M_1         &0      & \hdots&       &  0
    \\ *       & M_2   &       &       &0
    \\ \vdots  & *     &\ddots &       &\vdots
    \\ \vdots  &       &       & \ddots&0
    \\ *       &*      &\hdots &   *   &M_c 
    \end{pmatrix}.\]
Note that also $\varphi_\ast$ is of this form, with exactly the same blocks $M_i$ on the diagonal (but other terms below the diagonal). As we assume that $1$ is not an eigenvalue of $\varphi_\ast$, we also have that $1$ is not an eigenvalue of $J_{x_0}$ and so we get that 
$$\det \Big( I - 
    \begin{pmatrix}
    M_1         &0      & \hdots&       &  0
    \\ *       & M_2   &       &       &0
    \\ \vdots  & *     &\ddots &       &\vdots
    \\ \vdots  &       &       & \ddots&0
    \\ *       &*      &\hdots &   *   &M_c 
    \end{pmatrix}
    \Big) \neq 0.$$
From page 12 of \cite{j},
it follows that the index equals the sign of the above determinant and hence equals $\pm 1$.
\end{proof}

\begin{remark}
    Note that in the lemma above we have that the $M_i$'s appearing on the block diagonal of the Jacobian are the same for all lift-factors. This means that the index of the fixed point classes are either all equal to 1 or all equal to -1.  
\end{remark}

\begin{theorem}\label{case 2}
    Let $f: \nilk{N}{G} \to D_n\left(\nilk{N}{G}\right)$
    be an irreducible affine $n-$valued map
    with basic lifting
    $\bar{f}^* = (g_1 \varphi, g_2 \varphi, \dots, g_n \varphi),$
    where $g_1, g_2, \dots, g_n \in G$ and $\varphi \in \text{End}(G).$
    If $\varphi_*$ does not have eigenvalue 1, then 
    $N(f)=  n  \Big|\det  \Big(I - \varphi_{*}\Big)\Big|.$
\end{theorem}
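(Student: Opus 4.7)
The plan is short because the heavy lifting has already been done by the two preceding lemmas together with Theorem~\ref{Formule Reidemeister}. First I would observe that since $f$ is irreducible, the basic lifting has exactly one $\sigma$--class, and by the lemma stating that lift-factors in the same $\sigma$--class share the same linear part, we have $\varphi_1 = \varphi_2 = \cdots = \varphi_n = \varphi$. Thus Theorem~\ref{Formule Reidemeister} specializes to
\[
R(f) = \sum_{i=1}^n \bigl|\det(I - \varphi_{i*})\bigr|_\infty = n\bigl|\det(I - \varphi_*)\bigr|_\infty.
\]
Because $\varphi_*$ has no eigenvalue $1$, we have $\det(I - \varphi_*) \neq 0$, and so this Reidemeister number is finite and equals $n|\det(I - \varphi_*)|$.

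Next I would invoke the two preceding lemmas: every fixed point class of $f$ consists of a single point (coming from the unique solution of $\varphi(x)x^{-1} = g_i^{-1}\gamma^{-1}$ provided by Lemma~\ref{psi bijective}), and every such fixed point class has index $\pm 1$. In particular, every fixed point class of $f$ is essential, so the number of essential fixed point classes coincides with the total number $R(f)$ of fixed point classes.

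Combining these two observations gives
\[
N(f) = R(f) = n\bigl|\det(I - \varphi_*)\bigr|,
\]
which is the desired formula. There is no real obstacle here: the only point that requires a moment of care is making sure that the Reidemeister formula from Theorem~\ref{Formule Reidemeister} applies in this simplified form, which is handled by the irreducibility of $f$ and by the fact that the sum $\sum_{i=1}^n |\det(I - \varphi_{i*})|_\infty$ in that theorem already absorbs the indices $[N:S_{i_k}]$, so no additional bookkeeping of the stabilizers $S_i$ is needed.
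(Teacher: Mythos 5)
Your proposal is correct and follows essentially the same route as the paper: compute $R(f) = n\bigl|\det(I - \varphi_{*})\bigr|_\infty$ via Theorem~\ref{Formule Reidemeister} (finite since $1$ is not an eigenvalue of $\varphi_*$), then use the two preceding lemmas to see that every fixed point class is a singleton of index $\pm 1$, hence essential, so $N(f) = R(f)$. The only cosmetic difference is that you re-derive the equality of the linear parts from irreducibility, whereas the theorem's hypothesis already builds a single $\varphi$ into the basic lifting.
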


\begin{proof}
    Indeed, there are \[R(f)=  \sum_{i=1}^n \Big|\det  \Big(I - \varphi_{*}\Big)\Big|_\infty = n  \Big|\det  \Big(I - \varphi_{*}\Big)\Big| \]
    fixed point classes, each of index $\pm 1$, so they are all essential. It follows that in this case $N(f)=R(f)=n  \Big|\det  \Big(I - \varphi_{*}\Big)\Big| $.
\end{proof}

\subsection{The general case}
We are now ready to prove the main result of this paper, which generalizes the well known formula of Anosov for computing the Nielsen number of single-valued maps on nilmanifolds. 
\begin{theorem}\label{hoofdformule}
    Let $f: \nilk{N}{G} \to D_n\left(\nilk{N}{G}\right)$
    be an affine $n-$valued map
    with basic lifting
    $\bar{f}^* = (g_1 \varphi_1, g_2 \varphi_2, \dots, g_n \varphi_n),$
    where $g_1, g_2, \dots, g_n \in G$ and $\varphi_1,\varphi_2, \ldots , \varphi_n \in \text{End}(G).$ Then 
    \[R(f)=  \sum_{i=1}^n   \Big|\det  \Big(I - \varphi_{i*}\Big)\Big|_\infty 
    \mbox{ and }
    N(f)=  \sum_{i=1}^n   \Big|\det  \Big(I - \varphi_{i*}\Big)\Big|.\]
    When $R(f)<\infty$, then $R(f)=N(f)$.
\end{theorem}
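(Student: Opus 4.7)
The formula for $R(f)$ is already given by Theorem~\ref{Formule Reidemeister}, so my plan is to concentrate on the Nielsen number formula and derive the final equality $R(f)=N(f)$ as a corollary. The whole strategy is a bookkeeping argument: decompose $f$ into irreducible components using Staecker's result (Proposition~\ref{bereken Nielsen}), apply Theorems~\ref{case 1} and \ref{case 2} to each component, and repackage the sum.

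First I would write $f = f_1 \cup f_2 \cup \cdots \cup f_r$ as a disjoint union of irreducible components of sizes $n_k$, with $n_1+n_2+\cdots+n_r=n$. Recall that the components correspond precisely to the $\sigma$-classes of the lift-factors, and by the lemma preceding Theorem~\ref{case 1} all lift-factors in a single $\sigma$-class share the same linear part. Hence each $f_k$ is an irreducible affine $n_k$-valued map whose basic lifting takes the form $(h_1\varphi^{(k)}, h_2\varphi^{(k)}, \ldots, h_{n_k}\varphi^{(k)})$ for a single endomorphism $\varphi^{(k)} \in \text{End}(G)$, and the index set $\{1,2,\ldots,n\}$ partitions into $r$ blocks of sizes $n_k$ on which $\varphi_i = \varphi^{(k)}$.

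Next I would apply Theorems~\ref{case 1} and \ref{case 2} componentwise. If $\varphi^{(k)}_*$ has eigenvalue $1$, then $N(f_k)=0$, matching $n_k |\det(I - \varphi^{(k)}_*)| = 0$; otherwise $N(f_k) = n_k|\det(I - \varphi^{(k)}_*)|$. Either way, $N(f_k) = n_k|\det(I - \varphi^{(k)}_*)|$. Summing via Proposition~\ref{bereken Nielsen} and regrouping each block of $n_k$ equal summands into the corresponding $n_k$ indices $i$ gives
\[
N(f) = \sum_{k=1}^r N(f_k) = \sum_{k=1}^r n_k |\det(I - \varphi^{(k)}_*)| = \sum_{i=1}^n |\det(I - \varphi_{i*})|,
\]
as claimed.

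Finally, for the last assertion: by Theorem~\ref{Formule Reidemeister}, $R(f)<\infty$ is equivalent to $|\det(I-\varphi_{i*})|_\infty<\infty$ for every $i$, i.e.\ no $\varphi_{i*}$ has eigenvalue $1$. Under this hypothesis $|\det(I-\varphi_{i*})|_\infty$ coincides with $|\det(I-\varphi_{i*})|$ termwise, so the two sums agree and $R(f)=N(f)$. There is no real obstacle in this proof, as all the substantive work has already been carried out in the preceding subsections; the only care required is matching the index conventions between the decomposition into $\sigma$-classes and the enumeration $i=1,\ldots,n$ of lift-factors.
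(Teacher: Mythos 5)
Your proposal is correct and follows essentially the same route as the paper: quote Theorem~\ref{Formule Reidemeister} for $R(f)$, reduce to irreducible components via Proposition~\ref{bereken Nielsen} and the fact that all lift-factors in a $\sigma$-class share the same linear part, apply Theorems~\ref{case 1} and~\ref{case 2}, and observe that $R(f)<\infty$ forces every $|\det(I-\varphi_{i*})|_\infty$ to equal $|\det(I-\varphi_{i*})|$. The only cosmetic difference is that you phrase the argument componentwise with explicit regrouping of the sum, while the paper simply reduces to the case $r=1$; the content is identical.
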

\begin{proof}
The formula for the Reidemeister number was already obtained in Theorem~\ref{Formule Reidemeister}, so we only need to prove the formula for the Nielsen number.\\
By decomposing $f$ into its irreducible components $f_1, f_2, \ldots, f_r$ and by using the fact that $N(f) = N(f_1) + N(f_2) + \cdots + N(f_r)$, it suffices to prove this theorem in case $f$ is irreducible.
Note that in this case we have that $\varphi_1=\varphi_2=\cdots=\varphi_n$.

In case $\varphi_{1\ast}$ has eigenvalue 1, we know by Theorem~\ref{case 1} that $N(f)=0$, but then we also have that $\sum_{i=1}^n   \Big|\det  \Big(I - \varphi_{i*}\Big)\Big|=0$ since each term is 0. In the other case, Theorem~\ref{case 2} tells us that $N(f)= n  \Big|\det  \Big(I - \varphi_{1*}\Big)\Big|$ but this also equals $\sum_{i=1}^n   \Big|\det  \Big(I - \varphi_{i*}\Big)\Big|$ since all $\varphi_i$ are the same. 

The claim that $R(f)<\infty\Rightarrow R(f)=N(f)$ is now obvious.
\end{proof}
We illustrate the obtained formulas with some examples.
Section 4 of \cite{bdds} is devoted to the computation
of the Reidemeister and Nielsen number of the $3-$valued map
\[ f: T^2 \multimap T^2: p(t,s) \mapsto \left\{ 
p(\frac{t}{2},-s), p (\frac{t+1}{2},-s), p(-t,-s+\frac{1}{2}) \right\}\]
on the torus $T^2 = \nilk{\Z^2}{\R^2} ,$
where $p:\R^2 \to T^2$ is the  universal covering space.
The chosen basic lifting of $f$ is $\bar{f}^* = (\bar{f}^*_1, \bar{f}^*_2,  \bar{f}^*_3):\R^2 \to F_3(\R^2, \Z^2 ),$
with the maps $\bar{f}^\ast_i:\R^2 \to \R^2$ ($i=1,2,3$) defined by 
\begin{align*}
\bar{f}^\ast_1 (t,s)&= (\frac{t}{2},-s),
\\
\bar{f}^\ast_2 (t,s)&= (\frac{t+1}{2},-s),
\\
\bar{f}^\ast_3 (t,s)&= (-t,-s+\frac{1}{2}).
\end{align*}
The map $f$ has six fixed points:
$$
 p(0,0) , \;   p(0,\frac{1}{2}) , \;
  p(0,\frac{1}{4}) , \;   p(0,\frac{3}{4}) , \;
 p(\frac{1}{2},\frac{1}{4}) , \;  p(\frac{1}{2},\frac{3}{4}) .
$$
In \cite{bdds} we already found that $R(f)=6$ (which was in fact done by using formula (\ref{Som Reidemeister Getallen})),
and we also argued that the Nielsen number turns out to be the same in this case: $N(f) = 6.$
Now we can calculate those numbers easily again by using the above theorem.
With the notation from Theorem \ref{hoofdformule},
we denote the affine lift-factors as
\begin{align*}
     \bar{f}^\ast_1  = \left( g_1= \begin{pmatrix}
           0 \\
           0
         \end{pmatrix}, \varphi_1=
         \begin{pmatrix}
           \frac{1}{2} & 0\\
           0 & -1
         \end{pmatrix} \right)
         , \;
         \bar{f}^\ast_2  = 
         \left( g_2= \begin{pmatrix}
           \frac{1}{2} \\
           0
         \end{pmatrix},
         \varphi_2=
         \begin{pmatrix}
           \frac{1}{2} & 0\\
           0 & -1
         \end{pmatrix} \right)
         \\ \text{ and }
         \bar{f}^\ast_3  = \left( g_3= \begin{pmatrix}
           0 \\
           \frac{1}{2}
         \end{pmatrix},
         \varphi_3=
         \begin{pmatrix}
           -1 & 0\\
           0 & -1
         \end{pmatrix} \right).
    \end{align*}
    Since $G = \R^2,$
    it holds that $\varphi_i = \varphi_{i*} $ for $i = 1,2,3$
    and thus we get
    \begin{align*}
    \sum_{i=1}^3   &\Big|\det  \Big(I - \varphi_{i*}\Big)\Big|_\infty
    \\&= \Big|\det  \Big(I - 
        \begin{pmatrix}
           \frac{1}{2} & 0\\
           0 & -1
        \end{pmatrix}
    \Big)\Big|_\infty
    +
    \Big|\det  \Big(I - 
        \begin{pmatrix}
           \frac{1}{2} & 0\\
           0 & -1
         \end{pmatrix}
    \Big)\Big|_\infty
    +
    \Big|\det  \Big(I - 
        \begin{pmatrix}
           -1 & 0\\
           0 & -1
         \end{pmatrix}
    \Big)\Big|_\infty.
    \end{align*}
This results in  $1+1+4=6,$ which is indeed equal to the Reidemeister number $R(f)$ as computed before.
Since $R(f) < \infty,$ Theorem \ref{hoofdformule} implies that $N(f) = R(f) = 6,$
which again is in accordance with the Nielsen number computed in \cite{bdds}.
A slight modification of the map $f$ in the third component
gives us another example.
Consider the $3-$valued map
\[ g: T^2 \multimap T^2: p(t,s) \mapsto \left\{ 
p(\frac{t}{2},-s), p (\frac{t+1}{2},-s), p(t,-s+\frac{1}{2}) \right\}\]
with basic lifting $\bar{g}^* = (\bar{g}^*_1, \bar{g}^*_2,  \bar{g}^*_3):\R^2 \to F_3(\R^2, \Z^2 ),$
where the maps $\bar{g}^\ast_i:\R^2 \to \R^2$ ($i=1,2,3$) are defined by 
\begin{align*}
\bar{g}^\ast_1 (t,s)&= (\frac{t}{2},-s),
\\
\bar{g}^\ast_2 (t,s)&= (\frac{t+1}{2},-s),
\\
\bar{g}^\ast_3 (t,s)&= (t,-s+\frac{1}{2}).
\end{align*}
The map $g$ has infinitely much fixed points:
$$
 p(0,0), \;  p(0,\frac{1}{2}) \text{ and }
  p(t,\frac{1}{4}),  \;  p(t,\frac{3}{4}) 
 \text{ for } t \in \R.
$$
For the map $g$ we have
\begin{align*}
\varphi_1=
         \begin{pmatrix}
           \frac{1}{2} & 0\\
           0 & -1
         \end{pmatrix}, 
         \varphi_2=
         \begin{pmatrix}
           \frac{1}{2} & 0\\
           0 & -1
         \end{pmatrix}
         \text{ and }
         \varphi_3=
         \begin{pmatrix}
           1 & 0\\
           0 & -1
         \end{pmatrix} .
    \end{align*}
We compute
\begin{align*}
    \sum_{i=1}^3   &\Big|\det  \Big(I - \varphi_{i*}\Big)\Big|_\infty
    \\&= \Big|\det  \Big(I - 
        \begin{pmatrix}
           \frac{1}{2} & 0\\
           0 & -1
        \end{pmatrix}
    \Big)\Big|_\infty
    +
    \Big|\det  \Big(I - 
        \begin{pmatrix}
           \frac{1}{2} & 0\\
           0 & -1
         \end{pmatrix}
    \Big)\Big|_\infty
    +
    \Big|\det  \Big(I - 
        \begin{pmatrix}
           1 & 0\\
           0 & -1
         \end{pmatrix}
    \Big)\Big|_\infty,
    \end{align*}
    which implies that  $R(g)= |1|_\infty+|1|_\infty+|0|_\infty = 1 + 1 + \infty =\infty$.
        For the Nielsen number on the other hand, we get
  $N(g) = |1|+|1|+|0| = 2.$ 
  Note that the essential fixed point classes are $\{ p(0,0)\}$ and $\{p(0,\frac{1}{2}) \} $.

\end{document}